\documentclass{amsart}
\usepackage{amsmath}
\usepackage{amssymb}
\usepackage{amsfonts}
\usepackage{float}
\usepackage{fancyhdr}
\usepackage[noadjust]{cite}

\setcounter{MaxMatrixCols}{10}

\newtheorem{theorem}{Theorem}[section]
\theoremstyle{plain}

\newtheorem{corollary}[theorem]{Corollary}
\newtheorem{lemma}[theorem]{Lemma}
\newtheorem{definition}[theorem]{Definition}
\newtheorem{proposition}[theorem]{Proposition}
\newtheorem{remark}[theorem]{Remark}
\numberwithin{equation}{section}
\input{tcilatex}
\pagestyle{fancy}
\fancyhf{}
\chead{\thepage}

\def\tbigcup{\mathop{\textstyle \bigcup }}
\def\tsum{\mathop{\textstyle \sum }}

\begin{document}
\title{Set-Direct Factorizations of Groups}
\author{Dan Levy}
\address[Dan Levy]{The School of Computer Sciences \\
The Academic College of Tel-Aviv-Yaffo \\
2 Rabenu Yeruham St.\\
Tel-Aviv 61083\\
Israel}
\email{danlevy@mta.ac.il}
\author{Attila Mar\'{o}ti}
\address[Attila Mar\'{o}ti]{Alfr\'{e}d R\'{e}nyi Institute of Mathematics,
Hungarian Academy of Sciences\\
Re\'{a}ltanoda utca 13-15\\
H-1053, Budapest\\
Hungary}
\email{maroti.attila@renyi.mta.hu}
\thanks{A.M. was supported by the National Research, Development and
Innovation Office (NKFIH) Grant No.~K115799 and Grant No.~ERC$\_$HU$\_$15
118286, and by the J\'anos Bolyai Research Scholarship of the Hungarian
Academy of Sciences. The work of A.M. on the project leading to this
application has received funding from the European Research Council (ERC)
under the European Union's Horizon 2020 research and innovation programme
(grant agreement No 741420). He also received funding from ERC 648017.}
\date{\today }
\subjclass[2000]{ 20K25, 20D40}
\keywords{direct factorizations of groups, central products}

\begin{abstract}
We consider factorizations $G=XY$ where $G$ is a general group, $X$ and $Y$
are normal subsets of $G$ and any $g\in G$ has a unique representation $g=xy$
with $x\in X$ and $y\in Y$. This definition coincides with the customary and
extensively studied definition of a direct product decomposition by subsets
of a finite abelian group. Our main result states that a group $G$ has such
a factorization if and only if $G$ is a central product of $\left\langle
X\right\rangle $ and $\left\langle Y\right\rangle $ and the central subgroup 
$\left\langle X\right\rangle \cap \left\langle Y\right\rangle $ satisfies
certain abelian factorization conditions. We analyze some special cases and
give examples. In particular, simple groups have no non-trivial set-direct
factorization.
\end{abstract}

\maketitle

\section{Introduction}

Factorizations of groups is an important topic in group theory that has many
facets. The most basic and best studied type of factorization is the direct
product factorization of a group into two normal subgroups. If $G$ is a
group and $H$ and $K$ are two normal subgroups of $G$ then $G=H\times K$ if
and only if each $g\in G$ has a unique representation $g=hk$ with $h\in H$
and $k\in K$. One possibility to generalize this definition is to relax the
condition that both $H$ and $K$ are normal. This leads to the well-known
concept of a semi-direct product of subgroups (only one of the factors is
assumed to be normal) and also to the consideration of factorizations $G=HK$
where neither of the two subgroups $H$ and $K$ is normal, and even the
unique representation condition is not necessarily assumed. To get a glimpse
of the possibilities see the seminal classification result in \cite%
{LiebeckPraegerSaxl} of maximal decompositions $G=HK$ where $G$ is a finite
simple group and $H$ and $K$ are two maximal subgroups of $G$.

Another generalization arose from a geometry problem of Minkowski \cite%
{MinkowskiBook1896}. In 1938 Haj\'{o}s \cite{Hajos1938} reformulated this
problem as an equivalent factorization problem of a finite abelian group,
where the factors need not be subgroups. More precisely, if $G$ is an
abelian group written additively then a (set) factorization of $G$ is a
representation of $G$ in the form $G=H+K$ where $H$ and $K$ are subsets of $%
G $ and for each $g\in G$ there is a unique pair $h\in H$ and $k\in K$ such
that $g=h+k$. Four years later, Haj\'{o}s \cite{Hajos1942} solved
Minkowski's problem by solving the equivalent group factorization problem.
This initiated the investigation of set factorizations of abelian groups.
The interested reader is referred to the book \cite{SzaboSandsBook2009} by
Szab\'{o} and Sands for a comprehensive account of problems, techniques,
results and applications of this field.

In the present paper we consider set factorizations of a general group. For
abelian groups our definition coincides with the one described above. Some
specialized results which apply to our definition appeared in \cite%
{KoradiHermannSzabo2001}. Another related concept which is studied in the
literature, is the concept of a logarithmic signature. This concept found
use in the search for new cryptographic algorithms which are based on finite
non-abelian groups, and this application motivates the bulk of the available
results. A logarithmic signature of a group $G$ is a sequence $\left[ \alpha
_{1},...,\alpha _{s}\right] $ of ordered subsets $\alpha _{i}\subseteq G$
(not necessarily normal) such that each $g\in G$ has a unique representation 
$g=g_{1}\cdots g_{s}$ with $g_{i}\in \alpha _{i}$ for all $1\leq i\leq s$.
The first proposal for a cryptosystem which is based on logarithmic
signatures can be found in \cite{Mag86}. Results on short logarithmic
signatures can be found in \cite{LempkenTran2005}.

\begin{definition}
\label{Def_DirectProductOfSubsets}Let $G$ be a group, and let $X$ and $Y$ be
two non-empty subsets of $G$. We shall say that the setwise product $XY$ is
direct, and denote this fact by writing $X\times Y$ for the set $XY$, if
both $X$ and $Y$ are normal in $G$, and if every $g\in XY$ has a unique
representation $g=xy$ with $x\in X$ and $y\in Y$.
\end{definition}

We shall say that the group $G$ has a \textit{set-direct factorization} (%
\textit{decomposition}) or is a \textit{set-direct product}, if $G=X\times Y$
for some $X,Y\subseteq G$. A set-direct factorization of $G$ will be called
non-trivial, if and only if none of $X$ or $Y$ is a singleton consisting of
a central element (whereby the second factor must be $G$). Furthermore,
following \cite[p.5]{SzaboSandsBook2009}, we shall say that a subset $X$ of
a group $G$ is \textit{normalized} if $1_{G}\in X$, and that the set-direct
factorization $G=X\times Y$ is \textit{normalized} if both $X$ and $Y$ are
normalized. We shall show (Remark \ref{Rem_Normalization}) that $Z\left(
G\right) \times Z\left( G\right) $ acts on the set of all direct
factorizations of $G$ and that each orbit of this action contains at least
one normalized factorization.

Our main result is a necessary and sufficient condition for a group $G$ and
an unordered pair $X,Y$ of normal subsets of $G$ to satisfy $G=X\times Y$.
This condition involves a certain central subgroup $Z$ of $G$, and a family
of set-direct factorizations of $Z$. Recall (see Section \ref%
{Subsect_CentralProducts}) that a group $G$ is a central product of two
subgroups $M$ and $N$ if $G=MN$, and $M$ and $N$ centralize each other. In
this case $M$ and $N$ are normal in $G$ and $Z:=M\cap N$ is central in $G$.
We write $G=M\circ _{Z}N$.

\begin{theorem}
\label{Th_MainNecessaryAndSufficient}Let $G$ be a group and let $X,Y$ be
normal subsets of $G$. Set $M=\left\langle X\right\rangle $, $N=\left\langle
Y\right\rangle $, $Z:=M\cap N$. For every $m\in M$ set $X_{m}:=\left(
m^{-1}X\right) \cap Z$ and for every $n\in N$ set $Y_{n}:=\left(
n^{-1}Y\right) \cap Z$. Then $G=X\times Y$ if and only if the following two
conditions are met.

\begin{enumerate}
\item[(a)] $G=M\circ _{Z}N$.

\item[(b)] $Z=X_{m}\times Y_{n}$ for every $m\in M$ and $n\in N$.
\end{enumerate}
\end{theorem}

\begin{corollary}
Simple groups have no non-trivial set-direct factorization.
\end{corollary}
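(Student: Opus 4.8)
The plan is to invoke Theorem~\ref{Th_MainNecessaryAndSufficient} directly. Suppose $G$ is simple and admits a set-direct factorization $G=X\times Y$; by Definition~\ref{Def_DirectProductOfSubsets} this already means that $X$ and $Y$ are normal subsets of $G$. First I would observe that $M=\left\langle X\right\rangle $ and $N=\left\langle Y\right\rangle $ are \emph{normal subgroups} of $G$: since $X$ is a normal subset, conjugation by any $g\in G$ permutes $X$ and hence stabilizes the subgroup it generates, and likewise for $Y$. Simplicity of $G$ then forces each of $M$ and $N$ to be either $1$ or $G$, which reduces the whole problem to three cases.

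Next I would dispose of the degenerate cases. If $M=1$, then $X=\left\langle X\right\rangle =\{1\}$ because $X$ is non-empty, so $X$ is a singleton consisting of the central element $1$, and the factorization is trivial by definition; the case $N=1$ is symmetric. Thus a \emph{non-trivial} factorization would be forced into the remaining case $M=N=G$, and it is this case that must be ruled out.

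To handle $M=N=G$, I would appeal to condition (a) of Theorem~\ref{Th_MainNecessaryAndSufficient}, which asserts $G=M\circ _{Z}N$ with $Z=M\cap N$ central in $G$. Here $Z=M\cap N=G$, so $G$ is central in itself, i.e.\ abelian; a simple abelian group is cyclic of prime order, say $G\cong \mathbb{Z}/p\mathbb{Z}$. The one point requiring genuine thought—the main obstacle, though a mild one—is that the structural conclusion $Z=G$ is now vacuous and does not by itself yield triviality, so I would supply a separate counting argument. Since the multiplication map $(x,y)\mapsto xy$ is a bijection from $X\times Y$ onto $G$, we get $\left\vert X\right\vert \left\vert Y\right\vert =\left\vert G\right\vert =p$. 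Primality forces one factor, say $X$, to be a singleton; as $G$ is abelian its unique element is central, so the factorization is again trivial (and the complementary factor equals $G$). This exhausts all cases and shows that every set-direct factorization of a simple group is trivial.
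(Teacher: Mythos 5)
Your proposal is correct and follows exactly the route the paper intends: the corollary is stated without proof immediately after Theorem~\ref{Th_MainNecessaryAndSufficient}, and the expected derivation is precisely your case analysis on the normal subgroups $M=\left\langle X\right\rangle$ and $N=\left\langle Y\right\rangle$, using condition (a) to rule out $M=N=G$ for non-abelian $G$. Your extra counting argument for the residual abelian case $G\cong \mathbb{Z}/p\mathbb{Z}$ is a genuine (if mild) point that the paper glosses over, and you handle it correctly via $\left\vert X\right\vert \left\vert Y\right\vert =p$.
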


Condition (b) of Theorem \ref{Th_MainNecessaryAndSufficient} specifies a
family of set-direct factorizations of $Z$, and the next definition
characterizes the families of set-direct factorizations of $Z$ that arise in
this way. Recall that if $H$ is an abelian group, and $S\subseteq H$ then
the kernel of $S$ in $H$ is defined by $K_{H}\left( S\right) :=\left\{ h\in
H|hS=S\right\} $ (we also write $K\left( S\right) $ if $H$ is clear from the
context). One can easily check that $K_{H}\left( S\right) $ is a subgroup of 
$H$.

\begin{definition}
\label{Def_FactorizationSystem}Let $Z$ be an abelian group and let $\mathcal{%
M}=\left\{ M_{i}\right\} _{i\in I}$ and $\mathcal{N}=\left\{ N_{j}\right\}
_{j\in J}$ be two multisets (i.e., repetitions allowed) of subgroups of $Z$.
An $\mathcal{MN}$-direct factorization system of $Z$ is a pair of multisets $%
\mathcal{A}=\left\{ A_{i}\right\} _{i\in I}$ and $\mathcal{B}=\left\{
B_{j}\right\} _{j\in J}$ of subsets of $Z$ which satisfy for all $i\in I$
and $j\in J$ 
\begin{equation*}
Z=A_{i}\times B_{j}\text{, }M_{i}\leq K_{Z}\left( A_{i}\right) \text{, }%
N_{j}\leq K_{Z}\left( B_{j}\right) \text{.}
\end{equation*}
\end{definition}

To make the connection between Condition (b) of Theorem \ref%
{Th_MainNecessaryAndSufficient} and Definition \ref{Def_FactorizationSystem}%
, we need the following fact (see Section \ref%
{SubSect_CentralMultiplicationAction}, Lemma \ref{Lem_ZAction}). Let $G$ be
a group, $N\trianglelefteq G$, and $Z\leq N$ a central subgroup of $G$. Then 
$Z$ acts by multiplication on the set $\Omega _{N}$ of all conjugacy classes
of $G$ contained in $N$.

\begin{theorem}
\label{Th_XxYAndFactorizationSystems}Let $G$ be a group and let $X,Y$ be
normal subsets of $G$. Set $M=\left\langle X\right\rangle $, $N=\left\langle
Y\right\rangle $, $Z:=M\cap N$, and assume that $G=M\circ _{Z}N$. For each $%
m\in M$ and $n\in N$ let $M_{m}$ and $N_{n}$ be, respectively, the
stabilizers of the conjugacy classes of $m$ and $n$ with respect to the
multiplication action of $Z$. Set $\mathcal{M}=\left\{ M_{m}\right\} _{m\in
M}$ and $\mathcal{N}=\left\{ N_{n}\right\} _{n\in N}$. Then $G=X\times Y$ if
and only if the pair of multisets $\left( \left\{ X_{m}\right\} _{m\in
M},\left\{ Y_{n}\right\} _{n\in N}\right) $ is an $\mathcal{MN}$-direct
factorization system of $Z$, where for every $m\in M$ and for every $n\in N$%
, we set $X_{m}:=\left( m^{-1}X\right) \cap Z$ and $Y_{n}:=\left(
n^{-1}Y\right) \cap Z$.
\end{theorem}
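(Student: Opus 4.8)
The plan is to reduce the whole statement to Theorem \ref{Th_MainNecessaryAndSufficient}, exploiting the standing hypothesis $G=M\circ_{Z}N$. By that theorem, since condition (a) is assumed, $G=X\times Y$ holds if and only if $Z=X_{m}\times Y_{n}$ for every $m\in M$ and $n\in N$, i.e. exactly condition (b). On the other hand, unwinding Definition \ref{Def_FactorizationSystem}, the pair $\left(\left\{X_{m}\right\}_{m\in M},\left\{Y_{n}\right\}_{n\in N}\right)$ is an $\mathcal{MN}$-direct factorization system of $Z$ precisely when, for all $m$ and $n$, we have $Z=X_{m}\times Y_{n}$ \emph{together with} the two kernel containments $M_{m}\leq K\left(X_{m}\right)$ and $N_{n}\leq K\left(Y_{n}\right)$. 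Thus the entire theorem comes down to showing that these kernel containments are automatic, that is, that they follow merely from $X$ and $Y$ being normal subsets and $Z$ being central, independently of whether $G=X\times Y$.

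The key step is therefore an unconditional lemma: for any normal subset $X$ and any $m\in M$ one has $M_{m}\leq K\left(X_{m}\right)$. First I would rewrite $X_{m}=\left(m^{-1}X\right)\cap Z=\left\{z'\in Z:mz'\in X\right\}$, and recall (Lemma \ref{Lem_ZAction}) that the stabilizer $M_{m}$ consists of those $z\in Z$ whose conjugacy class product $zm$ lies in the conjugacy class of $m$. Fix $z\in M_{m}$. Since $zm$ is conjugate to $m$ and $z$ is central, there exists $g\in G$ with $g^{-1}mg=zm$. Then for any $z'\in X_{m}$ we have $mz'\in X$, and because $X$ is conjugation-invariant, $g^{-1}\left(mz'\right)g=\left(g^{-1}mg\right)z'=\left(zm\right)z'=m\left(zz'\right)\in X$, where centrality of $z'$ is used to pull it outside the conjugation. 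Hence $zz'\in X_{m}$, so $zX_{m}\subseteq X_{m}$; applying the same to $z^{-1}\in M_{m}$ gives $zX_{m}=X_{m}$, i.e. $z\in K\left(X_{m}\right)$. The symmetric argument yields $N_{n}\leq K\left(Y_{n}\right)$.

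With the kernel containments in hand, both directions are immediate. If the pair is an $\mathcal{MN}$-direct factorization system, then in particular $Z=X_{m}\times Y_{n}$ for all $m,n$, so condition (b) of Theorem \ref{Th_MainNecessaryAndSufficient} holds and $G=X\times Y$. Conversely, if $G=X\times Y$, then condition (b) supplies $Z=X_{m}\times Y_{n}$ for all $m,n$, while the lemma supplies $M_{m}\leq K\left(X_{m}\right)$ and $N_{n}\leq K\left(Y_{n}\right)$; together these are exactly the requirements of Definition \ref{Def_FactorizationSystem}.

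The only genuine obstacle is the conjugation bookkeeping in the key lemma, specifically getting the direction of conjugacy right so that the element $g$ realizing $g^{-1}mg=zm$ carries $mz'$ to $m\left(zz'\right)$ rather than to some other translate; centrality of both $z$ and $z'$ is what makes the bookkeeping collapse cleanly. Everything else is a direct appeal to the main theorem and to the definition. I should also record that $M_{m}$ is genuinely a subgroup of $Z$, so that $z^{-1}\in M_{m}$ is available for the reverse inclusion; this is immediate since $M_{m}$ is a point stabilizer of a group action.
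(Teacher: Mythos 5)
Your proposal is correct and follows essentially the same route as the paper: the paper's proof likewise invokes Theorem \ref{Th_MainNecessaryAndSufficient} (condition (a) holding by hypothesis) to reduce everything to $Z=X_{m}\times Y_{n}$, and then observes that the kernel containments $M_{m}\leq K\left(X_{m}\right)$, $N_{n}\leq K\left(Y_{n}\right)$ are automatic. The only difference is that the paper cites Lemma \ref{Lem_ZAction}(5) for this last point, whereas you re-prove that fact from scratch with a direct elementwise conjugation computation (the paper's lemma argues via cosets and unions of classes in the orbit); your computation is sound, including the use of centrality of $z$ and $z'$ to get $g^{-1}\left(mz'\right)g=m\left(zz'\right)$.
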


The next theorem shows that starting from a central product $G=M\circ _{Z}N$
and an appropriate set-direct factorization system of $Z$, one can obtain a
set-direct decomposition of $G$. We denote by $O\left( \Omega _{M}\right) $
and $O\left( \Omega _{N}\right) $ the set of all \textit{orbits} of the
multiplication action of $Z$ on $\Omega _{M}$ and $\Omega _{N}$
respectively. Note (Lemma \ref{Lem_ZAction} part 3) that the stabilizer of
an orbit is the stabilizer of any conjugacy class belonging to the orbit.

\begin{theorem}
\label{Th_ConstructingXxYFromFacSys}Let $G=M\circ _{Z}N$. Set $I:=O\left(
\Omega _{M}\right) $ and $J:=O\left( \Omega _{N}\right) $. For each $i\in I$
and $j\in J$ let $M_{i}$ and $N_{j}$ be the stabilizers of the orbits $i$
and $j$. Set $\mathcal{M}:=\left\{ M_{i}\right\} _{i\in I}$ and $\mathcal{N}%
:=\left\{ N_{j}\right\} _{j\in J}$. Assume that $\mathcal{A}:=\left\{
A_{i}\right\} _{i\in I}$ and $\mathcal{B}:=\left\{ B_{j}\right\} _{j\in J}$
is an $\mathcal{MN}$-direct factorization system of $Z$. For each $i\in I$
and $j\in J$ fix conjugacy classes $C_{i}$ and $D_{j}$ belonging to the
orbits $i$ and $j$ respectively. Then $G=X\times Y$ is a set-direct
decomposition of $G$, where 
\begin{equation*}
X:=\tbigcup\limits_{i\in I}A_{i}C_{i}\text{ and }Y:=\tbigcup\limits_{j\in
J}B_{j}D_{j}\text{.}
\end{equation*}
\end{theorem}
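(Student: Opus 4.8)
The plan is to verify the two defining properties of a set-direct factorization: that $X$ and $Y$ are normal subsets of $G$, and that every $g\in G$ has a unique representation $g=xy$ with $x\in X$, $y\in Y$. Normality is immediate: since $Z\leq M$ and each $C_{i}$ is a conjugacy class contained in $M$, each $A_{i}C_{i}\subseteq M$, so $X\subseteq M$ and dually $Y\subseteq N$; moreover $A_{i}C_{i}=\bigcup_{a\in A_{i}}aC_{i}$ is a union of conjugacy classes of $G$ (central translates of $C_{i}$), so $X$ and $Y$ are unions of conjugacy classes and hence normal. I also record that the sets $A_{i}C_{i}$ (resp.\ $B_{j}D_{j}$) for distinct $i$ (resp.\ $j$) are pairwise disjoint, since they consist of conjugacy classes lying in distinct $Z$-orbits on $\Omega_{M}$ (resp.\ $\Omega_{N}$).

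The heart of the argument is a membership criterion phrased in terms of $Z$. Fix $g\in G$ and, using $G=M\circ_{Z}N$, fix one factorization $g=m_{0}n_{0}$ with $m_{0}\in M$, $n_{0}\in N$. Every other such factorization is $g=(m_{0}z)(z^{-1}n_{0})$ for a unique $z\in Z$, and since $X\subseteq M$ and $Y\subseteq N$, every factorization $g=xy$ with $x\in X$, $y\in Y$ arises this way. Let $i$ be the $Z$-orbit of $\mathrm{cl}(m_{0})$ and write $\mathrm{cl}(m_{0})=z_{0}C_{i}$; similarly let $j$ be the orbit of $\mathrm{cl}(n_{0})$ and write $\mathrm{cl}(n_{0})=w_{0}D_{j}$. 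Because $z$ is central, $\mathrm{cl}(m_{0}z)=zz_{0}C_{i}$, and the key claim is that $m_{0}z\in A_{i}C_{i}$ if and only if $zz_{0}\in A_{i}$. Here the kernel condition $M_{i}\leq K(A_{i})$ is used decisively: membership $m_{0}z\in A_{i}C_{i}$ is equivalent to $\mathrm{cl}(m_{0}z)=aC_{i}$ for some $a\in A_{i}$, i.e.\ to $zz_{0}\in A_{i}M_{i}$, and $M_{i}\leq K(A_{i})$ collapses $A_{i}M_{i}$ to $A_{i}$. The dual statement gives $z^{-1}n_{0}\in B_{j}D_{j}$ if and only if $z^{-1}w_{0}\in B_{j}$.

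With the criterion in hand, existence and uniqueness follow simultaneously from the single factorization $Z=A_{i}\times B_{j}$. Setting $a:=zz_{0}$ and $b:=z^{-1}w_{0}$, the two membership conditions read $a\in A_{i}$ and $b\in B_{j}$, while $ab=z_{0}w_{0}$ is a fixed element of $Z$ independent of $z$. Thus admissible values of $z$ correspond bijectively to representations $z_{0}w_{0}=ab$ with $a\in A_{i}$ and $b\in B_{j}$; the direct factorization $Z=A_{i}\times B_{j}$ provides exactly one such representation, yielding the unique $z$, hence the unique pair $x=m_{0}z$, $y=z^{-1}n_{0}$ with $g=xy$. I expect the main technical obstacle to be the bookkeeping in the membership criterion: correctly tracking how the central translate acts on conjugacy classes, verifying via Lemma \ref{Lem_ZAction} that the orbit $i$ of $\mathrm{cl}(m_{0}z)$ is independent of $z$ so that $m_{0}z$ can only land in $A_{i}C_{i}$ and in no other $A_{k}C_{k}$, and ensuring that the orbit-stabilizer structure and the kernel hypothesis of Definition \ref{Def_FactorizationSystem} interlock exactly as required.
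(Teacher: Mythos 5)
Your proof is correct, but it takes a genuinely different route from the paper's. The paper argues globally, orbit by orbit: it decomposes $A_{i}$ into cosets $a_{is}M_{i}$ and $B_{j}$ into cosets $b_{jt}N_{j}$, then uses the structure theory of central products (Lemma \ref{Lem_ConjClassesOfCentralProduct}, parts (3) and (4): $Z_{ij}=M_{i}N_{j}$ and the bijection $O\left( \Omega _{G}\right) \rightarrow O\left( \Omega _{M}\right) \times O\left( \Omega _{N}\right) $) to show that the products $a_{is}b_{jt}$ form a transversal of $Z_{ij}$ in $Z$, so that the $G$-classes $a_{is}b_{jt}C_{i}D_{j}$ are pairwise distinct and tile the product orbit $ij$; directness of $XY$ is then read off from this class-level tiling. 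You instead argue pointwise: you parameterize all factorizations of a fixed $g=m_{0}n_{0}$ over $M\times N$ by $z\in Z$, translate the membership conditions $m_{0}z\in X$ and $z^{-1}n_{0}\in Y$ into $zz_{0}\in A_{i}$ and $z^{-1}w_{0}\in B_{j}$ via the kernel hypotheses $M_{i}\leq K\left( A_{i}\right) $, $N_{j}\leq K\left( B_{j}\right) $ (the step $A_{i}M_{i}=A_{i}$, justified by Lemma \ref{Lem_ZAction} parts (2)--(3)), and observe that $\left( zz_{0}\right) \left( z^{-1}w_{0}\right) =z_{0}w_{0}$ is independent of $z$, so both existence and uniqueness of $z$ fall out of the single direct factorization $z_{0}w_{0}=ab$ in $Z=A_{i}\times B_{j}$. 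Your approach buys two things: it bypasses Lemma \ref{Lem_ConjClassesOfCentralProduct} entirely, and it makes element-level uniqueness fully explicit, whereas the paper's final step is terse on this point (its displayed argument gives class-level distinctness and coverage; uniqueness of $g=xy$ within a fixed class product implicitly needs $M_{i}\cap N_{j}=\left\{ 1\right\} $, cf.\ Lemma \ref{Lem_TrivialZiZjIntersect}, or the coset bookkeeping). What the paper's route buys in exchange is the explicit class-level picture of how $X\times Y$ tiles each $Z$-orbit by a transversal of $Z_{ij}$, which is conceptually reused elsewhere (e.g.\ in the proofs of Theorem \ref{Th_ZMZNCosetCondImpliesSetDirect} and Theorem \ref{Th_QuasiSimple}). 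Note also that your membership criterion amounts to computing $X_{m_{0}}=z_{0}^{-1}A_{i}$ and $Y_{n_{0}}=w_{0}^{-1}B_{j}$, so your argument could be compressed by verifying condition (b) of Theorem \ref{Th_MainNecessaryAndSufficient} and citing that theorem instead of redoing the uniqueness bookkeeping by hand.
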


Thus, the set-direct factorizations of a general group $G$ can be obtained,
in principle, from the knowledge of the central subgroups of $G$, the
central product decompositions of $G$ in which they are involved (each
central $Z\leq G$ is involved in at least one such decomposition - $G=G\circ
_{Z}Z$), the stabilizers of their multiplication action on the set of
conjugacy classes of $G$ contained in the factors of the central products,
and the associated factorization systems.

Now we consider two special cases of this general observation. The first
case is when one of the factors is a group (and then the second factor is a
normal transversal for this group). We shall say, given a group $G$, that $%
z\in Z\left( G\right) $ is \emph{semi-regular} if $zC\neq C$ for every
conjugacy class $C$ of $G$. In the following $k\left( G\right) $ denotes the
number of conjugacy classes of $G$.

\begin{theorem}
\label{Th_XhasNormalTransversal}Let $G$ be a group and let $X$ and $Y$ be
normal subsets of $G$. Suppose that $X$ is a group. Set $N:=\langle Y\rangle 
$ and $Z:=X\cap N$, and for each $n\in N$ set $Y_{n}:=\left( n^{-1}Y\right)
\cap Z$. Then the following two conditions are equivalent:

\begin{enumerate}
\item[(1)] $G=X\times Y$.

\item[(2)] $G=X\circ _{Z}N$ and $\left\vert Y_{n}\right\vert =1$ for all $%
n\in N$.
\end{enumerate}

Furthermore, if either of (1) or (2) holds, $Y$ is a normal transversal of $%
Z $ in $N$ and $Z$ acts semi-regularly on $\Omega _{N}$. If $N$ is finite
then $k\left( N\right) =k\left( Z\right) k\left( N/Z\right) $.
\end{theorem}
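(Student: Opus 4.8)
The plan is to derive the equivalence (1) $\Leftrightarrow$ (2) directly from Theorem \ref{Th_MainNecessaryAndSufficient}, and then to read off the three supplementary assertions from the description of $Y$ that this equivalence provides. Since $X$ is a subgroup we have $M=\langle X\rangle =X$, and because $Z=X\cap N\le X$ it follows that, for every $m\in X$, $m^{-1}X=X$ and hence
\[
X_{m}=\left( m^{-1}X\right) \cap Z=X\cap Z=Z .
\]
Thus $X_{m}=Z$ is forced for all $m$, independently of any factorization hypothesis. Substituting this into Condition (b) of Theorem \ref{Th_MainNecessaryAndSufficient} turns that condition into $Z=Z\times Y_{n}$ for every $n\in N$. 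I would then observe that, $Z$ being a group and $Y_{n}\subseteq Z$, the equality $Z=Z\times Y_{n}$ holds if and only if $\left\vert Y_{n}\right\vert =1$: if $Y_{n}=\{w\}$ the map $z\mapsto zw$ is a bijection of $Z$, yielding existence and uniqueness of representations; if $\left\vert Y_{n}\right\vert \ge 2$ the identity of $Z$ acquires two distinct representations; and if $Y_{n}=\emptyset$ the product is empty. Hence Conditions (a)--(b) of Theorem \ref{Th_MainNecessaryAndSufficient} are exactly Condition (2), establishing (1) $\Leftrightarrow$ (2).

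For the remaining assertions I may assume both (1) and (2) hold. For the normal transversal claim I would use the bijection $w\mapsto nw$ between $Y_{n}=\left( n^{-1}Y\right) \cap Z$ and $Y\cap nZ$, which gives $\left\vert Y\cap nZ\right\vert =\left\vert Y_{n}\right\vert =1$ for every $n\in N$. As $n$ ranges over $N$ the cosets $nZ$ exhaust the cosets of $Z$ in $N$, and $Y\subseteq N$, so $Y$ meets each coset in exactly one point; being normal by hypothesis, $Y$ is a normal transversal of $Z$ in $N$. Semi-regularity of the multiplication action of $Z$ on $\Omega_{N}$ is then immediate: by Theorem \ref{Th_XxYAndFactorizationSystems} the pair $\left( \{X_{m}\},\{Y_{n}\}\right) $ is an $\mathcal{MN}$-direct factorization system, so $N_{n}\le K\left( Y_{n}\right) $ for every $n$; since $Y_{n}$ is a singleton, $K\left( Y_{n}\right) =\{1\}$, whence every stabilizer $N_{n}$ is trivial.

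Finally I would deduce $k\left( N\right) =k\left( Z\right) k\left( N/Z\right) $ for finite $N$ from the semi-regular action. The standard device is the multiplication action of the central subgroup $Z$ on the set $\mathrm{Cl}\left( N\right) $ of conjugacy classes of the group $N$: the quotient map $a\mapsto aZ$ induces a bijection between the $Z$-orbits on $\mathrm{Cl}\left( N\right) $ and $\mathrm{Cl}\left( N/Z\right) $, so the number of orbits equals $k\left( N/Z\right) $. The one point needing care, which I expect to be the only genuine obstacle, is that the semi-regularity established above concerns $\Omega_{N}$, i.e.\ $G$-conjugacy classes inside $N$, whereas the counting is carried out with the finer partition into $N$-conjugacy classes. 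This dissolves once one notes that $N$-conjugacy refines $G$-conjugacy: if $za\sim_{N}a$ with $z\in Z$ then also $za\sim_{G}a$, so triviality of the $\Omega_{N}$-stabilizer forces $z=1$; hence $Z$ acts semi-regularly on $\mathrm{Cl}\left( N\right) $ as well. Every orbit then has size $\left\vert Z\right\vert =k\left( Z\right) $, and summing over the $k\left( N/Z\right) $ orbits yields $k\left( N\right) =k\left( Z\right) k\left( N/Z\right) $.
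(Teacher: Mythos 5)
Your proposal is correct and follows essentially the same route as the paper: the equivalence is obtained from Theorem \ref{Th_MainNecessaryAndSufficient} by observing that $M=X$ forces $X_{m}=Z$ so that condition (b) collapses to $\left\vert Y_{n}\right\vert =1$, and the transversal claim is the same coset-counting reformulation the paper uses. The only difference is cosmetic: where the paper delegates the supplementary assertions wholesale to Theorem \ref{Th_ZHasNormalTInN} (whose proof rests on Lemma \ref{Lem_ZAction} parts (5) and (6)), you inline exactly that content, deriving semi-regularity via Theorem \ref{Th_XxYAndFactorizationSystems} and reproving the orbit count, and you explicitly reconcile $N$-conjugacy classes with $G$-conjugacy classes in $N$ --- a sound extra precaution, though vacuous here, since $G=X\circ _{Z}N$ makes $X$ centralize $N$ and hence $n^{G}=n^{N}$ for every $n\in N$.
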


\begin{corollary}
\label{Coro_NAbelianWithTransversal}Let $G:=M\circ _{Z}N$ and suppose that $%
Z $ has a normal transversal $Y$ in $N$. Then $G=M\times Y$. In particular,
if $N$ is an abelian group, and $Y$ is any transversal of $Z$ in $N$, then $%
G=M\times Y$.
\end{corollary}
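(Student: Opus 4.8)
The plan is to establish the first assertion straight from Definition \ref{Def_DirectProductOfSubsets}, and then to obtain the ``in particular'' clause by verifying that, for abelian $N$, every transversal of $Z$ in $N$ is already a normal subset of $G$, so that the first assertion applies to it.

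For the first assertion I would check the three defining requirements of $G = M\times Y$ separately. Normality is immediate: $M$ is a normal subgroup of $G$ by the central product hypothesis, and $Y$ is a normal subset of $G$ because it is assumed to be a \emph{normal} transversal. For the set identity, centrality of $Z$ gives $N = ZY$, and $Z = M\cap N\le M$ gives $MZ = M$, so $G = MN = M(ZY) = (MZ)Y = MY$. For uniqueness, if $m_1y_1 = m_2y_2$ with $m_i\in M$ and $y_i\in Y$, then $m_2^{-1}m_1 = y_2y_1^{-1}\in M\cap N = Z$, so $y_1$ and $y_2$ lie in the same coset of $Z$ in $N$; since a transversal meets each such coset exactly once, $y_1 = y_2$ and then $m_1 = m_2$. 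This proves $G = M\times Y$. One could instead route this through Theorem \ref{Th_XhasNormalTransversal} with the group factor $X = M$, but matching its hypotheses forces one to replace $N$ by $\langle Y\rangle$ and track the resulting intersection with $M$, so the direct verification above is shorter.

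For the ``in particular'' clause the first assertion reduces the task to a single point: showing that an arbitrary transversal $Y$ of $Z$ in an abelian $N$ is normal in $G$. The decisive fact is that $N$ is then central in $G$. Indeed $N$ centralizes itself (being abelian) and centralizes $M$ (the factors of the central product commute), so it centralizes $MN = G$ and $N\le Z(G)$. Hence each element of $N$ forms a one-element $G$-conjugacy class, every subset of $N$ is a normal subset of $G$, and in particular any transversal $Y$ is a normal transversal; the first assertion now yields $G = M\times Y$.

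I expect the only real obstacle to be this normality of $Y$ in the abelian case. The set identity and the uniqueness of factorizations are formal and hold for \emph{any} transversal, normal or not, using only $Z = M\cap N$ and the transversal property; the abelian hypothesis enters solely to force $N\le Z(G)$, which is exactly what promotes an arbitrary transversal to a union of $G$-conjugacy classes. I would therefore single out the centrality $N\le Z(G)$ as the pivotal step, since without it a transversal of $Z$ in $N$ need not be normal in $G$, and the resulting factorization, though still a unique-representation factorization, would fail to be set-direct in the sense of Definition \ref{Def_DirectProductOfSubsets}.
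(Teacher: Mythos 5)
Your proof is correct, and it takes a genuinely more elementary route than the paper's. The paper deduces the corollary from Theorem \ref{Th_XhasNormalTransversal}: since that theorem requires the second factor to generate the group appearing in the central product, the paper must pass to $N_{1}:=\left\langle Y\right\rangle $ and $Z_{1}:=M\cap N_{1}$ (note $\left\langle Y\right\rangle $ can be a proper subgroup of $N$), show $G=M\circ _{Z_{1}}N_{1}$ via the same computation $G=MN=MZY=MY$ that you use, and then verify condition (2) there ($X_{m}=Z_{1}$ because $Z_{1}\leq M$, and $\left\vert Y_{n}\right\vert =1$ from the transversal property), so its conclusion ultimately rests on Theorem \ref{Th_MainNecessaryAndSufficient}. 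You instead verify Definition \ref{Def_DirectProductOfSubsets} bare-handed, and your three checks (normality, $G=MY$, and uniqueness via $m_{2}^{-1}m_{1}=y_{2}y_{1}^{-1}\in M\cap N=Z$) are all valid; this is shorter and sidesteps exactly the bookkeeping you flag, while the paper's detour buys uniformity with its general machinery and would also yield the extra conclusions of Theorem \ref{Th_XhasNormalTransversal} (e.g. semi-regularity of the action on conjugacy classes), which the corollary does not claim. In the abelian case you are in fact slightly more careful than the paper, which only remarks that any transversal is normal \emph{in} $N$ and leaves implicit the bridge to normality in $G$ that Definition \ref{Def_DirectProductOfSubsets} demands; your observation $N\leq Z\left( G\right) $ settles this outright. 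Two small caveats. First, your appeal to ``the factors of the central product commute'' is legitimate only because the paper's convention for $M\circ _{Z}N$ (following Gorenstein, Section \ref{Subsect_CentralProducts}) includes that $M$ centralizes $N$; this does \emph{not} follow from $G=MN$ with $M\cap N\leq Z\left( G\right) $ alone, as $Q_{8}=\left\langle i\right\rangle \left\langle j\right\rangle $ shows, so it is worth citing that convention explicitly. Second, in your first assertion you take the hypothesized normality of $Y$ to mean normality in $G$; if, as the paper's closing sentence suggests, ``normal transversal in $N$'' means normal in $N$, you should add the one-line bridge that a normal subset of $N$ is automatically normal in $G$ here, since conjugation by $mn\in G=MN$ acts on $N$ as conjugation by $n$ alone.
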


Here we prove Corollary \ref{Coro_NAbelianWithTransversal} by showing
directly how the conditions of Definition \ref{Def_DirectProductOfSubsets}
are fulfilled. In Section \ref{Section_SetDirFacOfGps} we give a second
proof of Corollary \ref{Coro_NAbelianWithTransversal} which relies on its
connection with Theorem \ref{Th_XhasNormalTransversal}.

\begin{proof}[Proof of Corollary \protect\ref{Coro_NAbelianWithTransversal}]

Since $G=MN$, $N=ZY$ and $Z\leq M$ we get $G=MZY=MY$. If $%
m_{1}y_{1}=m_{2}y_{2}$ for $m_{1},m_{2}\in M$ and $y_{1},y_{2}\in Y$ then $%
y_{1}y_{2}^{-1}\in M$ whereby $y_{1}y_{2}^{-1}\in M\cap N=Z$ and hence $%
y_{1}=y_{2}$ and $m_{1}=m_{2}$. This proves the unique factorization
property required in Definition \ref{Def_DirectProductOfSubsets}, and hence $%
G=M\times Y$.
\end{proof}

In the second special case we consider, a single set-direct factorization of
an abelian group $Z$ induces a set-direct factorization of $G=M\circ _{Z}N$.
Our non-standard commutator notation is explained in the first paragraph of
Section \ref{Sect_NotationBackground}.

\begin{theorem}
\label{Th_DPWithCyclicCenterNeceSuff} Let $G=M\circ _{Z}N$ be a group, and $%
Z=X_{0}\times Y_{0}$ a set-direct decomposition of $Z$. Assume that $\left[
M,M\right] \cap Z\subseteq K_{Z}\left( X_{0}\right) $ and $\left[ N,N\right]
\cap Z\subseteq K_{Z}\left( Y_{0}\right) $. Then $G$ has a set-direct
factorization $G=X\times Y$ with $X\subseteq M$, $Y\subseteq N$, $X\cap
Z=X_{0}$ and $Y\cap Z=Y_{0}$.
\end{theorem}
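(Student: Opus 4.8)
The plan is to deduce the theorem from the reduction already available in Theorem~\ref{Th_XxYAndFactorizationSystems} and Theorem~\ref{Th_ConstructingXxYFromFacSys}, which turn the existence of the sought factorization into a purely $Z$-internal question about factorization systems. Writing $I=O\left(\Omega_M\right)$ and $J=O\left(\Omega_N\right)$ for the orbit sets of the multiplication action, and letting $M_i$, $N_j$ denote the orbit stabilizers, a factorization $G=X\times Y$ with $X\subseteq M$, $Y\subseteq N$, $X\cap Z=X_0$, $Y\cap Z=Y_0$ corresponds to an $\mathcal{MN}$-direct factorization system $\left(\left\{A_i\right\}_{i\in I},\left\{B_j\right\}_{j\in J}\right)$ of $Z$ in which the factors attached to the orbit $i_0$ (resp. $j_0$) of the trivial class are exactly $X_0$ (resp. $Y_0$). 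Thus everything hinges on understanding, in terms of the commutator data, which subgroups of $Z$ occur as the stabilizers $M_i$ and $N_j$, and whether a single factorization of $Z$ can accommodate all of them at once.

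First I would record the one abelian fact that drives the forward implication: if $Z=X_0\times Y_0$ is set-direct, then $K\left(X_0\right)\cap K\left(Y_0\right)=\left\{1\right\}$. Indeed, writing the unique representation $1_Z=ab$ with $a\in X_0$, $b\in Y_0$, any $w\in K\left(X_0\right)\cap K\left(Y_0\right)$ yields $w=\left(wa\right)b=a\left(wb\right)$, two representations of $w$, whence $wa=a$ and $w=1$. Granting this, necessity of the condition is immediate from the standing hypotheses: since $\left[M,M\right]\cap\left[N,N\right]\subseteq M\cap N=Z$, the assumptions $\left[M,M\right]\cap Z\subseteq K\left(X_0\right)$ and $\left[N,N\right]\cap Z\subseteq K\left(Y_0\right)$ give $\left[M,M\right]\cap\left[N,N\right]\subseteq K\left(X_0\right)\cap K\left(Y_0\right)=\left\{1\right\}$. (The existence of $G=X\times Y$ in any case reproduces $Z=X_0\times Y_0$ as the $m=n=1$ instance of Theorem~\ref{Th_XxYAndFactorizationSystems}.)

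For sufficiency I would first pin down the stabilizers. Using the description of the multiplication action from Section~\ref{SubSect_CentralMultiplicationAction} together with the commutator identities of Section~\ref{Sect_NotationBackground}, the key point is that each $M_i$ is controlled by $\left[M,M\right]\cap Z$ and each $N_j$ by $\left[N,N\right]\cap Z$; concretely, the standing hypotheses are exactly what guarantees $M_i\subseteq K\left(X_0\right)$ for every $i\in I$ and $N_j\subseteq K\left(Y_0\right)$ for every $j\in J$. Once this is in hand the natural choice is the \emph{constant} system $A_i:=X_0$ for all $i$ and $B_j:=Y_0$ for all $j$. The three requirements of Definition~\ref{Def_FactorizationSystem} then hold simultaneously: $Z=A_i\times B_j=X_0\times Y_0$ for all $i,j$, together with $M_i\subseteq K\left(A_i\right)=K\left(X_0\right)$ and $N_j\subseteq K\left(B_j\right)=K\left(Y_0\right)$. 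Feeding this system into Theorem~\ref{Th_ConstructingXxYFromFacSys}, and choosing the representative $C_{i_0}=\left\{1_G\right\}$ of the orbit of the trivial class, produces $X=\bigcup_{i}X_0C_i\subseteq M$ and $Y=\bigcup_{j}Y_0D_j\subseteq N$ with $G=X\times Y$. The normalization $X\cap Z=X_0$, $Y\cap Z=Y_0$ follows because the only conjugacy classes meeting $Z$ are the central singletons, all of which lie in the orbit $i_0$; hence $X\cap Z=X_0C_{i_0}=X_0$, and the contribution $X_0C_i$ of any other orbit is disjoint from $Z$, and likewise for $Y$.

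The main obstacle is precisely the identification step inside sufficiency: establishing that the stabilizers $M_i$, $N_j$ are genuinely captured by $\left[M,M\right]\cap Z$ and $\left[N,N\right]\cap Z$, so that the standing hypotheses translate into the uniform containments $M_i\subseteq K\left(X_0\right)$ and $N_j\subseteq K\left(Y_0\right)$. This is where finiteness and, crucially, cyclicity of $Z$ enter: in a cyclic group the subgroup lattice is the divisor lattice, so $K\left(X_0\right)\cap K\left(Y_0\right)=\left\{1\right\}$ is equivalent to $\gcd\left(\left\vert K\left(X_0\right)\right\vert,\left\vert K\left(Y_0\right)\right\vert\right)=1$, and this coprimality is exactly what allows the kernel constraint coming from $M$ and the one coming from $N$ to be met at once by the single factorization $X_0\times Y_0$, rather than forcing incompatible factorizations orbit by orbit. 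The hypothesis $\left[M,M\right]\cap\left[N,N\right]=\left\{1_G\right\}$ is the basis-free way of recording this compatibility, and checking that it suffices — while, by the argument above, being unavoidable — completes the proof.
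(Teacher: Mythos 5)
Your proof is correct, but it takes a genuinely different route from the paper's in both directions, and in doing so it exposes a redundancy in the statement. For necessity, the paper proves a stronger fact that does not use the kernel hypotheses at all: from any factorization $G=X\times Y$ with $X\subseteq M$, $Y\subseteq N$ it sets $M_{1}:=\left\langle X\right\rangle $, $N_{1}:=\left\langle Y\right\rangle $, gets $\left[ M_{1},M_{1}\right] \cap \left[ N_{1},N_{1}\right] =\left\{ 1_{G}\right\} $ from Theorem \ref{Th_ZMZNCosetCondImpliesSetDirect}(1), and then uses Lemma \ref{Lem_InducedDPOfMAndN} to show $\left[ M,M\right] =\left[ X,X\right] \subseteq \left[ M_{1},M_{1}\right] $ (and similarly for $N$). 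You instead combine $\left[ M,M\right] \cap \left[ N,N\right] \subseteq Z$ (Lemma \ref{Lem_Z_M_and_Z_N}(a)) with your correct observation that $K_{Z}\left( X_{0}\right) \cap K_{Z}\left( Y_{0}\right) =\left\{ 1_{Z}\right\} $ for any set-direct $Z=X_{0}\times Y_{0}$ (an instance of Lemma \ref{Lem_TrivialZiZjIntersect}), so that $\left[ M,M\right] \cap \left[ N,N\right] =\left\{ 1_{G}\right\} $ already follows from the standing hypotheses, without the factorization; this is a legitimate (a fortiori) proof of the stated ``only if,'' and it shows that under the theorem's hypotheses the right-hand side of the equivalence is automatic, so only sufficiency carries content --- the paper's necessity argument proves more, namely that the commutator condition is forced by the factorization alone. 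For sufficiency, the paper routes through Corollary \ref{Coro_TrivialZMZNIntersectWithCyclicZ}, where cyclicity of $Z$ upgrades $Z_{\left[ M\right] }\cap Z_{\left[ N\right] }=\left\{ 1_{Z}\right\} $ to $\left\langle Z_{\left[ M\right] }\right\rangle \cap \left\langle Z_{\left[ N\right] }\right\rangle =\left\{ 1_{Z}\right\} $, and then through Theorem \ref{Th_ZMZNCosetCondImpliesSetDirect}(2); you inline exactly the construction hidden inside the latter: each orbit stabilizer satisfies $M_{i}\subseteq Z_{\left[ M\right] }=\left[ M,M\right] \cap Z\subseteq K_{Z}\left( X_{0}\right) $ by Lemma \ref{Lem_Z_M_and_Z_N}(b) (likewise $N_{j}\subseteq K_{Z}\left( Y_{0}\right) $), so the constant system $A_{i}=X_{0}$, $B_{j}=Y_{0}$ is an $\mathcal{MN}$-direct factorization system, and Theorem \ref{Th_ConstructingXxYFromFacSys} with $C_{i_{0}}=D_{j_{0}}=\left\{ 1_{G}\right\} $ gives $G=X\times Y$ with $X\cap Z=X_{0}$, $Y\cap Z=Y_{0}$; your disjointness argument that only the orbit of the central singletons contributes to $X\cap Z$ is sound. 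The one inaccuracy is your closing paragraph: you claim that cyclicity of $Z$ is ``crucially'' what makes the two kernel constraints compatible via the divisor lattice, but your own argument never uses cyclicity (nor finiteness, nor, strictly, the hypothesis $\left[ M,M\right] \cap \left[ N,N\right] =\left\{ 1_{G}\right\} $, which you showed is automatic); cyclicity matters only for the paper's intermediate Corollary \ref{Coro_TrivialZMZNIntersectWithCyclicZ}, which your route bypasses entirely. That paragraph is misleading commentary and should be deleted or corrected, but it does not affect the validity of the proof.
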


\begin{corollary}
\label{Coro_PrimePowerWithNoFixedPoints}Let $G$ be a group with a
non-trivial central element $z$ of prime power order which is not a prime.
Suppose that $z$ is semi-regular. Then $G$ has a non-trivial set-direct
factorization. Furthermore, if $G$ is also perfect, then $G$ has a
non-trivial normalized (see Remark \ref{Rem_Normalization}) set-direct
factorization such that none of the factors is a group.
\end{corollary}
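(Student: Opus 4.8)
The plan is to realise $G$ as the (essentially trivial) central product $G = G \circ_Z Z$ with $Z := \langle z\rangle$ and then feed an explicit factorization system into Theorem \ref{Th_ConstructingXxYFromFacSys}. Write the order of $z$ as $p^a$ with $a \ge 2$ (this is what ``prime power, not a prime'' means), so $Z$ is cyclic of order $p^a$ and central in $G$, and its unique maximal subgroup $\langle z^p\rangle$ has order $p^{a-1} \ge p$. Taking $M = G$ and $N = Z$ gives a legitimate central product, since $M \cap N = Z$ is central. I would first describe the two multiplication actions of $Z$ appearing in Theorem \ref{Th_ConstructingXxYFromFacSys}. On $\Omega_Z$ the action is just the regular action of $Z$ on itself (each element of the central $Z$ is its own conjugacy class), so $J := O(\Omega_Z)$ is a single orbit with trivial stabilizer $N_j = \{1\}$. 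On $\Omega_G$ the stabilizer of a class $C$ is a subgroup of $Z$ containing $z$ precisely when $z$ fixes $C$; since by hypothesis $z$ fixes no class, every such stabilizer is a proper subgroup of the cyclic group $Z$ and is therefore contained in $\langle z^p\rangle$. Hence every orbit stabilizer $M_i$, $i \in I := O(\Omega_G)$, satisfies $M_i \le \langle z^p\rangle$.

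The key construction is then the same factorization system for both assertions. For every $i \in I$ put $A_i := \langle z^p\rangle$, and for the single $j \in J$ put $B := \{1, z, \dots, z^{p-1}\}$. Since $B$ is a transversal of $\langle z^p\rangle$ in $Z$ we have $Z = \langle z^p\rangle \times B = A_i \times B$ for every $i$; moreover $K(A_i) = \langle z^p\rangle \supseteq M_i$ and $N_j = \{1\} \le K(B)$, so $(\{A_i\},\{B\})$ is an $\mathcal{MN}$-direct factorization system of $Z$ in the sense of Definition \ref{Def_FactorizationSystem}. Theorem \ref{Th_ConstructingXxYFromFacSys} now yields a set-direct decomposition $G = X \times Y$ where, choosing $D_j = \{1_G\}$ and $C_{i_0} = \{1_G\}$ for the orbit $i_0$ of the identity class, one gets $Y = B$ and $1_G \in \langle z^p\rangle \subseteq X$. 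This factorization is non-trivial: $Y = B$ has $p \ge 2$ elements, and $X$ cannot be a singleton, for the bijection $X \times Y \to G$ gives $|G| = |X|\,|Y|$, so $|X| = 1$ would force $|G| = p < p^a = |Z| \le |G|$. This proves the first assertion.

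For the second assertion assume in addition that $G$ is perfect. The factorization above is already normalized, as $1_G \in X \cap Y$ (cf. Remark \ref{Rem_Normalization}). The factor $Y = B$ is not a group, because $z \cdot z^{p-1} = z^p \notin B$ (here $a \ge 2$ is essential). It remains to see that $X$ is not a group either, and this is exactly where perfection enters. Suppose $X$ were a group; being also a normal subset, it would be a normal subgroup of $G$. As $Y = B \subseteq Z \le Z(G)$, we would have $G = XY \subseteq X\langle Y\rangle \subseteq G$ with $\langle Y\rangle$ central, whence $G/X \cong \langle Y\rangle / (\langle Y\rangle \cap X)$ is abelian. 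A perfect group has no non-trivial abelian quotient, so $X = G$, and then the directness of $G = X \times Y$ forces $|Y| = 1$, contradicting $|Y| = p$. Therefore neither factor is a group, completing the proof. The only point requiring genuine care is that a single global factor $B$ must serve all orbits simultaneously; this is precisely what dictates pushing every orbit stabilizer into $\langle z^p\rangle$ and taking each $A_i$ to be that one fixed subgroup.
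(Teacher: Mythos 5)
Your proof is correct, and the delicate points all check out: the bound $M_i\le \langle z^p\rangle$ for every orbit stabilizer is exactly right for a cyclic $p$-group (every proper subgroup lies in the unique maximal one, and $z\in Z_C$ would force $Z_C=Z$), the pair $\left( \{A_i\}_{i\in I},\{B\}\right)$ does satisfy Definition \ref{Def_FactorizationSystem}, and choosing $C_{i_0}=D_j=\{1_G\}$ legitimately places $1_G$ in both factors. Your route differs from the paper's in a mild but genuine way. The paper's proof verifies the hypotheses of Theorem \ref{Th_DPWithCyclicCenterNeceSuff}: it uses Lemma \ref{Lem_Z_M_and_Z_N}(b) to convert the no-fixed-class hypothesis into the commutator condition $Z_{[M]}=[M,M]\cap Z\subseteq X_0=K_Z(X_0)$, notes that $[N,N]\cap Z\subseteq K_Z(Y_0)$ holds trivially since $N=Z$ is abelian, and then cites that theorem (whose own proof runs through Corollary \ref{Coro_TrivialZMZNIntersectWithCyclicZ} and Theorem \ref{Th_ZMZNCosetCondImpliesSetDirect}). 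You instead apply Theorem \ref{Th_ConstructingXxYFromFacSys} directly, assembling the same seed data $X_0=\langle z^p\rangle$ and $Y_0=\{1,z,\dots,z^{p-1}\}$ into an $\mathcal{MN}$-direct factorization system by hand; this inlines and short-circuits the paper's chain, avoiding the commutator-subset machinery and the cyclic-$Z$ gcd argument at the small cost of checking the factorization-system axioms yourself, which is easy here because $N=Z$ yields a single orbit with trivial stabilizer. Unwinding the paper's citation of Theorem \ref{Th_DPWithCyclicCenterNeceSuff} lands on exactly your system, so the two arguments coincide at bottom: yours is the more economical and self-contained for this corollary, while the paper's exhibits it as an instance of the general cyclic-$Z$ criterion. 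Your perfectness argument also deviates slightly: you pass to the abelian quotient $G/X\cong \langle Y\rangle /\left( \langle Y\rangle \cap X\right)$ to force $X=G$ and then invoke directness (in effect Remark \ref{Rem_XIntersectY}) to get $|Y|=1$, whereas the paper observes $[G,G]=[X,X]$ since $Y$ is central, so $X\supseteq G'=G$, contradicting $Y\nsubseteq X$; both are sound, and your cardinality check $|G|=|X|\,|Y|$ for non-triviality even works without assuming $G$ finite.
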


Concrete examples of non-trivial set-direct factorizations of the type
described by Corollary \ref{Coro_PrimePowerWithNoFixedPoints}, where $G$ is
a non-abelian finite quasi-simple group, are provided by the following
theorem whose proof rests on the work of Blau in \cite{Blau1994}.

\begin{theorem}
\label{Th_QuasiSimple}Let $G$ be a finite quasi-simple group. Then $G$ has a
non-trivial normalized set-direct decomposition if and only if $G/Z\left(
G\right) \cong PSL\left( 3,4\right) $ and $8$ divides $\left\vert Z\left(
G\right) \right\vert $. Moreover, $G$ has a non-trivial normalized
set-direct decomposition such that none of the two factors is a group if and
only if $G/Z\left( G\right) \cong PSL\left( 3,4\right) $ and $16$ divides $%
\left\vert Z\left( G\right) \right\vert $.
\end{theorem}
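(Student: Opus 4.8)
The plan is to apply the earlier structural theorems, primarily Theorem~\ref{Th_DPWithCyclicCenterNeceSuff} and Corollary~\ref{Coro_PrimePowerWithNoFixedPoints}, and to feed in the detailed information about the Schur multipliers and covering groups of finite simple groups supplied by Blau's work \cite{Blau1994}. Since $G$ is quasi-simple, $G$ is perfect and $G/Z(G)$ is simple, with $Z(G)$ a quotient of the Schur multiplier of $G/Z(G)$. Thus the problem reduces to a finite case-analysis over the simple groups $S=G/Z(G)$ whose Schur multiplier $M(S)$ admits a quotient $Z(G)$ that is either non-cyclic or non-cyclic with more than four elements, since a cyclic center cannot support a non-trivial set-direct decomposition (by the simple-group corollary applied in the quasi-simple setting, a nontrivial factorization forces $Z(G)=\langle X\rangle \cap \langle Y\rangle$ to be non-trivial and to carry a non-trivial set-direct factorization of an abelian group, which a cyclic group of prime order cannot).

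\emph{First direction (sufficiency).} I would start from a quasi-simple $G$ with $S:=G/Z(G)\cong PSL(3,4)$ and $Z(G)$ non-cyclic. The Schur multiplier of $PSL(3,4)$ is $\mathbb{Z}/3 \times \mathbb{Z}/4 \times \mathbb{Z}/4$ (order $48$), which is exceptionally large among the $PSL$ families and is precisely what makes $PSL(3,4)$ the unique candidate; I would cite this value and the structure of the relevant covering groups from \cite{Blau1994}. Because $G$ is perfect, $G=[G,G]=\langle X\rangle\circ_Z\langle Y\rangle$ forces $[\langle X\rangle,\langle X\rangle]$ and $[\langle Y\rangle,\langle Y\rangle]$ to interact with $Z$ in a controlled way; the key input from Blau is information about which central elements fix conjugacy classes under the multiplication action of $Z$ on $\Omega_G$. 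The goal is to exhibit a central element $z$ of prime power (but not prime) order that fixes no conjugacy class, so that Corollary~\ref{Coro_PrimePowerWithNoFixedPoints} applies and yields a non-trivial normalized set-direct factorization, and moreover (in the perfect case) one in which neither factor is a group. For the stronger ``neither factor is a group'' conclusion I would use the $|Z(G)|>4$ hypothesis to guarantee enough room in $Z$ to choose the abelian factorization $Z=X_0\times Y_0$ with both $X_0$ and $Y_0$ nontrivial and neither equal to a subgroup, which translates via Theorem~\ref{Th_ConstructingXxYFromFacSys} into a factorization of $G$ with both factors genuinely non-subgroup subsets.

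\emph{Second direction (necessity).} Conversely, assuming $G$ has a non-trivial normalized set-direct factorization $G=X\times Y$, Theorem~\ref{Th_MainNecessaryAndSufficient} forces $G=M\circ_Z N$ with $Z=M\cap N$ central and $Z=X_m\times Y_n$ a genuine abelian set-direct factorization for all $m,n$; non-triviality forces $Z$ to be non-trivial and to carry a non-trivial abelian set-direct factorization. Here I would invoke the Haj\'{o}s-type theory for abelian groups (as in \cite{SzaboSandsBook2009}): a cyclic group of prime order, and more generally any group forced to be cyclic of prime or prime-squared order, is \emph{good} and admits no non-trivial set-direct factorization, which rules out all $S$ whose multiplier quotient $Z(G)$ must be small or cyclic. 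The content is then to scan the list of finite simple groups, use Blau's determination of the action of central elements on conjugacy classes, and show that outside $PSL(3,4)$ every quasi-simple $G$ either has cyclic center or has a center too constrained to admit the required non-trivial (respectively, non-subgroup) abelian factorization compatible with the commutator constraints $[M,M]\cap Z\subseteq K_Z(X_0)$ and $[N,N]\cap Z\subseteq K_Z(Y_0)$ of Theorem~\ref{Th_DPWithCyclicCenterNeceSuff}.

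The main obstacle, and where the real work of \cite{Blau1994} enters, is the fixed-point analysis of the multiplication action of $Z(G)$ on the conjugacy classes of $G$: deciding, for each quasi-simple $G$, whether some central element of suitable order fixes no class (giving existence) and whether the constraints permit a factorization in which neither factor is a subgroup. This is genuinely a property of the character table / class-multiplication structure of the covering groups, not a formal consequence of the central-product formalism, so the case-by-case verification that $PSL(3,4)$ is the \emph{unique} survivor — and the delicate counting of $|Z(G)|>4$ for the no-subgroup-factor refinement — is the heart of the argument. I would organize the write-up by first isolating the abelian-group-theoretic obstruction to reduce to ``$Z(G)$ non-cyclic,'' then quoting Blau to pin down the surviving simple group and the precise threshold on $|Z(G)|$.
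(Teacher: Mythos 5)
Your overall strategy (reduce via the central-product theorems, invoke Blau's classification, construct factorizations with Theorem \ref{Th_ConstructingXxYFromFacSys}) matches the paper's, but there are concrete gaps and one outright error in the necessity direction. First, your claim that a cyclic center is ruled out by Haj\'{o}s-type ``goodness'' is false: cyclic groups of composite order admit plenty of non-trivial set-direct factorizations (e.g.\ $C_{p^2}=\langle z^p\rangle\times\{1,z,\dots,z^{p-1}\}$), so abelian factorization theory alone eliminates nothing beyond prime order. The paper's actual mechanism is different and you never derive it: by Lemma \ref{Lem_NonTrivialQuasi} (which you also omit --- perfectness of $\langle X\rangle/Z$ and $\langle Y\rangle/Z$ plus simplicity of $G/Z(G)$ force one factor, say $Y$, to be \emph{central} with $\langle Y\rangle=Z$ and force the other factor to be a non-group), one has $Y_n=Y$ for all $n$, and then normalization ($1\in Y$) combined with Theorem \ref{Th_FacSysProperties}(a) shows every non-trivial $y\in Y$ lies outside every stabilizer $M_m$, i.e.\ is \emph{semi-regular}. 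Only then does Blau's theorem apply. Moreover, Blau's cases (i) and (ii) include groups you cannot dismiss abstractly: central elements of orders $2$ and $3$ always fix classes, so Equation \ref{Eq_LcmCondition} gives $6\mid|X_m|$, forcing $|Z|>12$ and leaving $G/Z(G)\cong \mathrm{PSU}(4,9)$ (whose multiplier $C_3\times C_3\times C_4$ permits \emph{non-cyclic} centers) as a surviving candidate; eliminating its two possible centers $C_3\times C_3\times C_2$ and $C_3\times C_3\times C_4$ requires explicit coset computations (partly via GAP \cite{GAP2017}) showing $|X_mY|<|Z|$ for every admissible $X_m$. You acknowledge this case analysis is ``the heart of the argument'' but neither identify the arithmetic reduction nor the $\mathrm{PSU}(4,9)$ elimination, so the necessity direction is not established.

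In the sufficiency direction your reliance on Corollary \ref{Coro_PrimePowerWithNoFixedPoints} fails exactly in the cases the theorem must cover: it needs a semi-regular central element of order $p^k$ with $k\geq 2$, but a non-cyclic $Z(G)$ for $\mathrm{PSL}(3,4)$ may be $C_2\times C_2$ or $C_3\times C_2\times C_2$, which contain no element of order $4$. The paper instead constructs the first-claim factorization directly by taking $Z\cong C_2\leq Z(G)$ (semi-regular by Blau's case (iii)), using trivial stabilizers and Theorem \ref{Th_ConstructingXxYFromFacSys} to get $G=X\times Z$; and for the second claim with $Z(G)\cong C_3\times C_2\times C_2$ it builds a bespoke non-subgroup transversal $Y=\{1,g_1g_2,g_1g_3,g_1g_2g_3\}$ of $\langle g_1\rangle$, using that every non-trivial class stabilizer equals $\langle g_1\rangle$ --- note here the central-side block $A_i=\langle g_1\rangle$ \emph{is} a subgroup of $Z$, contrary to your plan of choosing both $X_0$ and $Y_0$ non-subgroups; what matters is that the global factors are non-groups, with the non-central factor handled by Lemma \ref{Lem_NonTrivialQuasi} and the central one by $\langle Y\rangle=Z$. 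Finally, you never prove the converse of the second claim: when $|Z(G)|=4$, so $Z(G)\cong C_2\times C_2$, every normalized central factor of admissible size ($1$, $2$, or $4$) is automatically a subgroup, which is the short counting argument the paper supplies and your sketch leaves open.
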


\section{Notation and Background Results\label{Sect_NotationBackground}}

Let $G$ be any group. For $x\in G$ we denote the conjugacy class of $x$ in $%
G $ by $x^{G}$. For any normal subset $S$ of $G$ let $\Omega _{S}$ denote
the set of all conjugacy classes of $G$ contained in $S$. We set $k\left(
G\right) :=\left\vert \Omega _{G}\right\vert $ in the case that $G$ is
finite. For any two subsets $A$ and $B$ of $G$, we denote by $\left[ A,B%
\right] $ the set of all commutators $\left[ a,b\right] :=a^{-1}b^{-1}ab$
where $a$ and $b$ vary over all elements of $A$ and $B$ respectively. Note
that for subgroups $A$ and $B$ our notation differs from the common practice
to denote by $\left[ A,B\right] $ the subgroup generated by all $\left[ a,b%
\right] $ with $a\in A$ and $b\in B$. If $A=\left\{ a\right\} $ we may write 
$\left[ a,B\right] $ for $\left[ \left\{ a\right\} ,B\right] $ and similarly 
$\left[ A,b\right] :=\left[ A,\left\{ b\right\} \right] $.

\subsection{The action of a central subgroup on conjugacy classes\label%
{SubSect_CentralMultiplicationAction}}

In this subsection we summarize basic properties of the multiplication
action of a central subgroup $Z\leq G$ on the conjugacy classes of $G$.

\begin{lemma}
\label{Lem_ZAction}Let $G$ be a group, $N\trianglelefteq G$, and $Z\leq N$ a
central subgroup of $G$.
\end{lemma}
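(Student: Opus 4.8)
The plan is to verify three things in turn: that multiplication by $Z$ is a well-defined action on $\Omega _{N}$, that the point stabilizers admit an explicit description, and that this stabilizer is constant along each orbit (which is what part (3) asserts). Everything rests on the single observation that, because $Z$ is central, conjugation commutes with left multiplication by elements of $Z$; concretely $z^{g}=z$, so $\left( zx\right) ^{g}=z^{g}x^{g}=z\,x^{g}$ for all $g\in G$.

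First I would check that the action is well-defined. Fix $z\in Z$ and a conjugacy class $x^{G}\subseteq N$. The identity $z\,x^{g}=\left( zx\right) ^{g}$ just derived shows $z\,x^{G}=\left( zx\right) ^{G}$, so the image is again a single conjugacy class of $G$, not merely a normal subset. Moreover $zx\in N$ since $z\in Z\leq N$ and $x\in N$ (as $x^{G}\subseteq N$), and $N\trianglelefteq G$ forces $\left( zx\right) ^{G}\subseteq N$; hence $z\,x^{G}\in \Omega _{N}$. That $1_{G}$ acts trivially and $\left( z_{1}z_{2}\right) x^{G}=z_{1}\left( z_{2}x^{G}\right) $ is immediate from associativity in $G$, so this is a genuine $Z$-action on $\Omega _{N}$.

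Next I would compute the stabilizer of a class $x^{G}$. By definition $z$ stabilizes $x^{G}$ iff $\left( zx\right) ^{G}=x^{G}$, i.e. iff $zx$ is $G$-conjugate to $x$. Writing a generic conjugate as $x^{g}=x\left[ x,g\right] $, the equation $zx=x^{g}$ becomes $z=\left[ x,g\right] $, so the stabilizer equals $\left[ x,G\right] \cap Z=\left( x^{-1}x^{G}\right) \cap Z$ in the paper's commutator-set notation; the second form already previews the sets $X_{m}=\left( m^{-1}X\right) \cap Z$ appearing in the main theorems. That this intersection is a subgroup of $Z$ needs no separate verification: as the stabilizer of a point under a group action it is automatically a subgroup, even though $\left[ x,G\right] $ itself need not be.

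Finally, for part (3), I would fix an orbit and two of its members $x^{G}$ and $z_{0}\,x^{G}$ with $z_{0}\in Z$. Because $Z$ is abelian, $z\cdot \left( z_{0}\,x^{G}\right) =z_{0}\,x^{G}$ iff $z_{0}\left( z\,x^{G}\right) =z_{0}\,x^{G}$ iff $z\,x^{G}=x^{G}$; hence the two stabilizers coincide, and the common value may be attached to the orbit itself. I do not expect any genuine obstacle here—this is a basic lemma—so the only real subtlety is bookkeeping with the centrality hypothesis: it is precisely what makes $z\,x^{G}$ a class rather than a mere normal subset in the first step, and what collapses the usual statement that points of an orbit have conjugate stabilizers into actual equality of stabilizers in the last.
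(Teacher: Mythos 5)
What you do prove is correct, and for parts (1)--(4) it is essentially the paper's own argument: the identity $\left( zx\right) ^{g}=z\,x^{g}$ (centrality of $Z$) gives well-definedness of the action, the rewriting $zx=x^{g}\Leftrightarrow z=\left[ x,g\right] $ yields $Z_{D}=\left[ n,G\right] \cap Z$ and with it the criterion of part (2), and your cancellation argument for part (3) ($z\cdot \left( z_{0}\,x^{G}\right) =z_{0}\,x^{G}$ iff $z\,x^{G}=x^{G}$, using that $Z$ is abelian) is just a hands-on version of the paper's remark that stabilizers of points in one orbit are conjugate in $Z$, hence equal. Your observation that the stabilizer is automatically a subgroup, so no separate check is needed for $\left[ x,G\right] \cap Z$, is also fine.

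The genuine gap is one of coverage: the lemma has six parts, and your plan (``verify three things in turn'') stops after part (4), omitting parts (5) and (6) entirely. Part (5) --- if $Y\subseteq N$ is a normal subset of $G$, $D=n^{G}$, and $Y_{n}:=n^{-1}Y\cap Z$ is non-empty, then $Z_{D}\leq K\left( Y_{n}\right) $ --- is not a formal consequence of the stabilizer computation; the paper proves it by setting $\widetilde{Y}_{n}:=nY_{n}=Y\cap nZ$ and $U:=Y\cap DZ$, noting that normality of $Y$ makes $U$ a union of classes in the orbit of $D$, so $UZ_{D}=U$ by part (3), and then intersecting with the $Z_{D}$-invariant coset $nZ$ to conclude $\widetilde{Y}_{n}Z_{D}=\widetilde{Y}_{n}$, i.e.\ $Y_{n}Z_{D}=Y_{n}$. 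Part (6) --- for finite $G$, $k\left( G/Z\right) $ equals the number of $Z$-orbits on $\Omega _{G}$, and the action is semi-regular iff $k\left( G\right) =k\left( Z\right) k\left( G/Z\right) $ --- likewise requires a separate argument: one must check that $DZ$, read as a set of cosets, is a single conjugacy class of $G/Z$, that every class of $G/Z$ arises this way, and then count orbit lengths. Both omitted parts carry real weight downstream (part (5) is exactly what makes the factorization systems of Theorem \ref{Th_XxYAndFactorizationSystems} work, and part (6) underlies Theorem \ref{Th_ZHasNormalTInN} and Theorem \ref{Th_XhasNormalTransversal}), so as written your proposal establishes only four of the six claims.
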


\begin{enumerate}
\item $Z$ acts by multiplication on $\Omega _{N}$, namely, for any $z\in Z$
and $D\in \Omega _{N}$ we have $zD=Dz\in \Omega _{N}$. Denote this action by 
$\alpha $.

\item For any $D\in \Omega _{N}$ denote by $Z_{D}\leq Z$ the stabilizer of $%
D $ with respect to $\alpha $. Then $z\in $ $Z_{D}$ if and only if there
exists some $d\in D$ such that $dz\in D$.

\item For any $D\in \Omega _{N}$ denote by $O_{D}$ the orbit of $D$ under $%
\alpha $. Then for any $D_{1},D_{2}\in O_{D}$ we have $Z_{D_{1}}=Z_{D_{2}}$.

\item Let $n\in N$ and let $D=n^{G}$. Then $Z_{D}=\left[ n,G\right] \cap Z$.

\item Let $Y$ be a normal subset of $G$ contained in $N$. Let $n\in N$ and
let $D=n^{G}$. Set $Y_{n}:=n^{-1}Y\cap Z$. If $Y_{n}$ is non-empty then $%
Z_{D}\leq K\left( Y_{n}\right) $ (equivalently, $Y_{n}$ is a union of cosets
of $Z_{D}$ in $Z$).

\item Suppose that $G$ is a finite group. Then $k\left( G/Z\right) $ is
equal to the number of orbits of the multiplication action of $Z$ on $\Omega
_{G}$. It follows that $Z$ acts semi-regularly on $\Omega _{G}$ if and only
if $k\left( G\right) =k\left( Z\right) k\left( G/Z\right) $.
\end{enumerate}

\begin{proof}
\begin{enumerate}
\item Note that $N$ is the union of all elements of $\Omega _{N}$ and hence $%
\Omega _{N}\neq \emptyset $. Let $D\in \Omega _{N}$ and $z\in Z$. Since $%
Z\leq N$ we have $Dz\subseteq N$. We have to show that $Dz$ is also a
conjugacy class of $G$. Let $y\in Dz$ and $g\in G$. Then there exists $d\in
D $ such that $y=dz$. Using the fact that $Z$ is central, we have 
\begin{equation*}
y^{g}=\left( dz\right) ^{g}=d^{g}z\in Dz\text{.}
\end{equation*}%
Thus, $Dz$ is a normal subset of $G$. Now suppose that $d_{1},d_{2}\in D$.
Then there exists $g\in G$ such that $d_{2}=d_{1}^{g}$, and hence $%
d_{2}z=\left( d_{1}z\right) ^{g}$. Thus, any two elements in $Dz$ are
conjugate in $G$. This completes the proof that $Dz$ is also a conjugacy
class of $G$.

\item One direction is trivial. In the other direction let $d\in D$ be such
that $dz\in D$. Then $D\cap Dz\neq \emptyset $ and since both sets are
conjugacy classes this forces $Dz=D$.

\item Since $Z$ acts transitively on $O_{D}$, the stabilizers $Z_{D_{1}}$
and $Z_{D_{2}}$ are conjugate in $Z$, and since $Z$ is abelian, this implies 
$Z_{D_{1}}=Z_{D_{2}}$.

\item Let $z\in Z_{D}\leq Z$. Then $nz\in D$. This implies that there exists 
$x\in G$ such that $nz=x^{-1}nx$. Hence $z=n^{-1}x^{-1}nx=\left[ n,x\right]
\in \left[ n,G\right] $. Therefore $z\in \left[ n,G\right] \cap Z$.
Conversely, let $z\in \left[ n,G\right] \cap Z$. Then there exists $x\in G$
such that $z=\left[ n,x\right] =n^{-1}x^{-1}nx$. It follows that $%
nz=x^{-1}nx\in D$. By part 2, this implies $z\in Z_{D}$.

\item We have to show $Y_{n}Z_{D}=Y_{n}$. Let $\widetilde{Y}%
_{n}:=nY_{n}=Y\cap nZ$. Set $U:=Y\cap DZ$. Then, since $nZ\subseteq DZ$ we
have $\widetilde{Y}_{n}=U\cap nZ$. On the other hand, since $Y$ is normal in 
$G\,$, we get that $U$ is a union of classes in $O_{D}$, and hence, by part
3, $UZ_{D}=U$. This together with $\left( nZ\right) Z_{D}=nZ$ implies $%
\widetilde{Y}_{n}Z_{D}=\widetilde{Y}_{n}$. Finally, this implies $%
Y_{n}Z_{D}=n^{-1}\widetilde{Y}_{n}Z_{D}=n^{-1}\widetilde{Y}_{n}=Y_{n}$.

\item We first prove that $k\left( G/Z\right) $ is equal to the number of
orbits of the multiplication action of $Z$ on $\Omega _{G}$. Let $D\in
\Omega _{G}$. It is easy to check that the set $DZ$, viewed as a set of
cosets of $Z$ in $G$ is a conjugacy class of $G/Z$. By part 1, $DZ$ can also
be viewed as a disjoint union of conjugacy classes of $G$ which form one
orbit under the multiplication action of $Z$. Hence if $\widetilde{D}\in
\Omega _{G}$ then either $\widetilde{D}\subseteq DZ$ or $\widetilde{D}Z$ is
a distinct conjugacy class of $G/Z$. Moreover, every conjugacy class of $G/Z$
is of the form $DZ$ for some $D\in \Omega _{G}$. Therefore, $k\left(
G/Z\right) $ is equal to the number of orbits of the multiplication action
of $Z$ on $\Omega _{G}$. Now, the length of each orbit is at most $%
\left\vert Z\right\vert $, and $k\left( G\right) $ equals the sum of the
lengths of all of the $Z$-orbits. Since the action of $Z$ is semi-regular if
and only if all of the lengths equal $\left\vert Z\right\vert =k\left(
Z\right) $, we get that the action is semi-regular if and only if $k\left(
G\right) =k\left( Z\right) k\left( G/Z\right) $.
\end{enumerate}
\end{proof}

\begin{remark}
\label{Rem_NormalSetNotation}We make two notational remarks in relation to
Lemma \ref{Lem_ZAction}.

\begin{enumerate}
\item Let $G$ be a group and let $X$ be a set of conjugacy classes of $G$.
In some discussions (e.g., $X$ is an orbit of $Z$) it is convenient to abuse
notation and let $X$ stand also for the normal $G$-subset $%
\tbigcup\limits_{C\in X}C$. Conversely, if $X$ is a normal subset of $G$ we
may use the same letter $X$ to denote the set of all conjugacy classes of $G$
which are contained in $X$. We trust the reader to figure out the correct
interpretation from the context.

\item In view of the third claim of Lemma \ref{Lem_ZAction}, we also write $%
Z_{O_{C}}$ instead of $Z_{C}$.
\end{enumerate}
\end{remark}

\subsection{Central products and their conjugacy classes\label%
{Subsect_CentralProducts}}

The following theorem is at the basis of the construction of central
products of groups.

\begin{theorem}[\protect\cite{Gorenstein2}, Theorem 2.5.3]
Let $M,N,Z$ be groups with $Z\leq Z\left( M\right) $, and suppose that there
is an isomorphism $\theta $ of $Z$ into $Z\left( N\right) $. Then, if we
identify $Z$ with its image $\theta \left( Z\right) $, there exists a group $%
G$ of the form $G=MN$, with $Z=M\cap N\leq Z\left( G\right) $ such that $M$
centralizes $N$.
\end{theorem}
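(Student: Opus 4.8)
The plan is to realize $G$ as a quotient of the external direct product $M\times N$ by a suitable central subgroup that glues the two copies of $Z$ together. First I would form $P:=M\times N$ and, using the injective homomorphism $\theta :Z\to \theta(Z)\leq Z(N)$, define the anti-diagonal subset
\begin{equation*}
W:=\left\{ \left( z,\theta (z)^{-1}\right) \mid z\in Z\right\} .
\end{equation*}
Since $Z$ is abelian (because $Z\leq Z(M)$) and $\theta $ is a homomorphism, a short computation shows that $W$ is a subgroup of $P$. Moreover, as $z\in Z(M)$ and $\theta (z)^{-1}\in Z(N)$, every element of $W$ lies in $Z(M)\times Z(N)\leq Z(P)$; hence $W\leq Z(P)$, and in particular $W\trianglelefteq P$.

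Next I would set $G:=P/W$ and introduce the two subgroups $\overline{M}:=\{(m,1)W\mid m\in M\}$ and $\overline{N}:=\{(1,n)W\mid n\in N\}$. The crucial point, where injectivity of $\theta $ enters, is that the maps $m\mapsto (m,1)W$ and $n\mapsto (1,n)W$ are injective: if $(m,1)\in W$ then $(m,1)=(z,\theta (z)^{-1})$ forces $\theta (z)=1$, so $z=1$ and $m=1$, and symmetrically for $N$. Thus $M\cong \overline{M}$ and $N\cong \overline{N}$, and after identifying $M$ and $N$ with these images one has $G=MN$ because $(m,n)W=(m,1)W\cdot (1,n)W$.

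To finish I would verify the three structural claims. That $M$ centralizes $N$ is immediate from $(m,1)(1,n)=(1,n)(m,1)$ in $P$. For the intersection, $(m,1)W=(1,n)W$ holds if and only if $(m,n^{-1})\in W$, i.e.\ $m=z\in Z$ and $n=\theta (z)$; hence $\overline{M}\cap \overline{N}$ is exactly the image of $Z$ (equivalently, under $\theta $, of $\theta (Z)$), which is precisely the identification $Z=M\cap N$ asserted in the statement. Finally, for each $z\in Z$ the element $(z,1)$ is already central in $P$, since it commutes with $M$ (as $z\in Z(M)$) and with $N$ in the second coordinate; therefore its image is central in $G$, giving $M\cap N=Z\leq Z(G)$.

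All of these computations are routine; the only genuine obstacle is bookkeeping. One must keep the two a priori distinct copies of $Z$ — the one sitting inside $M$ and the image $\theta (Z)$ sitting inside $N$ — carefully separated until the passage to $P/W$ identifies them. It is exactly the injectivity of $\theta $ that guarantees the embeddings $M\hookrightarrow G$ and $N\hookrightarrow G$ do not collapse and that $M\cap N$ is no larger than $Z$.
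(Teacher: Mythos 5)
Your proof is correct and is precisely the standard construction --- the quotient of the external direct product $P=M\times N$ by the central anti-diagonal subgroup $W=\left\{ \left( z,\theta \left( z\right) ^{-1}\right) \mid z\in Z\right\} $ --- which is exactly the argument in the source the paper cites (Gorenstein, Theorem 2.5.3); the paper itself states this result without proof. All of your verifications (that $W\leq Z\left( P\right) $, that the induced embeddings of $M$ and $N$ into $P/W$ are injective, that their images intersect in exactly the glued copy of $Z$, and that this copy is central in $G$) are accurate.
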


Any group $G$ with $M,N,Z$ as in the theorem is said to be the central
product of $M$ and $N$ (with respect to $Z$), and we will write $G=M\circ
_{Z}N$.

Next we consider the structure of the conjugacy classes of a central product 
$G=M\circ _{Z}N$, and the multiplication action of $Z$ on them (see Section %
\ref{SubSect_CentralMultiplicationAction}).

\begin{lemma}
\label{Lem_ConjClassesOfCentralProduct}Let $G$ be a group and $M$ and $N$
normal subgroups of $G$, such that $G=M\circ _{Z}N$, where $Z:=$ $M\cap N$.
Let $C$ be a conjugacy class of $G$. Then:

\begin{enumerate}
\item There exist a conjugacy class $C_{M}$ of $M$ and a conjugacy class $%
C_{N}$ of $N$ such that $C=C_{M}C_{N}$.

\item Let $\left\{ \left( C_{M}^{\left( i\right) },C_{N}^{\left( i\right)
}\right) \right\} _{i\in I}$ be the set of all the distinct pairs of
conjugacy classes $C_{M}^{\left( i\right) }\in \Omega _{M}$ and $%
C_{N}^{\left( i\right) }\in \Omega _{N}$ such that $C=C_{M}^{\left( i\right)
}C_{N}^{\left( i\right) }$ for all $i\in I$. Then each one of $%
O_{M}:=\left\{ C_{M}^{\left( i\right) }\right\} _{i\in I}$ and $%
O_{N}:=\left\{ C_{N}^{\left( i\right) }\right\} _{i\in I}$ is a single orbit
of the multiplication action of $Z$ on $\Omega _{M}$ and $\Omega _{N}$
respectively.

\item $O_{C}=O_{M}O_{N}$ and $Z_{O_{C}}=Z_{O_{M}}Z_{O_{N}}$.

\item The equality $O_{C}=O_{M}O_{N}$ from part 3 defines a bijection $%
O\left( \Omega _{G}\right) \rightarrow O\left( \Omega _{M}\right) \times
O\left( \Omega _{N}\right) $.
\end{enumerate}
\end{lemma}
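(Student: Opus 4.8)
The plan is to reduce everything to two elementary facts about the central product $G=M\circ_{Z}N$ (which, by the definition recalled in the excerpt, means $G=MN$ with $M$ centralizing $N$): a conjugation formula, and a description of the ways a group element factors as an $M$-part times an $N$-part. First I would record the conjugation formula. Since elements of $M$ commute with elements of $N$, a direct computation gives $(mn)^{m'n'}=m^{m'}n^{n'}$ for all $m,m'\in M$ and $n,n'\in N$. Two consequences are immediate. Taking $n=1$ shows that for $m\in M$ the $G$-class $m^{G}$ coincides with the $M$-class $m^{M}$ (and symmetrically for $N$), so $\Omega_{M}$ is exactly the set of conjugacy classes of the group $M$, which legitimises the phrase ``conjugacy class of $M$'' in part 1. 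Letting $m',n'$ range freely, the same formula yields $(mn)^{G}=m^{M}n^{N}$ as a product of sets, which is part 1 with $C_{M}=m^{M}$ and $C_{N}=n^{N}$.

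For part 2, I would fix $C=g^{G}$ together with a factorization $g=mn$, $m\in M$, $n\in N$. The key observation is that the factorizations of $g$ of this shape are precisely $g=(mz^{-1})(zn)$ with $z\in Z$: if $g=m'n'=mn$ then $m^{-1}m'=n(n')^{-1}\in M\cap N=Z$. Given any decomposition $C=C_{M}'C_{N}'$ with $C_{M}'\in\Omega_{M}$ and $C_{N}'\in\Omega_{N}$, I would pick $m'\in C_{M}'$, $n'\in C_{N}'$ with $g=m'n'$ (possible since $g\in C=C_{M}'C_{N}'$), whence $C_{M}'=(m')^{M}=z^{-1}C_{M}$ and $C_{N}'=zC_{N}$ for the corresponding $z\in Z$. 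Conversely, each $z\in Z$ produces the decomposition $(z^{-1}C_{M},zC_{N})$. Thus the first coordinates of the admissible pairs sweep out exactly the $Z$-orbit $O_{C_{M}}$ and the second coordinates the orbit $O_{C_{N}}$, proving part 2 with $O_{M}=O_{C_{M}}$ and $O_{N}=O_{C_{N}}$.

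Part 3 then splits in two. The equality $O_{C}=O_{M}O_{N}$ is immediate by $Z$-translation, since $zC=(zC_{M})C_{N}$ and, using part 2, $(z_{1}C_{M})(z_{2}C_{N})=z_{1}z_{2}C\in O_{C}$. For the stabiliser identity $Z_{O_{C}}=Z_{O_{M}}Z_{O_{N}}$ (where by Remark \ref{Rem_NormalSetNotation} these are the pointwise stabilisers $Z_{C}$, $Z_{C_{M}}$, $Z_{C_{N}}$), the inclusion $Z_{C_{M}}Z_{C_{N}}\subseteq Z_{C}$ is a one-line check. The reverse inclusion is the main obstacle, and it is where the exact parametrisation of part 2 is genuinely needed rather than merely the existence of a decomposition: taking $z\in Z_{C}$, the pair $(zC_{M},C_{N})$ is again a decomposition of $C$, so part 2 produces $w\in Z$ with $zC_{M}=w^{-1}C_{M}$ and $C_{N}=wC_{N}$; hence $w\in Z_{C_{N}}$ and $wz\in Z_{C_{M}}$, giving $z=w^{-1}(wz)\in Z_{C_{M}}Z_{C_{N}}$ (using that $Z$ is abelian).

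Finally, part 4 is a formal consequence of parts 1--3. I would define the map $O(\Omega_{G})\to O(\Omega_{M})\times O(\Omega_{N})$ by $O_{C}\mapsto(O_{M},O_{N})$. It is well defined because replacing $C$ by $zC$ only translates the two associated orbits by $z$ within themselves; it is surjective because for any $C_{M}\in\Omega_{M}$ and $C_{N}\in\Omega_{N}$ the set $C_{M}C_{N}$ is a single $G$-class by part 1; and it is injective because two $G$-classes with the same image differ by a $Z$-translate by part 2 and so lie in the same orbit. The only subtle step in the whole argument is the reverse inclusion in part 3; everything else is either the conjugation formula or bookkeeping with $Z$-translates.
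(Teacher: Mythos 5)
Your proof is correct and follows essentially the same route as the paper's: the product formula $C=m^{M}n^{N}$ via the centralizing property, the parametrization of all decompositions of $C$ by $Z$-translates $\left( z^{-1}C_{M},zC_{N}\right) $, the identical two-inclusion stabilizer argument (down to the factorization $z=w^{-1}\left( wz\right) $), and part 4 as bookkeeping from parts 1--3. Your one addition --- making explicit via $\left( mn\right) ^{m^{\prime }n^{\prime }}=m^{m^{\prime }}n^{n^{\prime }}$ that $G$-classes contained in $M$ coincide with $M$-classes --- is a point the paper leaves implicit, and is a welcome clarification rather than a different approach.
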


\begin{proof}
1. Let $g\in C$. Then, since $G=MN$, there exist $m\in M$ and $n\in N$ such
that $g=mn$. Hence:%
\begin{equation*}
C=g^{G}=\left\{ \left( mn\right) ^{xy}|x\in M,y\in N\right\} =\left\{
m^{x}n^{y}|x\in M,y\in N\right\} =m^{M}n^{N}\text{,}
\end{equation*}%
where the third equality relies on the fact that $M$ and $N$ centralize each
other. Now we can take $C_{M}:=m^{M}$ and $C_{N}:=n^{N}$.

2. Let $C_{M}$ and $C_{N}$ be as in the first part. Note that for any $z\in
Z $, $C=\left( z^{-1}C_{M}\right) \left( zC_{N}\right) $. On the other hand,
suppose that $C=A_{1}B_{1}$ where $A_{1}\in \Omega _{M}$ and $B_{1}\in
\Omega _{N}$. Then there exist $m_{1}\in A_{1}$ and $n_{1}\in B_{1}$ such
that $g=mn=m_{1}n_{1}$. Hence $z:=m_{1}^{-1}m=n_{1}n^{-1}\in Z$ and $%
m_{1}=z^{-1}m$ while $n_{1}=zn$. This implies $A_{1}=z^{-1}C_{M}$ and $%
B_{1}=zC_{N}$. We have proved: 
\begin{equation*}
(\ast )~~\left\{ \left( C_{M}^{\left( i\right) },C_{N}^{\left( i\right)
}\right) \right\} _{i\in I}=\left\{ \left( z^{-1}C_{M},zC_{N}\right) |z\in
Z\right\} \text{,}
\end{equation*}%
which implies the claim.

3. Let $C_{M}$ and $C_{N}$ be as in the first part. We have $O_{M}=\left\{
z_{1}C_{M}|z_{1}\in Z\right\} $ and $O_{N}=\left\{ z_{2}C_{N}|z_{2}\in
Z\right\} $. This gives 
\begin{eqnarray*}
O_{M}O_{N} &=&\left\{ z_{1}C_{M}z_{2}C_{N}|z_{1},z_{2}\in Z\right\} =\left\{
z_{1}z_{2}C_{M}C_{N}|z_{1},z_{2}\in Z\right\} \\
&=&\left\{ zC|z\in Z\right\} =O_{C}\text{.}
\end{eqnarray*}%
For the second claim, let $z\in Z_{C}$. Then%
\begin{equation*}
C_{M}C_{N}=C=zC=\left( zC_{M}\right) C_{N}\text{.}
\end{equation*}%
This implies that the pairs $\left( zC_{M},C_{N}\right) ,\left(
C_{M},C_{N}\right) \in \Omega _{M}\times \Omega _{N}$ yield the same
conjugacy class $C$. By $(\ast )$ there exist $z^{\prime }\in Z$ such that $%
z^{\prime -1}zC_{M}=C_{M}$ and $z^{\prime }C_{N}=C_{N}$. Hence $z^{\prime
}\in $ $Z_{C_{N}}$ and $z^{\prime -1}z\in Z_{C_{M}}$, and since $z=z^{\prime
}\left( z^{\prime -1}z\right) $ we get $z\in Z_{C_{M}}Z_{C_{N}}$. This
proves that $Z_{C}\leq Z_{C_{M}}Z_{C_{N}}$. On the other hand, for any $%
z_{1}\in Z_{C_{M}}$ and $z_{2}\in Z_{C_{N}}$ we get 
\begin{equation*}
z_{1}z_{2}C=z_{1}C_{M}z_{2}C_{N}=C_{M}C_{N}=C\text{.}
\end{equation*}%
This proves that $Z_{C_{M}}Z_{C_{N}}\leq Z_{C}$ and altogether we get $%
Z_{C}=Z_{C_{M}}Z_{C_{N}}$.

4. By part 3, $O_{C}=O_{M}O_{N}$ with $O_{M}\in O\left( \Omega _{M}\right) $
and $O_{N}\in O\left( \Omega _{N}\right) $. Moreover, the proofs of parts 2
and 3 show that $O_{C}$ uniquely determines $\left( O_{M},O_{N}\right) $,
and that each pair $\left( O_{M},O_{N}\right) $ uniquely determines an orbit 
$O_{C}$ of $G$-conjugacy classes under the multiplication action of $Z$.
\end{proof}

The following type of subset plays an important role in the analysis of the
action of a central subgroup on the conjugacy classes of central products.

\begin{definition}
\label{Def_Z_[K]}Let $G$ be a group, $Z$ a central subgroup of $G$ and $%
K\trianglelefteq G$. We set $Z_{\left[ K\right] }:=\left[ K,K\right] \cap Z$.
\end{definition}

\begin{lemma}
\label{Lem_Z_M_and_Z_N}Let $G$ be a group and $M$ and $N$ normal subgroups
of $G$, such that $G=M\circ _{Z}N$, where $Z:=$ $M\cap N$.

\begin{enumerate}
\item[(a)] 
\begin{equation*}
\left[ M,M\right] \cap \left[ N,N\right] =Z_{\left[ M\right] }\cap Z_{\left[
N\right] }\text{.}
\end{equation*}

\item[(b)] Let $I$ and $J$ be indexing sets such that $O\left( \Omega
_{M}\right) :=\left\{ X_{i}\right\} _{i\in I}$ and $O\left( \Omega
_{N}\right) =\left\{ Y_{j}\right\} _{j\in J}$. Then:%
\begin{equation*}
Z_{\left[ M\right] }=\tbigcup\limits_{i\in I}Z_{X_{i}}\text{ , }Z_{\left[ N%
\right] }=\tbigcup\limits_{j\in J}Z_{Y_{j}}\text{ and }Z_{\left[ G\right]
}=\tbigcup\limits_{i\in I}\tbigcup\limits_{j\in J}Z_{X_{i}Y_{j}}=Z_{\left[ M%
\right] }Z_{\left[ N\right] }\text{.}
\end{equation*}
\end{enumerate}
\end{lemma}

\begin{proof}
(a) Since $G=M\circ _{Z}N$, we have $\left[ M,M\right] \cap \left[ N,N\right]
\subseteq Z$ and hence, by Definition \ref{Def_Z_[K]}, 
\begin{equation*}
\left[ M,M\right] \cap \left[ N,N\right] =\left[ M,M\right] \cap \left[ N,N%
\right] \cap Z=Z_{\left[ M\right] }\cap Z_{\left[ N\right] }\text{.}
\end{equation*}

(b) We prove $Z_{\left[ N\right] }=\tbigcup\limits_{j\in J}Z_{Y_{j}}$. First
note that since $M$ centralizes $N$, we have, for any $n\in N$, $\left[ n,G%
\right] =\left[ n,N\right] $. Let $j\in J$. By Lemma \ref{Lem_ZAction} parts
(3) and (4) and Remark \ref{Rem_NormalSetNotation} part 2, if $n\in N$
belongs to a conjugacy class in the orbit $Y_{j}$ then $Z_{Y_{j}}=\left[ n,G%
\right] \cap Z=\left[ n,N\right] \cap Z$. Therefore 
\begin{equation*}
\tbigcup\limits_{j\in J}Z_{Y_{j}}=\tbigcup\limits_{n\in N}\left( \left[ n,N%
\right] \cap Z\right) =Z\cap \tbigcup\limits_{n\in N}\left[ n,N\right] =Z_{%
\left[ N\right] }\text{.}
\end{equation*}

The proof that $Z_{\left[ M\right] }=\tbigcup\limits_{i\in I}Z_{X_{i}}$ and $%
Z_{\left[ G\right] }=\tbigcup\limits_{i\in I}\tbigcup\limits_{j\in
J}Z_{X_{i}Y_{j}}$ is similar, and $Z_{\left[ G\right] }=Z_{\left[ M\right]
}Z_{\left[ N\right] }$ follows from Lemma \ref%
{Lem_ConjClassesOfCentralProduct} part (3).
\end{proof}

\subsection{Basic properties of a set-direct product}

The following lemma states several equivalent conditions for the directness
of a setwise product. Although the group is not assumed to be abelian the
proof is essentially the same as that of \cite[Lemma 2.2]{SzaboSandsBook2009}%
, and is therefore omitted.

\begin{lemma}
\label{Lem_EquivDirectnessConditions}Let $G$ be a group, and let $X$ and $Y$
be two non-empty normal subsets of $G$. The following conditions are
equivalent.

\begin{enumerate}
\item[(a)] $XY=X\times Y$.

\item[(b)] $XX^{-1}\cap YY^{-1}=\left\{ 1_{G}\right\} $.

\item[(c)] $\left\{ Xy|y\in Y\right\} $ or $\left\{ xY|x\in X\right\} $ is a
partition of $XY$.
\end{enumerate}

Furthermore, if $X$ and $Y$ are finite sets then $XY$ is direct if and only
if $\left\vert XY\right\vert =\left\vert X\right\vert \cdot \left\vert
Y\right\vert $.
\end{lemma}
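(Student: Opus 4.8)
The plan is to read condition (a) correctly: since $X$ and $Y$ are assumed normal in the hypotheses, the content of $XY = X \times Y$ (Definition \ref{Def_DirectProductOfSubsets}) is precisely that the representation $g = xy$ with $x \in X$, $y \in Y$ is unique for every $g \in XY$. I would therefore prove the chain (a) $\Leftrightarrow$ (b) and (a) $\Leftrightarrow$ (c) by translating this uniqueness statement, and then deduce the finite counting criterion from (c). The single place where the argument departs from the abelian case of \cite[Lemma 2.2]{SzaboSandsBook2009} is the observation that for a normal subset the two difference sets $XX^{-1}$ and $X^{-1}X$ coincide; I would record this first. Indeed, given $x, a \in X$ one writes $xa^{-1} = a^{-1}(axa^{-1})$, and $axa^{-1} \in X$ by normality, so $XX^{-1} \subseteq X^{-1}X$; the reverse inclusion is symmetric, giving $XX^{-1} = X^{-1}X$, and likewise $YY^{-1} = Y^{-1}Y$.

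For (a) $\Leftrightarrow$ (b): an equality $x_1 y_1 = x_2 y_2$ with $x_i \in X$, $y_i \in Y$ is equivalent to $x_2^{-1}x_1 = y_2 y_1^{-1}$, an element lying in $X^{-1}X \cap YY^{-1}$. Uniqueness of representations then says exactly that any common element $w \in X^{-1}X \cap YY^{-1}$ forces $x_1 = x_2$ and $y_1 = y_2$, i.e. $w = 1_G$; conversely $X^{-1}X \cap YY^{-1} = \{1_G\}$ immediately yields uniqueness. Since $1_G \in X^{-1}X \cap YY^{-1}$ always (as $X, Y$ are nonempty), this reads as $X^{-1}X \cap YY^{-1} = \{1_G\}$, which by the normality identity above is the same as condition (b).

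For (a) $\Leftrightarrow$ (c): I would read $\{xY \mid x \in X\}$ as the family indexed by $X$ (and similarly $\{Xy \mid y \in Y\}$ indexed by $Y$), so that \emph{partition} means the blocks attached to distinct indices are disjoint; they always cover $XY$. I claim each of the two partition statements is by itself equivalent to (a). If representations are unique and $x_1 Y \cap x_2 Y \neq \emptyset$, then some $x_1 y = x_2 y'$ forces $x_1 = x_2$, so distinct $x$ give disjoint nonempty blocks and $\{xY\}_{x\in X}$ is a partition. Conversely, if $\{xY\}_{x \in X}$ is a partition and $x_1 y_1 = x_2 y_2$, the common element lies in $x_1 Y \cap x_2 Y$, forcing $x_1 = x_2$, and then left cancellation gives $y_1 = y_2$; thus (a) holds. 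The argument for $\{Xy\}_{y \in Y}$ is identical (using right cosets and right cancellation), so (a) is equivalent to either partition statement, hence to (c).

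Finally, for the finite case I would use the partition description: $XY = \bigcup_{x \in X} xY$ is a union of $|X|$ blocks each of cardinality $|Y|$, so $|XY| \le |X|\cdot|Y|$ with equality precisely when these blocks are pairwise disjoint for distinct $x$, i.e. when $\{xY\}_{x\in X}$ is a partition. By the equivalence (a) $\Leftrightarrow$ (c) this is exactly directness, giving the stated criterion $|XY| = |X|\cdot|Y|$. I do not expect a genuine obstacle here: the only non-abelian subtlety is the $XX^{-1} = X^{-1}X$ identity used to match the uniqueness condition to the precise form of (b), together with the need to read the collections in (c) as families indexed by $X$ and $Y$, so that what is asserted is disjointness of blocks attached to distinct indices rather than mere disjointness of the set of distinct cosets.
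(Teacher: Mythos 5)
Your proof is correct and is essentially the argument the paper intends: the paper omits the proof, stating it is the same as the abelian case of \cite[Lemma 2.2]{SzaboSandsBook2009}, and your writeup supplies exactly that translation of uniqueness of representation into conditions (b) and (c), plus the finite counting criterion via the coset blocks. You also correctly isolate the only genuinely non-abelian point, namely that normality yields $XX^{-1}=X^{-1}X$ (so the uniqueness condition $X^{-1}X\cap YY^{-1}=\{1_G\}$ matches the stated form of (b)), and your family-indexed reading of the collections in (c) is the right one.
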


\begin{remark}
\label{Rem_XIntersectY}Let $G$ be a group, and let $X$ and $Y$ be two
non-empty normal subsets of $G$. Then $XY=X\times Y$ implies $\left\vert
X\cap Y\right\vert \leq 1$. To see this observe that if $a,b\in X\cap Y$,
then $ab=ba$ are two factorizations in $X\times Y$, and hence, by uniqueness
of factorization, $a=b$.
\end{remark}

\begin{lemma}
\label{Lem_TrivialCentersImplyIdentityInside}Let $G$ be a group, and let $%
G=X\times Y$ be a set-direct decomposition of $G$. Then:

\begin{enumerate}
\item[(a)] If $C$ is a conjugacy class of $G$ contained in $X$, and $%
\left\vert C\right\vert >1$ then $Y\cap C=Y\cap C^{-1}=\emptyset $.

\item[(b)] There exists a central element $z$ of $G$ such that $z\in X$ and $%
z^{-1}\in Y$.

\item[(c)] If $z$ is any central element of $G$ then $G=\left( zX\right)
\times Y=X\times \left( zY\right) $.
\end{enumerate}
\end{lemma}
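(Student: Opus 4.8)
The plan is to reduce everything to the uniqueness of factorization, using the two equivalent formulations of directness already available: Lemma \ref{Lem_EquivDirectnessConditions}(b), which says $XX^{-1}\cap YY^{-1}=\{1_G\}$, together with the bound $|X\cap Y|\le 1$ from Remark \ref{Rem_XIntersectY}. The recurring device is that both $X$ and $Y$ are normal, so if a single element of a conjugacy class $C$ lies in $X$ (respectively $Y$), then all of $C$ does; combined with uniqueness of the decomposition $g=xy$ this pins down the relevant elements.

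For part (a) I would treat $Y\cap C=\emptyset$ and $Y\cap C^{-1}=\emptyset$ separately, since $X$ is only normal and need not be closed under inversion. For $Y\cap C$: if some $c\in C$ lay in $Y$, then by normality the whole class $C$ would lie in $Y$, and since $C\subseteq X$ we would get $C\subseteq X\cap Y$; as $|C|>1$ this contradicts $|X\cap Y|\le 1$. The inverse class is the subtle point, and I expect it to be the main obstacle, because $C^{-1}$ need not be contained in $X$, so the $X\cap Y$ argument is unavailable. Here I would instead exploit the uniqueness of the factorization of the identity: if $Y\cap C^{-1}\neq\emptyset$ then, again by normality of $Y$, the class $C^{-1}$ is contained in $Y$, and picking two distinct elements $c,c'\in C\subseteq X$ yields $1_G=cc^{-1}=c'c'^{-1}$ with $c,c'\in X$ and $c^{-1},c'^{-1}\in Y$; uniqueness forces $c=c'$, contradicting $|C|>1$.

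For part (b) I would take the unique factorization $1_G=x_0y_0$ with $x_0\in X$ and $y_0\in Y$, and set $z:=x_0$, so that automatically $z\in X$ and $z^{-1}=y_0\in Y$. The real content is that $z$ is central, and the key idea is again to conjugate the factorization of the identity: for an arbitrary $g\in G$, normality of $X$ and $Y$ shows that $1_G=x_0^{g}y_0^{g}$ is once more a factorization of $1_G$ in $X\times Y$, so uniqueness gives $x_0^{g}=x_0$ for every $g$, i.e.\ $z\in Z(G)$.

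For part (c), normality of $zX$ and $zY$ is immediate from $z$ being central (so $zX=Xz$ and $(zX)^{g}=zX$), and one has $(zX)Y=z(XY)=G$ and likewise $X(zY)=G$. Directness is routine bookkeeping: a representation $g=(zx)y$ is equivalent to $z^{-1}g=xy$, so the representations of $g$ in $(zX)Y$ correspond bijectively to the (unique) representations of $z^{-1}g$ in $X\times Y$, and the same reasoning handles $X\times(zY)$. I expect (c) to present no difficulty beyond this.
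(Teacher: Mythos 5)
Your proposal is correct in all three parts, and it takes a recognizably different route through the same basic mechanism. For (a), the paper argues both cases at once: since $\left\vert C\right\vert >1$, the set $CC^{-1}=C^{-1}C$ contains non-trivial elements, so $C\subseteq Y$ or $C^{-1}\subseteq Y$ would violate the directness criterion $XX^{-1}\cap YY^{-1}=\left\{ 1_{G}\right\} $ of Lemma \ref{Lem_EquivDirectnessConditions}(b); you instead split the two cases, using Remark \ref{Rem_XIntersectY} for $Y\cap C$ and the two factorizations $1_{G}=cc^{-1}=c^{\prime }c^{\prime -1}$ for $Y\cap C^{-1}$ — the latter being exactly the computation hidden inside Lemma \ref{Lem_EquivDirectnessConditions}(b), so this is equivalent but unpacked. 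The more substantive divergence is in (b): the paper deduces it from part (a), by noting that the class of $x_{0}$ (from $1_{G}=x_{0}y_{0}$) lies in $X$ while its inverse class lies in $Y$, forcing that class to be a central singleton; you make (b) independent of (a) by conjugating the factorization of the identity, $1_{G}=x_{0}^{g}y_{0}^{g}$, and invoking uniqueness to get $x_{0}^{g}=x_{0}$ for all $g\in G$ — a self-contained and arguably more transparent proof of centrality. For (c) the paper again applies Lemma \ref{Lem_EquivDirectnessConditions} after observing $\left( zX\right) ^{-1}\left( zX\right) =X^{-1}X$, whereas your translation bijection $g=\left( zx\right) y\Leftrightarrow z^{-1}g=xy$ verifies directness by hand; both are complete (you correctly check normality of $zX$ and that $\left( zX\right) Y=zG=G$). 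In sum, the paper's route buys uniformity — one criterion disposes of (a) and (c) simultaneously and (b) falls out of (a) — while yours buys independence of the parts and keeps the argument at the level of raw uniqueness of factorization. No gaps.
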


\begin{proof}

(a) Since $\left\vert C\right\vert >1$ then $C^{-1}C=CC^{-1}$ must contain
non-trivial elements, and hence, if $Y$ contains $C$ or $C^{-1}$ we get a
contradiction with Lemma \ref{Lem_EquivDirectnessConditions}(b).

(b) By part (a), if $C$ is a conjugacy class, $C\subseteq X$ and $%
C^{-1}\subseteq Y$, then $C$ must consist of a single central element. On
the other hand, since $1_{G}\in G$, we must have at least one class $C$ such
that $C\subseteq X$ and $C^{-1}\subseteq Y$.

(c) Since $z$ is central, the normality of $X$ implies the normality of $zX$
and $\left( zX\right) ^{-1}\left( zX\right) =X^{-1}X$. Similar claims hold
when $X$ is replaced by $Y$. Now apply Lemma \ref%
{Lem_EquivDirectnessConditions}.
\end{proof}

\begin{remark}
\label{Rem_Normalization}In view of Lemma \ref%
{Lem_TrivialCentersImplyIdentityInside}(c), $Z\left( G\right) \times Z\left(
G\right) $ acts on the set of all direct factorizations of $G$ via $X\times
Y\longmapsto \left( z_{1}X\right) \times \left( z_{2}Y\right) $ where $%
z_{1},z_{2}\in Z\left( G\right) $ and by Lemma \ref%
{Lem_TrivialCentersImplyIdentityInside}(b), each orbit of this action
contains at least one normalized factorization. Note that if $X\subseteq G$
is normalized and $X$ is not a subgroup of $G$ then $gX$ is not a subgroup
of $G$ for any $g\in G$. For suppose by contradiction that $gX$ is a
subgroup for some $g\in G$. Then, since $1_{G}\in X$ we get that $g\in gX$
and hence $g^{-1}\in gX$, implying $gX=g^{-1}\left( gX\right) =X$, whereby $%
X $ is a subgroup - a contradiction.
\end{remark}

\begin{lemma}
\label{Lem_AssociationOfDirect}Let $G$ be a group, and let $A,B,C$ be three
non-empty normal subsets of $G$. Suppose that the products $AB$ and $\left(
AB\right) C$ are direct. Then the products $BC$ and $A\left( BC\right) $ are
also direct.
\end{lemma}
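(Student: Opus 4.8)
The plan is to reduce both assertions to the symmetric-set criterion of Lemma \ref{Lem_EquivDirectnessConditions}(b): for non-empty normal subsets $X,Y$ of $G$, the product $XY$ is direct if and only if $XX^{-1}\cap YY^{-1}=\{1_{G}\}$. The entire argument then becomes a manipulation of the normal subsets $XX^{-1}$, and I will lean on two elementary facts about normal subsets. First, any two normal subsets $X,Y$ commute setwise, $XY=YX$, since $xy=(xyx^{-1})x$ with $xyx^{-1}\in Y$ by normality of $Y$. Second, if $X$ is normal then $X^{-1}$ and $XX^{-1}$ are normal, and a product of normal subsets is normal; in particular $A$, $BC$, $AB$, and $C$ are all non-empty and normal, so Lemma \ref{Lem_EquivDirectnessConditions} applies to each product in sight.

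I would fix the notation $\widehat{X}:=XX^{-1}$ for a normal subset $X$, noting $1_{G}\in\widehat{X}$ and $\widehat{X}^{-1}=\widehat{X}$. A one-line computation using setwise commutativity yields the identity $(XY)(XY)^{-1}=\widehat{X}\,\widehat{Y}$: indeed $(XY)(XY)^{-1}=X(YY^{-1})X^{-1}=X\widehat{Y}X^{-1}$, and since $\widehat{Y}$ is normal it commutes with $X^{-1}$, giving $X\widehat{Y}X^{-1}=XX^{-1}\widehat{Y}=\widehat{X}\,\widehat{Y}$. Applying Lemma \ref{Lem_EquivDirectnessConditions}(b) to the two hypotheses then records them as (i) $\widehat{A}\cap\widehat{B}=\{1_{G}\}$ (directness of $AB$) and (ii) $(\widehat{A}\,\widehat{B})\cap\widehat{C}=\{1_{G}\}$ (directness of $(AB)C$, using $(AB)(AB)^{-1}=\widehat{A}\,\widehat{B}$).

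Next I would deduce the two conclusions. For $BC$: since $1_{G}\in\widehat{A}$ we have $\widehat{B}\subseteq\widehat{A}\,\widehat{B}$, so $\widehat{B}\cap\widehat{C}\subseteq(\widehat{A}\,\widehat{B})\cap\widehat{C}=\{1_{G}\}$ by (ii), and equality holds because $1_{G}$ lies in the intersection; Lemma \ref{Lem_EquivDirectnessConditions}(b) then gives that $BC$ is direct. For $A(BC)$, I must check $\widehat{A}\cap(BC)(BC)^{-1}=\widehat{A}\cap(\widehat{B}\,\widehat{C})=\{1_{G}\}$. Taking $x\in\widehat{A}\cap\widehat{B}\,\widehat{C}$ and writing $x=uv$ with $u\in\widehat{B}$, $v\in\widehat{C}$, I get $v=u^{-1}x\in\widehat{B}^{-1}\widehat{A}=\widehat{A}\,\widehat{B}$ (using $\widehat{B}^{-1}=\widehat{B}$ and setwise commutativity), hence $v\in\widehat{C}\cap(\widehat{A}\,\widehat{B})=\{1_{G}\}$ by (ii). Thus $v=1_{G}$ and $x=u\in\widehat{A}\cap\widehat{B}=\{1_{G}\}$ by (i), so $x=1_{G}$; Lemma \ref{Lem_EquivDirectnessConditions}(b) then yields that $A(BC)$ is direct.

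The only step requiring genuine care — the one I would treat as the crux — is the bookkeeping of products of normal subsets: justifying the setwise commutativity $\widehat{Y}X^{-1}=X^{-1}\widehat{Y}$, the resulting identity $(XY)(XY)^{-1}=\widehat{X}\,\widehat{Y}$, and the normality of every composite set so that Lemma \ref{Lem_EquivDirectnessConditions} is legitimately invoked at each stage. Once these are secured, both conclusions follow formally from (i), (ii), and the containment $1_{G}\in\widehat{A}$, with no further computation.
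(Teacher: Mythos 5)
Your proof is correct, but it takes a genuinely different route from the paper's. The paper argues entirely elementwise: to see that $A\left( BC\right) $ is direct it takes two factorizations $a_{1}\left( b_{1}c_{1}\right) =a_{2}\left( b_{2}c_{2}\right) $, reassociates to $\left( a_{1}b_{1}\right) c_{1}=\left( a_{2}b_{2}\right) c_{2}$, and peels off factors using first the directness of $\left( AB\right) C$ and then that of $AB$; the remaining claim is handled by multiplying a coincidence of factorizations by a fixed $c\in C$ and invoking directness of $A\left( BC\right) $ (as printed, that paragraph actually re-derives directness of $AB$ rather than of $BC$ --- evidently a slip, fixable by the same trick with $\left( ab_{1}\right) c_{1}=\left( ab_{2}\right) c_{2}$ for a fixed $a\in A$). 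You instead translate everything through the symmetric-set criterion of Lemma \ref{Lem_EquivDirectnessConditions}(b) and work with the normal symmetric sets $\widehat{X}=XX^{-1}$; your key identity $\left( XY\right) \left( XY\right) ^{-1}=\widehat{X}\,\widehat{Y}$ for normal subsets is correctly justified (normality of $\widehat{Y}$ gives the needed setwise commutation), the hypotheses become $\widehat{A}\cap \widehat{B}=\left\{ 1_{G}\right\} $ and $\widehat{A}\,\widehat{B}\cap \widehat{C}=\left\{ 1_{G}\right\} $, and both conclusions then fall out of containments and the $x=uv$ splitting, with all sets in sight non-empty and normal so the criterion legitimately applies at every stage. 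The trade-off: the paper's argument uses only uniqueness of factorization and associativity, so it never invokes normality and is strictly more elementary (it would prove the analogous statement for arbitrary subsets); your reduction needs normality in an essential way, but in exchange it produces the reusable identity $\left( XY\right) \left( XY\right) ^{-1}=\widehat{X}\,\widehat{Y}$, makes the logical dependence on the two hypotheses completely transparent --- directness of $BC$ uses only the second hypothesis, via $\widehat{B}\subseteq \widehat{A}\,\widehat{B}$ --- and handles the $BC$ claim cleanly where the paper's printed text stumbles.
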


\begin{proof}
First we show that $A\left( BC\right) $ is direct. For this we have to show
that if $a_{1},a_{2}\in A$, $b_{1},b_{2}\in B$ and $c_{1},c_{2}\in C$ and 
\begin{equation*}
a_{1}\left( b_{1}c_{1}\right) =a_{2}\left( b_{2}c_{2}\right) \text{,}
\end{equation*}%
then $a_{1}=a_{2}$, and $b_{1}c_{1}=b_{2}c_{2}$. By associativity, $\left(
a_{1}b_{1}\right) c_{1}=\left( a_{2}b_{2}\right) c_{2}$. Since $\left(
AB\right) C$ is direct, we get $c_{1}=c_{2}$ and $a_{1}b_{1}=a_{2}b_{2}$.
Since $AB$ is direct we further get $a_{1}=a_{2}$ and\ $b_{1}=b_{2}$. It
follows that $b_{1}c_{1}=b_{2}c_{2}$. Now we show that $BC$ is direct. Let $%
b_{1},b_{2}\in B$, $c_{1},c_{2}\in C$ and suppose that $%
b_{1}c_{1}=b_{2}c_{2} $. Multiplying on the left by some $a\in A$ (recall
that by assumption $A\neq \emptyset $) \ and using associativity, we have 
\begin{equation*}
\left( ab_{1}\right) c_{1}=\left( ab_{2}\right) c_{2}\text{.}
\end{equation*}%
Since $\left( AB\right) C$ is direct, this gives $c_{1}=c_{2}$ and $%
ab_{1}=ab_{2}$, which yields $b_{1}=b_{2}$.
\end{proof}

\section{Set-direct factorizations of groups\label{Section_SetDirFacOfGps}}

In this section we prove the various conditions stated in the Introduction
for set-direct decompositions of groups. We begin by showing that if the
product of two subsets of a group is direct then they must centralize each
other.

\begin{theorem}
\label{Th_DirectImpliesCentralizing}Let $G$ be a group and let $X$ and $Y$
be two normal subsets of $G$. If $XY=X\times Y$ then $\left[ X,Y\right]
=\left\{ 1_{G}\right\} $.
\end{theorem}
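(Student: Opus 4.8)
The plan is to prove the contrapositive-free statement directly: fix arbitrary $x\in X$ and $y\in Y$ and show $xy=yx$, which is precisely $[x,y]=x^{-1}y^{-1}xy=1_G$. The whole argument rests on exhibiting a single element of $G$ that admits two factorizations of the form (element of $X$)(element of $Y$), and then invoking the uniqueness of factorization built into $X\times Y$ (Definition \ref{Def_DirectProductOfSubsets}).

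The element I would reverse is $yx$. First I use the normality of $Y$: since $x^{-1}yx=y^{x}\in Y$, I obtain the factorization $yx=x\cdot(x^{-1}yx)$ with first factor $x\in X$ and second factor $x^{-1}yx\in Y$. Next I use the normality of $X$: since $yxy^{-1}=x^{y^{-1}}\in X$, I obtain the second factorization $yx=(yxy^{-1})\cdot y$ with first factor $yxy^{-1}\in X$ and second factor $y\in Y$. Both displays show in particular that $yx\in XY=X\times Y$, so the uniqueness clause applies to it.

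Comparing the two factorizations of $yx$ and using uniqueness forces the first factors (and separately the second factors) to agree: $x=yxy^{-1}$, equivalently $x^{-1}yx=y$. Either equality rearranges to $xy=yx$, hence $[x,y]=1_G$. As $x\in X$ and $y\in Y$ were arbitrary, this gives $[X,Y]=\{1_G\}$, as required.

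The only genuinely creative step is the choice to factor the reversed product $yx$ in two ways, applying normality once to $X$ and once to $Y$; I expect this to be the crux rather than an obstacle, since once the two factorizations are in hand the conclusion is immediate from uniqueness. A possible alternative route, which I would keep in reserve, is to observe that $[x,y]=x^{-1}(x^{y^{-1}}... )$ lies in both $X^{-1}X$ and in $Y^{-1}Y$ and then appeal to the directness criterion of Lemma \ref{Lem_EquivDirectnessConditions}; but that variant requires care in matching $X^{-1}X$ against the $XX^{-1}$ appearing there, so the uniqueness argument above is cleaner and self-contained.
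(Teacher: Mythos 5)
Your proof is correct. The two factorizations $yx = x\,(x^{-1}yx) = (yxy^{-1})\,y$ are both legitimate: normality of $Y$ (respectively $X$) places the conjugated factor in the right set, the first display witnesses $yx\in XY$, and uniqueness then forces $x=yxy^{-1}$, i.e.\ $[x,y]=1_G$. The paper takes a genuinely different route: it first reduces to a product of two conjugacy classes $C\times D$ (noting that directness passes to normal subsets $X_1\subseteq X$, $Y_1\subseteq Y$), then, for $x\in C$, $y\in D$ and $t:=xy$, proves the centralizer identity $C_G(t)=C_G(x)\cap C_G(y)$ by conjugating the unique factorization $t=xy$ by an arbitrary $h\in C_G(t)$ and matching factors, and finally exploits the trivial membership $t\in C_G(t)$: thus $t$ commutes with both $x$ and $y$, and $y=x^{-1}t$ then commutes with $x$. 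Both arguments run on the same engine---uniqueness applied to one element factored two ways, with normality moving a factor across the product---but yours is shorter and avoids both the reduction to conjugacy classes and the centralizer computation, while the paper's detour earns an intermediate fact of independent interest: whenever a product of conjugacy classes is direct, $C_G(xy)=C_G(x)\cap C_G(y)$ for every choice of $x\in C$, $y\in D$, so the factorization is rigid under conjugation. Your reserve variant also goes through: $[x,y]=x^{-1}(y^{-1}xy)\in X^{-1}X$ and $[x,y]=(x^{-1}y^{-1}x)y\in Y^{-1}Y$, and for a normal subset $X$ one has $X^{-1}X=XX^{-1}$ (write $x_1^{-1}x_2=\left(x_1^{-1}x_2x_1\right)x_1^{-1}$ and use normality), so Lemma \ref{Lem_EquivDirectnessConditions}(b) applies; but, as you anticipated, the uniqueness argument is cleaner and needs no such bookkeeping.
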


\begin{proof}
Let $X$ and $Y$ be two normal subsets of $G$, and assume that $XY=X\times Y$%
. Let $x\in X$, $y\in Y$ and $t:=$ $xy$. It is clear that $C_{G}\left(
x\right) \cap C_{G}\left( y\right) \leq C_{G}\left( t\right) $. We prove
that $C_{G}\left( t\right) \leq C_{G}\left( x\right) \cap C_{G}\left(
y\right) $. Let $h\in C_{G}\left( t\right) $. Then $xy=t=t^{h}=x^{h}y^{h}$.
Since $x^{h}\in X$ and $y^{h}\in Y$, we have, by uniqueness of
factorization, that $x^{h}=x$ and $y^{h}=y$, implying $h\in C_{G}\left(
x\right) \cap C_{G}\left( y\right) $, and $C_{G}\left( t\right) =C_{G}\left(
x\right) \cap C_{G}\left( y\right) $. In particular, $t\in C_{G}\left(
x\right) \cap C_{G}\left( y\right) $. Hence $t$ commutes with both $x$ and $%
y $. But $t=$ $xy$ implies $y=x^{-1}t$. Using the fact that $x$ commutes
with both $x^{-1}$ and $t$ we get that $x$ and $y$ commute.
\end{proof}

Using Theorem \ref{Th_DirectImpliesCentralizing}, we obtain Theorem \ref%
{Th_MainNecessaryAndSufficient} as a consequence of an apparently more
general statement.

Let $G$ be a group and let $X$ and $Y$ be subsets of $G$. We write $%
G=X\times _{c}Y$ if every element $g\in G$ can be uniquely expressed as $%
g=xy $ where $x\in X$ and $y\in Y$, and if $X$ centralizes $Y$. Clearly, for
an abelian $G$, $G=X\times _{c}Y$ and $G=X\times Y$ is the same, and, in
general, $G=X\times Y$ implies $G=X\times _{c}Y$.

\begin{theorem}
\label{Th_G=Xx_cY} Let $G$ be a group and let $X$ and $Y$ be subsets of $G$.
Set $M:=\left\langle X\right\rangle $, $N:=\left\langle Y\right\rangle $ and 
$Z:=M\cap N$. For every $m\in M$ and $n\in N$ set $X_{m}:=\left(
m^{-1}X\right) \cap Z$ and $Y_{n}:=\left( n^{-1}Y\right) \cap Z$. Then $%
G=X\times _{c}Y$ if and only if

\begin{enumerate}
\item[(a)] $G=M\circ _{Z}N$; and

\item[(b)] $Z=X_{m}\times Y_{n}$ for every $m\in M$ and $n\in N$.
\end{enumerate}
\end{theorem}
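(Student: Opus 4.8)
The plan is to prove the two implications separately, in each case translating between unique factorizations of elements of $G$ and unique factorizations of elements of the central subgroup $Z$. The recurring technical input is that the relevant factors commute: whenever $X$ centralizes $Y$, the subgroups $M=\langle X\rangle$ and $N=\langle Y\rangle$ centralize each other (the centralizer of $Y$ is a subgroup containing $X$, hence contains $M$, and symmetrically), and then $Z=M\cap N\subseteq Z(G)$.

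Assume first that $G=X\times_c Y$. Since $X\subseteq M$, $Y\subseteq N$ and $G=XY$, we get $G=MN$, and the centralizing remark gives $[M,N]=\{1_G\}$. From $G=MN$ and $[M,N]=\{1_G\}$ it follows that $M$ and $N$ are normal in $G$ (conjugating $m\in M$ by $g=m'n'$ and cancelling the $n'$ part lands back in $M$) and that $Z\subseteq Z(G)$; this is exactly (a). For (b), fix $m\in M$, $n\in N$ and let $z\in Z$. The crux is the bijection $(a,b)\mapsto(ma,nb)$ between factorizations $z=ab$ with $(a,b)\in X_m\times Y_n$ and factorizations $mnz=xy$ with $(x,y)\in X\times Y$: using that $m,n$ commute and $Z$ is central, one checks that $m^{-1}x$ and $n^{-1}y$ lie in $M\cap N=Z$ with product $z$, and conversely that $(ma)(nb)=mnz$. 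Because $mnz\in G$ has a unique $X\times_c Y$-representation, $z$ has a unique representation in $X_m\times Y_n$; as $X_m,Y_n\subseteq Z(G)$ are automatically normal in $G$, this is exactly $Z=X_m\times Y_n$.

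For the converse, assume (a) and (b). Then $[M,N]=\{1_G\}$ by (a), so in particular $X$ centralizes $Y$. Unique factorization is proved in two steps. Existence: condition (b) forces every $X_m$ and $Y_n$ to be non-empty, whence $M=XZ$ and $N=YZ$, and therefore $G=MN=XYZ$; writing $g=xyz$ with $x\in X$, $y\in Y$, $z\in Z$ and factoring $z=ab$ with $a\in X_x$, $b\in Y_y$ via (b) gives $g=(xa)(yb)$ with $xa\in X$, $yb\in Y$. Uniqueness: if $x_1y_1=x_2y_2$, then $z:=x_2^{-1}x_1=y_2y_1^{-1}\in M\cap N=Z$, and one verifies $z\in X_{x_2}$, $z^{-1}\in Y_{y_2}$, while also $1_G\in X_{x_2}$, $1_G\in Y_{y_2}$; the two factorizations $1_G=z\cdot z^{-1}=1_G\cdot 1_G$ in the direct product $Z=X_{x_2}\times Y_{y_2}$ must coincide, so $z=1_G$ and hence $x_1=x_2$, $y_1=y_2$.

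I expect the main obstacle to be the forward direction's bijection, and specifically the verification that $m^{-1}x$ and $n^{-1}y$ lie in the intersection $M\cap N=Z$ (rather than merely in $M$ and $N$ respectively) and that their product collapses to $z$; this is exactly where the commutativity of $m$ with $n$ and the centrality of $Z$ are used. By contrast the reverse direction is mostly bookkeeping, its one conceptual point being that uniqueness in $G$ reduces to the unique representation of the identity $1_G$ in each $X_{x_2}\times Y_{y_2}$.
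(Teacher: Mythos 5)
Your proof is correct and takes essentially the same route as the paper's: your bijection $\left( a,b\right) \mapsto \left( ma,nb\right) $ is exactly the paper's translation argument (reducing $Z=X_{m}\times Y_{n}$ to the case $m=n=1_{G}$ via $G=\left( m^{-1}X\right) \times _{c}\left( n^{-1}Y\right) $) packaged in one step, with the same key verification that the translated factors land in $M\cap N=Z$. In the converse your existence step ($G=XYZ$ with the $Z$-part absorbed via condition (b)) matches the paper's $gZ=\left( mX_{m}\right) \left( nY_{n}\right) \subseteq XY$, and your uniqueness step, comparing the two factorizations $1_{G}=z\cdot z^{-1}=1_{G}\cdot 1_{G}$ in $X_{x_{2}}\times Y_{y_{2}}$, is the same computation the paper performs by invoking its remark that direct factors intersect in at most one element.
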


\begin{proof}
Assume that $G=X\times _{c}Y$. Then $M$\ is centralized by $Y$\ and hence $%
M\trianglelefteq G$. Similarly, $N\trianglelefteq G$. Furthermore, $M$
centralizes $N$. Thus (a) follows.

Now we prove (b). From $G=X\times _{c}Y$ and by (a) we have $G=XY=M\circ
_{Z}N$. Furthermore, $G=X\times _{c}Y$ and $N=\left\langle Y\right\rangle $
imply that $n$ centralizes $X$ for all $n\in N$. Similarly $m$ centralizes $%
Y $ for all $m\in M$. Hence, for all $m\in M$ and $n\in N$ we have 
\begin{equation*}
G=m^{-1}n^{-1}XY=m^{-1}Xn^{-1}Y\text{,}
\end{equation*}%
and this implies $G=\left( m^{-1}X\right) \times _{c}\left( n^{-1}Y\right) $
for all $m\in M$ and $n\in N$ (note that uniqueness of factorization follows
from that of $G=X\times _{c}Y$). In particular, for all $z\in Z$ there exist
unique $x\in X$ and $y\in Y$ such that $z=\left( m^{-1}x\right) \left(
n^{-1}y\right) $. This gives $m^{-1}x=y^{-1}nz$. As the l.h.s. is in $M$ and
the r.h.s. is in $N$ we get $m^{-1}x\in \left( m^{-1}X\right) \cap Z=X_{m}$
and similarly, $n^{-1}y\in Y_{n}$. This proves $Z=X_{m}\times Y_{n}$.

Conversely, assume that conditions (a) and (b) hold true. By (a) we get that 
$X\subseteq M$ centralizes $Y\subseteq N$. Let $g\in G$. By (a) there exist $%
m\in M$ and $n\in N$ such that $g=mn$. Using (b) and the fact that $X_{m}$
and $Y_{n}$ are central we get:%
\begin{equation*}
gZ=\left( mn\right) X_{m}Y_{n}=\left( mX_{m}\right) \left( nY_{n}\right)
=\left( X\cap mZ\right) \left( Y\cap nZ\right) \subseteq XY\text{.}
\end{equation*}%
Since $g\in gZ$ this implies the existence of $x\in X$ and $y\in Y$ such
that $g=xy$. It remains to prove uniqueness of representation. Suppose that
we also have $x_{1}\in X$ and $y_{1}\in Y$ such that $g=x_{1}y_{1}$. Then $%
xy=x_{1}y_{1}$ implying $x_{1}^{-1}xy=y_{1}$. Since $x_{1}^{-1}x\in M$ and $%
y\in N$, this implies $yx_{1}^{-1}x=y_{1}$ from which $%
x_{1}^{-1}x=y^{-1}y_{1}$ follows. Since the l.h.s. of the last equality is
in $M$ and the r.h.s. is in $N$ we get $x_{1}^{-1}x=y^{-1}y_{1}\in Z$. It
now follows that $x_{1}^{-1}x\in X_{x_{1}}$ and $y^{-1}y_{1}\in Y_{y}$,
hence $x_{1}^{-1}x=y^{-1}y_{1}\in X_{x_{1}}\cap Y_{y}$. Now, since $x_{1}\in
X$ we have $1_{G}\in X_{x_{1}}$ and similarly $1_{G}\in Y_{y}$. Therefore $%
1_{G}\in X_{x_{1}}\cap Y_{y}$. By (b), the product $X_{x_{1}}Y_{y}$ is
direct, therefore, by Remark \ref{Rem_XIntersectY}, $X_{x_{1}}\cap
Y_{y}=\left\{ 1_{G}\right\} $. This implies $x_{1}=x$ and $y_{1}=y$.
\end{proof}

\begin{proof}[Proof of Theorem \protect\ref{Th_MainNecessaryAndSufficient}]
Combine Theorems \ref{Th_DirectImpliesCentralizing} and \ref{Th_G=Xx_cY}.
\end{proof}

\begin{proof}[Proof of Theorem \protect\ref{Th_XxYAndFactorizationSystems}]
Since condition (a) of Theorem \ref{Th_MainNecessaryAndSufficient} holds by
assumption, we have that $G=X\times Y$ if and only if $Z=X_{m}\times Y_{n}$
for every $m\in M$ and $n\in N$. Hence, by Definition \ref%
{Def_FactorizationSystem}, it remains to show that for any $m\in M$ and $%
n\in N$ we have $M_{m}\leq K\left( X_{m}\right) $ and $N_{n}\leq K\left(
Y_{n}\right) $. This is immediate from Lemma \ref{Lem_ZAction} (5).
\end{proof}

\begin{remark}
\label{Rem_redundancy}There is a considerable redundancy in the description
of the factorization system $\left( \left\{ X_{m}\right\} _{m\in M},\left\{
Y_{n}\right\} _{n\in N}\right) $ of $Z$ used in Theorem \ref%
{Th_XxYAndFactorizationSystems}. First of all, since $Z$ is central we get
that $X_{m_{1}}=X_{m_{2}}$ for any two conjugate $m_{1},m_{2}\in M$, or
equivalently, $X_{m}$ depends only on the conjugacy class of $m$ in $G$.
Furthermore, suppose that $C_{1}$ and $C_{2}$ are two conjugacy classes of $%
M $ which belong to the same $Z$-orbit. Let $m_{i}\in C_{i}$ for $i=1,2$ and
let $c_{12}\in Z$ be such that $C_{2}=c_{12}C_{1}$. Then by Lemma \ref%
{Lem_ZAction} (3) we have $M_{m_{1}}=M_{m_{2}}$. Furthermore, we have $%
X_{m_{2}}=c_{12}^{-1}X_{m_{1}}$. Since similar claims hold true for the $%
Y_{n}$ it follows that one can replace the indexing sets $M$ and $N$ by,
respectively, the set of orbits of the multiplication action of $Z$ on $%
\Omega _{M}$ and on $\Omega _{N}$, and replace $\left( \left\{ X_{m}\right\}
_{m\in M},\left\{ Y_{n}\right\} _{n\in N}\right) $ by a "trimmed"
factorization system.
\end{remark}

\begin{theorem}
\label{Th_FacSysProperties}Let $Z$ be an abelian group and let $\mathcal{M}%
=\left\{ M_{i}\right\} _{i\in I}$ and $\mathcal{N}=\left\{ N_{j}\right\}
_{j\in J}$ be two multisets of subgroups of $Z$. Let $\left( \mathcal{A},%
\mathcal{B}\right) $ be an $\mathcal{MN}$-direct factorization system of $Z$
where $\mathcal{A}=\left\{ A_{i}\right\} _{i\in I}$ and $\mathcal{B}=\left\{
B_{j}\right\} _{j\in J}$. Then:

\begin{enumerate}
\item[(a)] For any $i\in I$ and $j\in J$, any two distinct elements $%
a_{1},a_{2}\in A_{i}$ belong to distinct cosets of $N_{j}$ in $Z$ and any
two distinct elements $b_{1},b_{2}\in B_{j}$ belong to distinct cosets of $%
M_{i}$ in $Z$.

\item[(b)] Suppose that $Z$ is finite. Then $\left( \mathcal{A},\mathcal{B}%
\right) $ has to satisfy the following arithmetical conditions:%
\begin{equation}
\left\vert Z\right\vert =\left\vert A_{i}\right\vert \left\vert
B_{j}\right\vert \text{, }\forall 1\leq i\leq r\text{, }\forall 1\leq j\leq s%
\text{,}  \label{Eq_|Z|=|A_i||B_j|}
\end{equation}%
which imply 
\begin{equation}
\left\vert A_{1}\right\vert =\cdots =\left\vert A_{r}\right\vert \text{, }%
\left\vert B_{1}\right\vert =\cdots =\left\vert B_{s}\right\vert \text{,}
\label{Eq_lA_i|AreAllEqual}
\end{equation}%
and also 
\begin{equation}
\func{lcm}\left( \left\vert M_{1}\right\vert ,...,\left\vert
M_{r}\right\vert \right) \text{ divides }\left\vert A_{1}\right\vert \text{
and }\func{lcm}\left( \left\vert N_{1}\right\vert ,...,\left\vert
N_{s}\right\vert \right) \text{ divides }\left\vert B_{1}\right\vert \text{.}
\label{Eq_LcmCondition}
\end{equation}
\end{enumerate}
\end{theorem}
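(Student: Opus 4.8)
The plan is to handle the two parts separately. Part (a) falls out directly from the uniqueness of factorization in each $Z = A_i \times B_j$ once the kernel conditions are brought to bear, and part (b) is then a short counting argument exploiting the finiteness of $Z$.

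For part (a), I would fix $i \in I$ and $j \in J$ and suppose toward a contradiction that two distinct $a_1, a_2 \in A_i$ lie in the same coset of $N_j$, say $a_1 = a_2 h$ with $h \in N_j$. Since $N_j \le K(B_j)$ we have $h B_j = B_j$, so choosing any $b \in B_j$ (note $B_j \neq \emptyset$ because $Z = A_i \times B_j$) gives $hb \in B_j$. Then $a_1 b = a_2(hb)$ exhibits one element of $Z$ as a product of a factor from $A_i$ and a factor from $B_j$ in two ways, and the uniqueness in $Z = A_i \times B_j$ forces $a_1 = a_2$, a contradiction. The dual statement, for distinct $b_1, b_2 \in B_j$ and cosets of $M_i$, follows verbatim after exchanging the roles of the two multisets and invoking $M_i \le K(A_i)$.

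For part (b), the equality $|Z| = |A_i||B_j|$ for every pair $(i,j)$ is immediate from $Z = A_i \times B_j$ together with the final assertion of Lemma \ref{Lem_EquivDirectnessConditions}, which says a finite setwise product is direct exactly when its cardinality equals the product of the cardinalities of its factors. Fixing $j$ and letting $i$ vary then shows all the $|A_i|$ coincide, and symmetrically all the $|B_j|$ coincide, which is the second displayed identity. For the divisibility condition I would observe that $M_i \le K(A_i)$ means every element of $M_i$ fixes $A_i$ under multiplication, so $A_i$ is a disjoint union of cosets of $M_i$ in $Z$; hence $|M_i|$ divides $|A_i|$, and by the previous step $|A_i| = |A_1|$. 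Since each $|M_i|$ divides the common value $|A_1|$, so does their least common multiple, and the analogous argument with $N_j \le K(B_j)$ handles the $B$-side. (Finiteness of $Z$ also guarantees only finitely many distinct $A_i$, $B_j$, $M_i$, $N_j$, justifying the relabeling by $1 \le i \le r$ and $1 \le j \le s$.)

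None of these steps is a genuine obstacle; the argument is essentially bookkeeping. The one place deserving care is part (a), where the correct move is to feed the auxiliary element $hb \in B_j$---legitimate precisely because $h \in N_j \le K(B_j)$---into the uniqueness hypothesis, rather than attempting to reason about $A_i$ directly. Once (a) is secured, the decisive structural fact for (b) is simply that a subset stabilized by a subgroup is a union of that subgroup's cosets, which delivers the divisibility claim at once.
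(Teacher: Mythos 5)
Your proof is correct and follows essentially the same route as the paper's: part (a) is the same uniqueness-of-factorization argument exploiting $N_j\leq K(B_j)$ (the paper picks a full coset $N_jy\subseteq B_j$ and forms $a_1b_2=a_2b_1$, while you use $b$ and $hb$ with $h=a_1a_2^{-1}$ --- a trivial variant), and part (b) matches the paper's counting via Lemma \ref{Lem_EquivDirectnessConditions} together with the fact that $A_i$ is a union of $M_i$-cosets, so $\left\vert M_i\right\vert$ divides the common value $\left\vert A_1\right\vert$. No gaps.
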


\begin{proof}

\begin{enumerate}
\item[(a)] We prove the first part of the claim. The proof of the second
part is essentially the same. Suppose, by contradiction, that there exist $%
i\in I$ and $j\in J$, and two elements $a_{1}\neq a_{2}\in A_{i}$ which
belong to the same coset of $N_{j}$ in $Z$. Then $a_{1}=n_{1}x$ and $%
a_{2}=n_{2}x$, where $n_{1}\neq n_{2}$ are elements of $N_{j}$ and $x\in Z$.
Since $B_{j}$ is a non-empty union of cosets of $N_{j}$, there exists some $%
y\in Z$, such that $N_{j}y\subseteq B_{j}$. Hence $b_{1}:=n_{1}y$ and $%
b_{2}:=n_{2}y$ belong to $B_{j}$ and $a_{1}b_{2}=a_{2}b_{1}$ are two
distinct factorizations in $A_{i}\times B_{j}$ of the same element - a
contradiction.

\item[(b)] Equation \ref{Eq_|Z|=|A_i||B_j|} is immediate from Lemma \ref%
{Lem_EquivDirectnessConditions}. For Equation \ref{Eq_LcmCondition} note
that $\left\vert K\left( A\right) \right\vert $ divides $\left\vert
A\right\vert $ for any finite, non-empty $A$. Hence, by Definition \ref%
{Def_FactorizationSystem}, $\left\vert M_{i}\right\vert $ divides $%
\left\vert A_{i}\right\vert $ for all $1\leq i\leq r$. Similarly, $%
\left\vert N_{j}\right\vert $ divides $\left\vert B_{j}\right\vert $ for all 
$1\leq j\leq s$. Now Equation \ref{Eq_LcmCondition} follows from Equation %
\ref{Eq_lA_i|AreAllEqual}.
\end{enumerate}
\end{proof}

\begin{proof}[Proof of Theorem \protect\ref{Th_ConstructingXxYFromFacSys}]
Fix arbitrary $i\in I$ and $j\in J$. Since $M_{i}\leq K\left( A_{i}\right) $
we get that $A_{i}$ is a union of cosets of $M_{i}$, and similarly, $B_{j}$
is a union of cosets of $N_{j}$. Therefore we can write $A_{i}=\tbigcup%
\limits_{s}a_{is}M_{i}$ where the $a_{is}\in Z$ represent distinct cosets of 
$M_{i}$ and similarly $B_{j}=\tbigcup\limits_{t}b_{jt}N_{j}$, with $b_{jt}$
representing distinct cosets of $N_{j}$. Hence all the $M$-conjugacy classes 
$a_{is}C_{i}$, which are contained in $A_{i}C_{i}$, are distinct for
distinct values of $s$, and similarly, the $N$-conjugacy classes $%
b_{jt}D_{j} $ which are contained in $B_{j}D_{j}$ are distinct for distinct
values of $t$. We claim that $\tbigcup\limits_{s,t}\left\{
a_{is}b_{jt}\right\} $ is a transversal of $Z_{ij}$ in $Z$, where $ij$ is
the unique $Z$-orbit formed by the product of $i$ and $j$ (see Lemma \ref%
{Lem_ConjClassesOfCentralProduct} (3)). In the first place, using $%
Z_{ij}=M_{i}N_{j}$ (Lemma \ref{Lem_ConjClassesOfCentralProduct} (3)), we get%
\begin{equation*}
\left( \tbigcup\limits_{s,t}\left\{ a_{is}b_{jt}\right\} \right)
Z_{ij}=\left( \tbigcup\limits_{s}a_{is}M_{i}\right) \left(
\tbigcup\limits_{t}b_{jt}N_{j}\right) =A_{i}B_{j}=Z\text{.}
\end{equation*}%
Next, suppose that $a_{is}b_{jt}Z_{ij}=a_{is^{\prime }}b_{jt^{\prime
}}Z_{ij} $. This yields $a_{is}b_{jt}\in \left( a_{is^{\prime }}M_{i}\right)
\left( b_{jt^{\prime }}N_{j}\right) $ which implies the existence of $%
z_{1}\in M_{i} $ and $z_{2}\in N_{j}$ such that $a_{is}b_{jt}=\left(
a_{is^{\prime }}z_{1}\right) \left( b_{jt^{\prime }}z_{2}\right) $. Observe
that $a_{is^{\prime }}z_{1}\in A_{i}$ and $b_{jt^{\prime }}z_{2}\in B_{j}$.
By uniqueness of factorization in $A_{i}\times B_{j}$ we get $%
a_{is}=a_{is^{\prime }}z_{1}$ and $b_{jt}=b_{jt^{\prime }}z_{2}$. If $s\neq
s^{\prime }$, we have, by our choice, that $a_{is}$ and $a_{is^{\prime }}$
belong to distinct $M_{i}$ cosets in contradiction to $a_{is}=a_{is^{\prime
}}z_{1}$. Hence $s=s^{\prime }$ and similarly $t=t^{\prime }$. Now it
follows that $a_{is}b_{jt}C_{i}D_{j}$ are distinct conjugacy classes for
distinct pairs $\left( s,t\right) $, and $\tbigcup\limits_{s,t}\left\{
a_{is}b_{jt}\right\} C_{i}D_{j}=ij$. This shows that $G=X\times Y$.
\end{proof}

The next theorem is needed for the proof of Theorem \ref%
{Th_XhasNormalTransversal}.

\begin{theorem}
\label{Th_ZHasNormalTInN}Let $N$ be a group and $Z$ a central subgroup of $N$%
. Then the following conditions are equivalent:

\begin{enumerate}
\item[(1)] $Z$ has a normal transversal $Y$ in $N$.

\item[(2)] $Z$ acts semi-regularly on $\Omega _{N}$.
\end{enumerate}

Furthermore, if $N$ is finite then each one of (1) and (2) is equivalent to $%
k\left( N\right) =k\left( Z\right) k\left( N/Z\right) $.
\end{theorem}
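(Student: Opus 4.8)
The plan is to prove the general equivalence (1) $\iff$ (2) directly, and then to read off the finite addendum from Lemma \ref{Lem_ZAction}. Throughout I would use that, since $Z\leq Z(N)$, left and right cosets of $Z$ coincide; that a normal subset of $N$ is exactly a union of conjugacy classes in $\Omega_N$; and that multiplication by a central element commutes with every element of $N$ and permutes $\Omega_N$ (Lemma \ref{Lem_ZAction}(1)). Recall that ``$Y$ is a normal transversal'' means $Y$ is a normal subset of $N$ (a union of classes) which meets every coset of $Z$ in exactly one point, and that semi-regularity of the action means every stabilizer $Z_D$ is trivial.

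For (2) $\Rightarrow$ (1), I would construct $Y$ by choosing, for each orbit $O$ of $Z$ on $\Omega_N$, a single class $C_O\in O$, and setting $Y:=\bigcup_O C_O$; this is normal by construction. To see that $Y$ is a transversal, fix $n\in N$, let $D=n^N$ lie in the orbit $O$, and write the chosen class as $C_O=z_0D$ with $z_0\in Z$. Then $(nz_0)^N=Dz_0=C_O\subseteq Y$, so $nz_0\in Y\cap nZ$ and $Y$ meets $nZ$. For uniqueness, if $nz,nz'\in Y$ then the classes $Dz$ and $Dz'$ both lie in $Y$ and in the orbit $O$, hence both equal $C_O$; thus $Dz=Dz'$, and since the stabilizer $Z_D$ is trivial this forces $z=z'$. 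So $Y$ meets each coset of $Z$ exactly once.

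For (1) $\Rightarrow$ (2), I would fix $D\in\Omega_N$ and show $Z_D=\{1_N\}$. Choosing $d\in D$, the transversal property gives a unique $y\in Y\cap dZ$, say $y=z_1d$ with $z_1\in Z$; normality of $Y$ then yields $y^N=z_1D\subseteq Y$. Now take any $z\in Z_D$, so $zD=D$. Then $z_1d=y\in Y$, and also $z_1(zd)=z(z_1d)\in z_1D\subseteq Y$ because $zd\in zD=D$. These two elements of $Y$ lie in the single coset $(z_1d)Z=yZ$, so the transversal condition forces $z_1zd=z_1d$, i.e.\ $z=1_N$. As $D$ was arbitrary, $Z$ acts semi-regularly.

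For the final assertion, when $N$ is finite Lemma \ref{Lem_ZAction}(6) says precisely that $Z$ acts semi-regularly on $\Omega_N$ if and only if $k(N)=k(Z)\,k(N/Z)$, so this numerical condition is equivalent to (2), hence to (1). I expect no single step to be computationally hard; the one delicate point is to run the argument without any counting, so that it is valid even when $N$ is infinite. The crux there is to exploit normality of $Y$ (membership of one element forces membership of its whole class) together with centrality of $Z$ (multiplying a representative by $z\in Z_D$ keeps it inside the same $Z$-coset): this is exactly what makes the transversal condition bite and collapse each stabilizer. The main obstacle is thus making the two directions interlock cleanly rather than any isolated estimate.
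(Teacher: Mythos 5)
Your proposal is correct and follows essentially the same route as the paper: for (2) $\Rightarrow$ (1) you build the transversal by picking one conjugacy class from each $Z$-orbit exactly as the paper does, and the finite addendum is read off from Lemma \ref{Lem_ZAction}(6) in both. The only (cosmetic) difference is in (1) $\Rightarrow$ (2), where the paper cites Lemma \ref{Lem_ZAction}(5) together with $\left\vert Y_{n}\right\vert =1$ to kill each stabilizer, while you re-derive that instance by hand via the two coset elements $z_{1}d$ and $z_{1}zd$ of $Y$ --- the same mechanism, unpacked.
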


\begin{proof}
Suppose that $Z$ has a normal transversal $Y$ in $N$. Then every element of $%
Y$ meets every coset of $Z$ in $N$ in precisely one element which is
equivalent to $\left\vert Y_{n}\right\vert =1$ for every $n\in N$. Let $D\in
\Omega _{N}$. By Lemma \ref{Lem_ZAction} (5) we have\ $Z_{D}\leq K\left(
Y_{n}\right) $, where $n\in D$. Since $\left\vert Y_{n}\right\vert =1$, we
get that $K\left( Y_{n}\right) =\left\{ 1_{Z}\right\} $ and therefore $%
Z_{D}=\left\{ 1_{Z}\right\} $ for all $D\in \Omega _{N}$ which is the claim
of (2).

Conversely, assume (2). Let $J$ denote the set of all distinct orbits of the
multiplication action of $Z$ on $\Omega _{N}$. For each $j\in J$ let $D_{j}$
be a conjugacy class belonging to the orbit $j$. We claim that $%
T:=\tbigcup\limits_{j\in J}D_{j}$ is a normal transversal of $Z$ in $N$. The
normality of $T$ in $N$ is clear as it is a union of conjugacy classes.
Since $Z$ acts transitively by multiplication on each orbit we have $%
ZD_{j}=j $ and hence $ZT=N$. Now suppose that $t_{1},t_{2}\in T$ satisfy $%
Zt_{1}=Zt_{2}$. Then there exist $z\in Z$ such that $t_{2}=zt_{1}$. If $%
z\neq 1$ then, by the semi-regularity assumption, $t_{1}$ and $t_{2}$ belong
to two distinct conjugacy classes of $N$, say $D_{1}$ and $D_{2}$
respectively, but, on the other hand, $D_{1}$ and $D_{2}$ belong to the same 
$Z$-multiplication orbit. This contradicts the construction of $T$. Hence $%
z=1$ and $t_{2}=t_{1}$.

Finally, the last claim of the theorem follows from Lemma \ref{Lem_ZAction}
(6).
\end{proof}

\begin{proof}[Proof of Theorem \protect\ref{Th_XhasNormalTransversal}]
By assumption, $M:=\left\langle X\right\rangle =X$. Suppose that $G=X\times
Y $. Then $G=X\circ _{Z}N$ follows from Theorem \ref%
{Th_MainNecessaryAndSufficient}(a). By Theorem \ref%
{Th_MainNecessaryAndSufficient}(b) we have $Z=X_{m}\times Y_{n}$ for every $%
m\in M$ and $n\in N$. Since $M=X$ we have $Z\subseteq X$ and $mZ\subseteq X$
for every $m\in M$ and hence $X_{m}=Z$. By uniqueness of representation in
the set-direct product $Z=X_{m}\times Y_{n}$, this forces $\left\vert
Y_{n}\right\vert =1$ for all $n\in N$.

Conversely, assume condition (2). Let $m\in M$ and $n\in N$. Then $X_{m}=Z$
and $\left\vert Y_{n}\right\vert =1$ implies $Z=X_{m}\times Y_{n}$. Now (1)
follows by Theorem \ref{Th_MainNecessaryAndSufficient}.

The condition that for any $n\in N$ we have $\left\vert Y_{n}\right\vert =1$%
, is equivalent to the statement that $Y$ intersects every coset of $Z$ in $%
N $ in precisely one element, which is equivalent to $Y$ being a transversal
of $Z$ in $N$. The remaining claims follow from Theorem \ref%
{Th_ZHasNormalTInN}.
\end{proof}

\begin{proof}[Second proof of Corollary \protect\ref%
{Coro_NAbelianWithTransversal}]

Let $Y$ be a normal transversal of $Z$ in $N$. Set $N_{1}=\left\langle
Y\right\rangle $. Clearly $N_{1}\leq N$ is a normal subgroup of $G$ and $%
Z_{1}:=M\cap N_{1}$ is a subgroup of $Z$. We have $G=MN=MZY=MY$ and hence $%
G=MN_{1}$ implying $G=M\circ _{Z_{1}}N_{1}$. Now, since $Y$ is a normal
transversal of $Z$ in $N$ it follows that $\left\vert Y_{n}\right\vert =1$
for all $n\in N$ (see last paragraph of the proof of Theorem \ref%
{Th_XhasNormalTransversal}) and hence for all $n\in N_{1}$. Furthermore, $%
Z_{1}\leq M$ and hence, setting $X:=M$, we get $X_{m}=\left( m^{-1}X\right)
\cap Z_{1}=Z_{1}$ for all $m\in M$. Thus, condition (2) of Theorem \ref%
{Th_XhasNormalTransversal} is satisfied when substituting $N_{1}$ for $N$
and $Z_{1}$ for $Z$. Consequently $G=M\times Y$.
\end{proof}

In order to prove Theorem \ref{Th_DPWithCyclicCenterNeceSuff} we need the
following result.

\begin{lemma}
\label{Lem_TrivialZiZjIntersect}Let $Z$ be an abelian group and let $%
\mathcal{M}:=\left\{ M_{i}\right\} _{i\in I}$ and $\mathcal{N}:=\left\{
N_{j}\right\} _{j\in J}$ be two multisets of subgroups of $Z$. If there
exists an $\mathcal{MN}$-direct factorization system of $Z$, then $M_{i}\cap
N_{j}=\left\{ 1_{G}\right\} $ for any $i\in I$ and $j\in J$.
\end{lemma}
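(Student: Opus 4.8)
The plan is to fix an arbitrary pair $i\in I$, $j\in J$, take any $z\in M_{i}\cap N_{j}$, and show directly that $z=1_{G}$. To set up, I would unwind Definition \ref{Def_FactorizationSystem} for the given factorization system $(\mathcal{A},\mathcal{B})$ at the chosen indices: this yields $Z=A_{i}\times B_{j}$ together with $M_{i}\leq K\left( A_{i}\right) $ and $N_{j}\leq K\left( B_{j}\right) $. The key idea is to exploit the kernel containments to place $z$ inside both $A_{i}A_{i}^{-1}$ and $B_{j}B_{j}^{-1}$, and then invoke the directness criterion of Lemma \ref{Lem_EquivDirectnessConditions}(b).

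For the main step I would argue as follows. Since $Z=A_{i}\times B_{j}$, both $A_{i}$ and $B_{j}$ are non-empty, so I may pick some $a\in A_{i}$ and some $b\in B_{j}$. Because $z\in M_{i}\leq K\left( A_{i}\right) $, by the definition of the kernel we have $zA_{i}=A_{i}$, whence $za\in A_{i}$; consequently $z=\left( za\right) a^{-1}\in A_{i}A_{i}^{-1}$. Symmetrically, $z\in N_{j}\leq K\left( B_{j}\right) $ gives $zB_{j}=B_{j}$, so $zb\in B_{j}$ and $z=\left( zb\right) b^{-1}\in B_{j}B_{j}^{-1}$. Thus $z\in A_{i}A_{i}^{-1}\cap B_{j}B_{j}^{-1}$.

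To finish, I would apply Lemma \ref{Lem_EquivDirectnessConditions} (with the ambient abelian group $Z$ in the role of $G$, so that every subset is automatically normal): the hypothesis $Z=A_{i}\times B_{j}$ is exactly condition (a) there, which forces condition (b), namely $A_{i}A_{i}^{-1}\cap B_{j}B_{j}^{-1}=\left\{ 1_{G}\right\} $. Combined with the previous paragraph this yields $z=1_{G}$, and since $i,j$ and $z$ were arbitrary we conclude $M_{i}\cap N_{j}=\left\{ 1_{G}\right\} $ for all $i\in I$ and $j\in J$. I do not expect a genuine obstacle here; the only real content is the translation of the kernel hypothesis into the membership $z\in A_{i}A_{i}^{-1}\cap B_{j}B_{j}^{-1}$, after which the directness criterion does the rest.
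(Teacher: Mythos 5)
Your proof is correct and in essence identical to the paper's: both exploit the kernel containments $M_{i}\leq K\left( A_{i}\right) $ and $N_{j}\leq K\left( B_{j}\right) $ to show that an element of $M_{i}\cap N_{j}$ is killed by the directness of $Z=A_{i}\times B_{j}$. The only cosmetic difference is that you route through criterion (b) of Lemma \ref{Lem_EquivDirectnessConditions}, namely $A_{i}A_{i}^{-1}\cap B_{j}B_{j}^{-1}=\left\{ 1_{Z}\right\} $, whereas the paper exhibits the two factorizations $\left( ga\right) b=a\left( gb\right) $ of one element and applies uniqueness directly -- an equivalent step by that very lemma.
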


\begin{proof}
Let $\mathcal{A}:=\left\{ A_{i}\right\} _{i\in I}$ and $\mathcal{B}:=\left\{
B_{j}\right\} _{j\in J}$ be an $\mathcal{MN}$-direct factorization system of 
$Z$. Let $i\in I$ and $j\in J$ be arbitrary. Let $g\in M_{i}\cap N_{j}$, $%
a\in A_{i}$ and $b\in B_{j}$. By definition of an $\mathcal{MN}$-direct
factorization system of $Z$, $g\in K\left( A_{i}\right) \cap K\left(
B_{j}\right) $ and therefore $\left( ga\right) b=a\left( gb\right) $ are two
factorizations of an element of $Z$ in $A_{i}\times B_{j}$. By uniqueness $%
ga=a$ implying $g=1_{Z}$. This proves $M_{i}\cap N_{j}=\left\{ 1_{G}\right\} 
$.
\end{proof}

\begin{proof}[Proof of Theorem \protect\ref{Th_DPWithCyclicCenterNeceSuff}]
Apply the notation of Lemma \ref{Lem_Z_M_and_Z_N}(b). Set $\mathcal{M}%
=\left\{ M_{i}\right\} _{i\in I}$ where $M_{i}:=Z_{X_{i}}$ for all $i\in I$,
and $\mathcal{N}=\left\{ N_{j}\right\} _{j\in J}$ where $N_{j}:=Z_{Y_{j}}$
for all $j\in J$. Let $\mathcal{A}=\left\{ A_{i}\right\} _{i\in I}$ where $%
A_{i}=X_{0}$ for all $i\in I$, and $\mathcal{B}=\left\{ B_{j}\right\} _{j\in
J}$ where $B_{j}=Y_{0}$ for all $j\in J$. By Lemma \ref{Lem_Z_M_and_Z_N}(b), 
$M_{i}\subseteq Z_{\left[ M\right] }=\left[ M,M\right] \cap Z$, and by
assumption of the theorem, $Z_{\left[ M\right] }\subseteq K_{Z}\left(
X_{0}\right) $ so $M_{i}\leq K\left( A_{i}\right) $ for each $i\in I$.
Similarly, $N_{j}\leq K\left( B_{j}\right) $ for all $j\in J$. Hence, by
Definition \ref{Def_FactorizationSystem}, $\left( \mathcal{A},\mathcal{B}%
\right) $ is an $\mathcal{MN}$-direct factorization system of $Z$. Now apply
Theorem \ref{Th_ConstructingXxYFromFacSys}. Note that $Z$ itself is an orbit
of conjugacy classes in both $M$ and $N$. Using the notation of Theorem \ref%
{Th_ConstructingXxYFromFacSys}, let $i\in I$ and $j\in J$ be the labels of $%
Z $ as an orbit of conjugacy classes. Choose in the proof of that theorem $%
C_{i}=D_{j}=\left\{ 1_{G}\right\} $. This choice ensures that $X_{0}=X\cap Z$
and $Y_{0}=Y\cap Z$.
\end{proof}

\begin{proposition}
\label{Prop_InducedDPOfMAndN}Let $G$ be a group and $M$ and $N$ normal
subgroups of $G$, such that $G=M\circ _{Z}N$. Let $G=X\times Y$ such that $%
X\subseteq M$ and $Y\subseteq N$. Then 
\begin{equation*}
M=X\times \left( Y\cap Z\right) \text{ and }N=Y\times \left( X\cap Z\right) 
\text{,}
\end{equation*}%
and%
\begin{equation*}
\left[ M,M\right] \cap \left[ N,N\right] =Z_{\left[ M\right] }\cap Z_{\left[
N\right] }=\left\{ 1_{G}\right\} \text{.}
\end{equation*}
\end{proposition}

\begin{proof}
Clearly, $X$ and $\left( Y\cap Z\right) $ are normal subsets of $M$. Since $%
G=X\times Y$ any $m\in M$ has a unique factorization $m=xy$ with $x\in
X\subseteq M$ and $y\in Y\subseteq N$. This implies $y=x^{-1}m\in M$ so $%
y\in Z$. Thus we have proved that every $m\in M$ has a unique factorization $%
m=xy$ with $x\in X$ and $y\in Y\cap Z$, so $M=X\times \left( Y\cap Z\right) $%
. The proof that $N=Y\times \left( X\cap Z\right) $ is similar.

For the proof of the second claim we use the first claim:%
\begin{gather*}
Z_{\left[ M\right] }\cap Z_{\left[ N\right] }\subseteq \left[ M,M\right]
\cap \left[ N,N\right] \\
=\left[ X\times \left( Y\cap Z\right) ,X\times \left( Y\cap Z\right) \right]
\cap \left[ Y\times \left( X\cap Z\right) ,Y\times \left( X\cap Z\right) %
\right] \\
=\left[ X,X\right] \cap \left[ Y,Y\right] \subseteq \left( XX^{-1}\right)
\cap \left( YY^{-1}\right) =\left\{ 1_{G}\right\} \text{,}
\end{gather*}%
where $\left[ X\times \left( Y\cap Z\right) ,X\times \left( Y\cap Z\right) %
\right] =\left[ X,X\right] $ since $Y\cap Z$ is central in $G$, $\left[ X,X%
\right] \subseteq \left( XX^{-1}\right) $ since $X$ is normal, and the last
equality follows from Lemma \ref{Lem_EquivDirectnessConditions}.
\end{proof}

\begin{proof}[Proof of Corollary \protect\ref%
{Coro_PrimePowerWithNoFixedPoints}]
Set $M:=G$ and $N:=Z:=\left\langle z\right\rangle $. Then $G=M\circ _{Z}N$
and by assumption $o\left( z\right) =p^{k}$ where $p$ is a prime and $k\geq
2 $ an integer. Let $X_{0}=\left\langle z^{p}\right\rangle $ and $%
Y_{0}=\left\{ 1_{Z},z,z^{2},...,z^{p-1}\right\} $. Then $X_{0}$ is a
subgroup of $Z$ of order $p^{k-1}$ and $Y_{0}$ is a transversal of $X_{0}$
in $Z$ whereby $Z=X_{0}\times Y_{0}$. It is immediate to check that $%
K_{Z}\left( X_{0}\right) =X_{0}$ and $K_{Z}\left( Y_{0}\right) =\left\{
1_{Z}\right\} $. Consider the multiplication action of $Z$ on $\Omega _{G}$.
Since \ $z$ is semi-regular, the stabilizer of any conjugacy class is proper
in $Z$. But $X_{0}$ is the unique maximal subgroup of $Z$ and hence it
contains all of the point stabilizers. By Lemma \ref{Lem_Z_M_and_Z_N}(b)\
this implies\ $Z_{\left[ M\right] }=\left[ M,M\right] \cap Z\subseteq
X_{0}=K_{Z}\left( X_{0}\right) $. The condition $\left[ N,N\right] \cap
Z\subseteq K_{Z}\left( Y_{0}\right) $ is immediate to verify. Thus all the
conditions of Theorem \ref{Th_DPWithCyclicCenterNeceSuff} are satisfied and
we can deduce the existence of a set-direct factorization $G=X\times Y$ with 
$X\subseteq M$, $Y\subseteq N$, $X\cap Z=X_{0}$ and $Y\cap Z=Y_{0}$, which
is non-trivial since $Z=X_{0}\times Y_{0}$ is non-trivial. Observe that both 
$Y=Y_{0}$ and $X$ are normalized, and that $Y$ is not a group. Now assume
that $G$ is perfect. We will show that under this assumption also $X$ is not
a group. Suppose by contradiction that $X$ is a group. Since $G=X\times Y$
and $Y$ is central, we have $\left[ G,G\right] =\left[ X,X\right] $ and
hence $X=\left\langle X\right\rangle $ contains all commutators in $G$ and
hence also the derived subgroup $G^{\prime }=\left\langle \left[ G,G\right]
\right\rangle $. But $G^{\prime }=G$ by assumption and hence $X=G$. This
contradicts $Y=Y_{0}\nsubseteq X$.
\end{proof}

\section{Finite quasi-simple groups}

In this section we prove Theorem \ref{Th_QuasiSimple} which states
conditions for the existence of non-trivial set-direct decompositions of
finite quasi-simple groups. The proof demonstrates the use of some of the
results of the previous sections. Recall that a group $G$ is quasi-simple if 
$G=G^{\prime }$ and $G/Z\left( G\right) $ is a simple group. The center of a
finite quasi-simple group $G$ must be isomorphic to a factor group of the
Schur multiplier of $G/Z\left( G\right) $ (\cite[Section 33]{Aschbacher}).
Information on the relevant Schur multipliers is given in \cite{WebAtlas}.

\begin{lemma}
\label{Lem_NonTrivialQuasi}Let $G$ be a finite quasi-simple group such that $%
G=X\times Y$ with $\left\vert X\right\vert >1$ and $\left\vert Y\right\vert
>1$. Then precisely one of $X$ and $Y$ is central and the other factor,
which has non-central elements, generates $G$ but is not a group.
\end{lemma}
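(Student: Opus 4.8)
The plan is to combine the central-product structure forced by a set-direct factorization with the simplicity of the quotient $G/Z(G)$, and then to rule out a subgroup factor using perfectness. First I would set $M:=\langle X\rangle$, $N:=\langle Y\rangle$ and $Z:=M\cap N$. Since $G=X\times Y$, Theorem~\ref{Th_MainNecessaryAndSufficient}(a) gives $G=M\circ_Z N$, so $M$ and $N$ are normal subgroups of $G$ that centralize each other (in particular $[M,N]=\{1_G\}$) and $Z\leq Z(G)$. I would also note at the outset that $G$ is non-abelian: if $G$ were abelian then $G=G'$ would force $G=\{1_G\}$, contradicting $|G|=|X|\,|Y|>1$.

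Next I would analyze the images $\overline M:=MZ(G)/Z(G)$ and $\overline N:=NZ(G)/Z(G)$ in the simple group $G/Z(G)$. Each is a normal subgroup of $G/Z(G)$, hence is either trivial or all of $G/Z(G)$, and $\overline M$ is trivial exactly when $M\leq Z(G)$. The two images cannot both be trivial, since then $G=MN\leq Z(G)$ would make $G$ abelian. They also cannot both equal $G/Z(G)$: from $[M,N]=\{1_G\}$ we get $[\overline M,\overline N]=\{\overline 1\}$, so $\overline M=\overline N=G/Z(G)$ would force the non-abelian simple group $G/Z(G)$ to centralize itself. Hence exactly one image is trivial; say $\overline N$ is trivial, so $Y\subseteq N\leq Z(G)$ is central, while $MZ(G)=G$ with $G$ non-abelian gives $M\not\leq Z(G)$. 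Since $Z(G)$ is a subgroup, $X\subseteq Z(G)$ would give $M=\langle X\rangle\leq Z(G)$, so in fact $X\not\subseteq Z(G)$ and $X$ contains non-central elements. This proves that precisely one of $X$ and $Y$ is central.

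Finally I would show the non-central factor $X$ is not a group. Suppose for contradiction that $X$ is a subgroup. Because $Y$ is central and $G=XY$, every element of $G$ has the form $xy$ with $x\in X$, $y\in Y$, and $[x_1y_1,x_2y_2]=[x_1,x_2]$; thus the commutator set satisfies $[G,G]=[X,X]$. As $X$ is a subgroup this yields $G'=\langle[G,G]\rangle=\langle[X,X]\rangle\leq X$, and perfectness $G=G'$ forces $X=G$. But then Lemma~\ref{Lem_EquivDirectnessConditions} gives $|G|=|X|\,|Y|=|G|\,|Y|$, whence $|Y|=1$, contradicting $|Y|>1$. Therefore $X$ is not a group, which completes the argument.

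The bookkeeping steps (normality of $M,N$, the trivial/whole dichotomy for images, and the commutator identity) are routine; the one place to state carefully is the mutual exclusion of the two cases for $\overline M,\overline N$, where the centralizing property $[M,N]=\{1_G\}$ is exactly what forbids both factors from surjecting onto the simple quotient, and the perfectness of $G$ is exactly what collapses a subgroup factor to all of $G$.
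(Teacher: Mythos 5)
Your proof is correct and takes essentially the same route as the paper's: Theorem \ref{Th_MainNecessaryAndSufficient} yields the central product $G=M\circ_Z N$, the simplicity of $G/Z(G)$ forces exactly one factor to be central, and the commutator-set identity $\left[ G,G\right] =\left[ X,X\right] $ combined with perfectness rules out the non-central factor being a subgroup. The only cosmetic difference is in the middle step: the paper writes $G/Z=\left( M/Z\right) \times \left( N/Z\right) $ and uses perfectness of both direct factors to conclude one of them is trivial (so $N=Z$ exactly), whereas you pass to the images $MZ(G)/Z(G)$ and $NZ(G)/Z(G)$ and use the centralizing relation $\left[ M,N\right] =\left\{ 1_{G}\right\} $ to exclude both images being full, obtaining the slightly weaker but fully sufficient conclusion $N\leq Z(G)$.
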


\begin{proof}
Set $M:=\left\langle X\right\rangle $, $N:=\left\langle Y\right\rangle $ and 
$Z:=M\cap N$. By Theorem \ref{Th_MainNecessaryAndSufficient}, $G=M\circ
_{Z}N $ and $Z\leq Z\left( G\right) $. We have $G/Z=\left( M/Z\right) \times
\left( N/Z\right) $ and both $M/Z$ and $N/Z$ are perfect groups. Since $%
G/Z\left( G\right) \cong \left( G/Z\right) /\left( Z\left( G\right)
/Z\right) $ is simple, we must have that one of $M/Z$ and $N/Z$ is trivial.
Assume without loss of generality that $N=Z$. Then $Y$ is central and $M=G$.
Now suppose by contradiction that $X$ is a group. Then, since $Y$ is central
we get $\left[ G,G\right] =\left[ X,X\right] $ (see the first paragraph of
Section \ref{Sect_NotationBackground}). But this gives, using the fact that $%
G$ is perfect%
\begin{equation*}
G=G^{\prime }=\left\langle \left[ X,X\right] \right\rangle =X^{\prime }\leq X%
\text{,}
\end{equation*}%
in contradiction to $\left\vert Y\right\vert >1$.
\end{proof}

Let $G$ be a finite quasi-simple group. Blau \cite{Blau1994} has essentially
classified the semi-regular elements of all finite quasi-simple groups. The
following rephrasing of \cite[Theorem 1]{Blau1994} is a key result for the
proof of Theorem \ref{Th_QuasiSimple}. Recall that $z\in Z\left( G\right) $
is semi-regular if $zC\neq C$ for any $C\in \Omega _{G}$.

\begin{theorem}[{\protect\cite[Theorem 1]{Blau1994}}]
\label{Th_Blau}Let $G$ be a finite quasi-simple group. If $z\in Z\left(
G\right) $ is semi-regular then one of the following holds:
\end{theorem}

\begin{enumerate}
\item[(i)] $G/Z(G)=A_{6}$, $A_{7}$, $\mathrm{Fi}_{22}$, $\mathrm{PSU}(6,4)$,
or $^{2}\mathrm{E}_{6}(4)$ and $o\left( z\right) =6$.

\item[(ii)] $G/Z(G)=\mathrm{PSU}(4,9)$, $\mathrm{M}_{22}$, or $G/Z(G)=%
\mathrm{PSL}(3,4)$ with $Z\left( G\right) $ cyclic, and $o\left( z\right)
\in \left\{ 6,12\right\} $.

\item[(iii)] $G/Z(G)=\mathrm{PSL}(3,4)$, $Z\left( G\right) $ is non-cyclic
and $o\left( z\right) \in \left\{ 2,4,6,12\right\} $.
\end{enumerate}

For case (iii) of the last theorem we need more detailed information. We use
the fact that the Schur multiplier of $\mathrm{PSL}(3,4)$ is isomorphic to $%
C_{3}\times C_{4}\times C_{4}$.

\begin{lemma}
\label{Lem_PSL34_Non_Cyclic}Let $G$ be a finite quasi-simple group with $%
G/Z(G)=\mathrm{PSL}(3,4)$, and $Z(G)$ is non-cyclic. Then:

\begin{description}
\item[(a)] All elements of $Z\left( G\right) $ which are of order $6$ or $12$
are semi-regular.

\item[(b)] $Z\left( G\right) $ has a semi-regular $2$-element if and only if 
$\left\vert Z\left( G\right) \right\vert $ is divisible by $8$.

\item[(c)] If $\left\vert Z\left( G\right) \right\vert $ is divisible by $8$
but not by $16$ then $Z\left( G\right) $ has precisely one semi-regular
involution and no semi-regular element of order $4$.

\item[(d)] If $\left\vert Z\left( G\right) \right\vert $ is divisible by $16$
then $Z\left( G\right) $ contains precisely six semi-regular elements of
order $4$ and no semi-regular involution.
\end{description}
\end{lemma}

\begin{proof}
$Z\left( G\right) $ must be isomorphic to one of the following six
non-cyclic subgroups of $C_{3}\times C_{4}\times C_{4}$: 
\begin{equation*}
C\times C_{4}\times C_{4},~C\times C_{4}\times C_{2},~C\times C_{2}\times
C_{2}\text{,}
\end{equation*}%
where $C$ is either the trivial group or a group of order $3$. Moreover,
each of these six groups determines a unique, up to isomorphism,
quasi-simple $G$ with $G/Z(G)=\mathrm{PSL}(3,4)$. By \cite[Lemma 1]{Blau1994}%
, $z\in Z\left( G\right) $ is semi-regular if and only if $%
\tsum\limits_{\chi \in \mathrm{Irr}\left( G\right) }\chi \left( z\right)
/\chi \left( 1\right) =0$, where $\mathrm{Irr}\left( G\right) $ is the set
of complex irreducible characters of $G$. The character tables of all of the
six relevant groups are implemented in GAP's character table library (\cite%
{GAP2017},\cite{GAPCTblLib1.2.1}), and this was used in order to identify
the central elements and check, for each one of them, the cited condition.
\end{proof}

The following two lemmas are needed for analyzing the case of a quasi-simple 
$G$ with $G/Z(G)=\mathrm{PSL}(3,4)$.

\begin{lemma}
\label{Lem_NonDirectH6}Let $H_{6}=\left\langle a\right\rangle \times
\left\langle b\right\rangle $ with $o\left( a\right) =3$ and $o\left(
b\right) =2$. Let $X:=\left\{ 1,x\right\} $ where $x\neq 1$ is some element
of $H_{6}$ and $Y:=\left\{ 1,a,ab\right\} $. Then $XY$ is not direct.
\end{lemma}

\begin{proof}
A direct computation gives $YY^{-1}=H_{6}$. Hence $XX^{-1}\cap
YY^{-1}=XX^{-1}$. Since $x\in XX^{-1}$, we get $XX^{-1}\neq \left\{
1_{G}\right\} $, and therefore the claim follows from Lemma \ref%
{Lem_EquivDirectnessConditions}.
\end{proof}

\begin{lemma}
\label{Lem_NonDirectH12}Let $H_{12}=\left\langle a\right\rangle \times
\left\langle b\right\rangle $ with $o\left( a\right) =3$ and $o\left(
b\right) =4$. Let $X\subseteq H_{12}$ where $1\in X$ and $\left\vert
X\right\vert =4$, and let $Y:=\left\{ 1,b,ab^{\varepsilon }\right\} $ with $%
\varepsilon \in \left\{ 1,-1\right\} $ . Then $XY$ is not direct.
\end{lemma}

\begin{proof}
Assume, by contradiction, that $XY$ is direct. Then, by Lemma \ref%
{Lem_EquivDirectnessConditions}, $XX^{-1}\cap YY^{-1}=\left\{ 1\right\} $. \
We have%
\begin{equation*}
YY^{-1}=\left\{ 1,b,b^{-1}\right\} \cup S_{Y}\text{ where }S_{Y}:=\left\{
ab^{\varepsilon },a^{-1}b^{-\varepsilon },a^{-1}b^{1-\varepsilon
},ab^{\varepsilon -1}\right\} \text{.}
\end{equation*}%
Since $\left\vert YY^{-1}\right\vert =7$ and $|H_{12}|=12$, the size of $%
XX^{-1}$ is either $4$, $5$, or $6$. Also note that (using $b^{2-\varepsilon
}=b^{\varepsilon }$ for all $\varepsilon \in \left\{ 1,-1\right\} $): 
\begin{equation*}
H_{12}\backslash \left\{ YY^{-1}\right\} =\left\{ b^{2}\right\} \cup S_{X}%
\text{ where }S_{X}:=b^{2}S_{Y}=\left\{ ab^{-\varepsilon
},a^{-1}b^{\varepsilon },a^{-1}b^{-\left( 1+\varepsilon \right)
},ab^{1+\varepsilon }\right\} \text{.}
\end{equation*}

Since $\{ 1 \} = X \cap YY^{-1}$, the set $X$ has no element of order $4$
and hence $X$ is not a subgroup.

Assume that $|XX^{-1}|=4$. Since $X\cup X^{-1}\subseteq XX^{-1}$, we must
have $XX^{-1}=X\cup X^{-1}$ and so $X=X^{-1}$ implying that $X^{2}=X$. Thus $%
X$ is a subgroup contradicting our previous assertion.

Assume that $\left\vert XX^{-1}\right\vert =5$. Both $X\cup X^{-1}$ and $%
XX^{-1}$ are inverse closed and contain $1$. Also, for any $h\in H_{12}$, $%
h=h^{-1}$ if and only if \ $h\in \left\{ 1,b^{2}\right\} $. Since $%
\left\vert XX^{-1}\right\vert $ is odd this implies $b^{2}\notin XX^{-1}$.
But then $b^{2}\notin X\cup X^{-1}$ and hence $\left\vert X\cup
X^{-1}\right\vert $ is odd, implying $XX^{-1}=X\cup X^{-1}=\{1\}\cup S_{X}$.

Observe that for any $\delta \in \{1,-1\}$, we have $a^{-1}b^{\varepsilon
}\in X^{\delta }$ if and only if $ab^{1+\varepsilon }\in X^{\delta }$, since
otherwise $b^{-1} = \left( a^{-1}b^{\varepsilon }\right) \left(
ab^{1+\varepsilon }\right) \in XX^{-1}$ is a contradiction. Similarly, $%
ab^{-\varepsilon }\in X^{\delta }$ if and only if $a^{-1}b^{-\left(
1+\varepsilon \right) }\in X^{\delta }$. But the combination of the last two
assertions contradicts $\left\vert X\right\vert =4$.

Assume that $|XX^{-1}|=6$. Then $XX^{-1}=\{1,b^{2}\}\cup S_{X}$. First
suppose that $b^{2}\not\in X$. Then $|X\cup X^{-1}|=5$ as in the discussion
of the $\left\vert XX^{-1}\right\vert =5$ case, and this implies $X\cup
X^{-1}=\left\{ 1\right\} \cup S_{X}$. But now a routine check shows that $%
b^{2}\notin \left( X\cup X^{-1}\right) ^{2}\supseteq XX^{-1}$ - a
contradiction. Thus $b^{2}\in X$ and hence $b^{2}X^{-1}\subseteq XX^{-1}$.
Since $b^{2}S_{X}=S_{Y}\subseteq YY^{-1}$, we have 
\begin{equation*}
b^{2}(X^{-1}\cap S_{X})=b^{2}X^{-1}\cap b^{2}S_{X}\subseteq XX^{-1}\cap
YY^{-1}=\{1\}\text{.}
\end{equation*}%
Since $b^{2}$ is an involution, it follows that $X^{-1}\cap S_{X}\subseteq
\{b^{2}\}$, implying $X^{-1}\cap S_{X}=\emptyset $ and $\left\vert
X\right\vert =2$ - the final contradiction.
\end{proof}

\begin{proof}[Proof of Theorem \protect\ref{Th_QuasiSimple}]
Suppose that $G=X\times Y$ such that $X$ and $Y$ are normalized and $%
\left\vert X\right\vert >1$ and $\left\vert Y\right\vert >1$. Set $%
M=\left\langle X\right\rangle $, $N=\left\langle Y\right\rangle $, $Z:=M\cap
N$. By Theorem \ref{Th_MainNecessaryAndSufficient}, $G=M\circ _{Z}N$, and by
Lemma \ref{Lem_NonTrivialQuasi} we can assume $N=Z=\left\langle
Y\right\rangle $ and $M=G$. Since $Y$ is normalized we have $1\in Y$, and
since $1\in M_{m}$ for every $m\in M$ we get, by Theorem \ref%
{Th_FacSysProperties}(a), that for all $1\neq y\in Y$ and all $m\in M$, $%
y\notin M_{m}$. Thus, all non-trivial elements of $Y$ must be semi-regular,
and it suffices to consider groups $G$ which fall into one of the cases
(i)-(iii) of Theorem \ref{Th_Blau}. We now split the discussion into three
cases.

\begin{description}
\item[Case I] $G/Z\left( G\right) $ is not isomorphic to $\mathrm{PSL}(3,4)$
or $G/Z\left( G\right) =\mathrm{PSL}(3,4)$ and $8$ does not divide $%
\left\vert Z\left( G\right) \right\vert $. We have to show that the
assumptions of Case I lead to a contradiction. By Theorem \ref%
{Th_XxYAndFactorizationSystems}, using its notation, there exists an
associated $\mathcal{MN}$-direct factorization system $\left( \left\{
X_{m}\right\} _{m\in M},\left\{ Y_{n}\right\} _{n\in N}\right) $ of $Z$. It
suffices to show that the assumption $Z=X_{m}\times Y$, for all $m\in M$,
leads to a contradiction. By Theorem \ref{Th_Blau} and by Lemma \ref%
{Lem_PSL34_Non_Cyclic}, all non-trivial $y\in Y$ satisfies $o\left( y\right)
\in \left\{ 6,12\right\} $. Since $Y\subseteq Z$ this implies that $%
\left\vert Z\right\vert $ is divisible by $6$. Therefore, $Z$ has an element
of order $2$ and an element of order $3$, and each such element must fix at
least one conjugacy class of $G$, whereby we must have $m_{1},m_{2}\in M$
(not necessarily distinct!) such that $2|\left\vert M_{m_{1}}\right\vert $
and $3|\left\vert M_{m_{2}}\right\vert $. Now, by Equations \ref%
{Eq_lA_i|AreAllEqual} and \ref{Eq_LcmCondition}, $6$ divides $\left\vert
X_{m}\right\vert $ for all $m\in M$. Hence, since $\left\vert Y\right\vert
>1 $ we can conclude that $\left\vert Z\right\vert =\left\vert
X_{m}\right\vert \left\vert Y\right\vert \geq 12$ (for any $m\in M$).
Suppose that $\left\vert Z\right\vert =12$. In this case $\left\vert
Y\right\vert =2$ and since $Z=N=\left\langle Y\right\rangle $ and $1\in Y$,
the single non-trivial element of $Y$ must be a generator of $Z$ and hence
has order $12 $. Therefore $Z$ has an element of order $4$, which fixes a
conjugacy class, and Equation \ref{Eq_LcmCondition} implies that $12$
divides $\left\vert X_{m}\right\vert $ in contradiction to $\left\vert
Y\right\vert =2$. Thus, we must have $\left\vert Z\right\vert >12$.
Examining the relevant Schur multipliers of $G/Z\left( G\right) $, we are
left with the possibility $G/Z\left( G\right) =\mathrm{PSU}(4,9)$. The Schur
multiplier of this group is isomorphic to $C_{3}\times C_{3}\times C_{4}$.
The condition $\left\vert Z\right\vert >12$ and the fact that $\left\vert
Z\right\vert $ divides the order of the Schur multiplier leave two
possibilities for $Z$.

\begin{description}
\item[(1)] $Z\cong C_{3}\times C_{3}\times C_{2}$. Choose generators for the
direct factors so that $Z=\left\langle g_{1}\right\rangle \times
\left\langle g_{2}\right\rangle \times \left\langle g_{3}\right\rangle $
with $o\left( g_{1}\right) =o\left( g_{2}\right) =3$ and $o\left(
g_{3}\right) =2$. We have $Z=\left\langle Y\right\rangle $, and all
non-trivial elements of $Y$ have order $6$. As argued above, $6~|~\left\vert
X_{m}\right\vert $ for all $m\in M$, and hence $6\left\vert Y\right\vert ~$%
divides$~\left\vert X_{m}\right\vert \left\vert Y\right\vert =\left\vert
Z\right\vert =18$, forcing $\left\vert Y\right\vert =3$. Thus $Y=\left\{
1,y_{1},y_{2}\right\} $ with $o\left( y_{1}\right) =o\left( y_{2}\right) =6$%
. Note that $y_{i}=\theta _{i}g_{3}$ where $\theta _{1}$ and $\theta _{2}$
are elements of order $3$. Since $Z=\left\langle Y\right\rangle $ we get
that $\theta _{1}$ and $\theta _{2}$ belong to two distinct order $3$
subgroups. Note that $Z$ has $4$ distinct subgroups of order $3$, and fixing
any two of them, the other two are the two diagonal subgroups of the direct
product of the first two. Therefore we can assume, without loss of
generality, that $y_{1}:=g_{1}g_{3}$ and $y_{2}:=g_{2}g_{3}$. Observe that $%
g_{1}$, being of order $3$, must fix a conjugacy class of $G$. It follows
that there exists some $m\in M$ such that $g_{1}$ fixes the conjugacy class
of $m$. Thus $X_{m}g_{1}$ $=X_{m}$ and so $\left\langle g_{1}\right\rangle
\leq K\left( X_{m}\right) $. However, we will now prove that there is no $%
m\in M$ such that $Z=X_{m}\times Y$ and $\left\langle g_{1}\right\rangle
\leq K\left( X_{m}\right) $. Assume by contradiction that there exists $m\in
M$ such that $Z=X_{m}\times Y$ and $\left\langle g_{1}\right\rangle \leq
K\left( X_{m}\right) $. Note that the cyclic group $\left\langle
g_{2}g_{3}\right\rangle $ of order $6$ is a transversal of $\left\langle
g_{1}\right\rangle $, and hence $X_{m}=\left\langle g_{1}\right\rangle
\alpha _{1}\cup \left\langle g_{1}\right\rangle \alpha _{2}$, where $\alpha
_{1}$ and $\alpha _{2}$ are two distinct elements of $\left\langle
g_{2}g_{3}\right\rangle $. By Lemma \ref%
{Lem_TrivialCentersImplyIdentityInside}(c) $Z=\left( \alpha
_{1}^{-1}X_{m}\right) \times Y$, and hence we can assume, without loss of
generality, that $X_{m}=\left\langle g_{1}\right\rangle \cup \left\langle
g_{1}\right\rangle \alpha $ for some $\alpha \notin \left\langle
g_{1}\right\rangle $. Furthermore, $Z=X_{m}\times Y$ implies $Z/\left\langle
g_{1}\right\rangle =\left\{ 1,\alpha \right\} \times \left\{
1,g_{2},g_{2}g_{3}\right\} $, where, by a slight abuse of notation, we label 
$\left\langle g_{1}\right\rangle $ cosets of $Z$ by their $Z$%
-representatives names. This contradicts the assertion of Lemma \ref%
{Lem_NonDirectH6}. Hence the possibility $Z\cong C_{3}\times C_{3}\times
C_{2}$ is ruled out By Theorem \ref{Th_XxYAndFactorizationSystems} (see also
Definition \ref{Def_FactorizationSystem}).

\item[(2)] $Z\cong C_{3}\times C_{3}\times C_{4}$. Choose generators for the
direct factors so that $Z=\left\langle g_{1}\right\rangle \times
\left\langle g_{2}\right\rangle \times \left\langle g_{4}\right\rangle $
with $o\left( g_{1}\right) =o\left( g_{2}\right) =3$ and $o\left(
g_{4}\right) =4$. In this case, every element in $Z$ of order $3$ or $4$
must fix a conjugacy class of $G$. Hence, by Equation \ref{Eq_LcmCondition}, 
$12$ divides $\left\vert X_{m}\right\vert $ for all $m\in M$. By similar
arguments to those of case (1), $Y=\left\{ 1,y_{1},y_{2}\right\} $ with $%
\left\{ o\left( y_{1}\right) ,o\left( y_{2}\right) \right\} \subseteq
\left\{ 6,12\right\} $. If $o\left( y_{1}\right) =o\left( y_{2}\right) =6$
then $\left\langle Y\right\rangle <Z$ - a contradiction. Suppose that $%
Y=\left\{ 1,y_{1},y_{2}\right\} $ with $o\left( y_{1}\right) =6$, $o\left(
y_{2}\right) =12$ and $\left\langle Y\right\rangle =Z$. \ Then (compare with
the treatment of the previous case $Z\cong C_{3}\times C_{3}\times C_{2}$)
we may assume that $y_{1}:=g_{1}g_{4}^{2}$ and $y_{2}:=g_{2}g_{4}$ or $%
y_{1}:=g_{2}g_{4}^{2}$ and $y_{2}:=g_{1}g_{4}$. Assume $%
y_{1}:=g_{1}g_{4}^{2} $ and $y_{2}:=g_{2}g_{4}$. Then $Yy_{2}^{-1}=\left\{
1,y_{1}^{\prime }:=y_{1}y_{2}^{-1}=g_{1}g_{2}^{-1}g_{4},y_{2}^{\prime
}:=y_{2}^{-1}\right\} $ and $o\left( y_{1}^{\prime }\right) =o\left(
y_{2}^{\prime }\right) =12$. By Lemma \ref%
{Lem_TrivialCentersImplyIdentityInside}(c) $Z=X_{m}\times \left(
Yy_{2}^{-1}\right) $. Applying similar considerations for the case $%
y_{1}:=g_{2}g_{4}^{2}$ and $y_{2}:=g_{1}g_{4}$, we conclude that it remains
to consider $Y=\left\{ 1,y_{1},y_{2}\right\} $ with $o\left( y_{1}\right)
=o\left( y_{2}\right) =12$. We prove, by contradiction, that there is no $%
m\in M$ such that $Z=X_{m}\times Y$ and $\left\langle g_{1}\right\rangle
\leq K\left( X_{m}\right) $. Arguing as before, we can assume, without loss
of generality, that $y_{1}:=g_{1}g_{4}^{\varepsilon _{1}}$ and $%
y_{2}:=g_{2}g_{4}^{\varepsilon _{2}}$ with $\varepsilon _{1},\varepsilon
_{2}\in \left\{ -1,1\right\} $. Using the fact that $Z=X_{m}\times Y$ if and
only if $Z=X_{m}^{-1}\times Y^{-1}$ implies that it would suffice to check $%
\left( \varepsilon _{1},\varepsilon _{2}\right) =\left( 1,1\right) $\ and $%
\left( \varepsilon _{1},\varepsilon _{2}\right) =\left( 1,-1\right) $. The
cyclic group $\left\langle g_{2}g_{4}\right\rangle $ of order $12$ is a
transversal of $\left\langle g_{1}\right\rangle $, and hence we can assume,
without loss of generality, using arguments as before, that $%
X_{m}=\left\langle g_{1}\right\rangle \cup \left\langle g_{1}\right\rangle
\alpha _{1}\cup \left\langle g_{1}\right\rangle \alpha _{2}\cup \left\langle
g_{1}\right\rangle \alpha _{3}$, where $\alpha _{1},\alpha _{2},\alpha _{3}$
represent any three distinct non-trivial cosets of $\left\langle
g_{1}\right\rangle $ in $Z$ which are contained in $\left\langle
g_{2}g_{4}\right\rangle \left\langle g_{1}\right\rangle \backslash
\left\langle g_{1}\right\rangle Y$. Furthermore, $Z=X_{m}\times Y$ implies $%
Z/\left\langle g_{1}\right\rangle =\left\{ 1,\alpha _{1},\alpha _{2},\alpha
_{3}\right\} \times \left\{ 1,g_{4},g_{2}g_{4}^{\pm 1}\right\} $. This
contradicts the assertion of Lemma \ref{Lem_NonDirectH12}. Hence the
possibility $Z\cong C_{3}\times C_{3}\times C_{4}$ is ruled out.
\end{description}

\item[Case II] $G/Z(G)=\mathrm{PSL}(3,4)$, and $Z(G)$ is non-cyclic such
that $8$ divides $\left\vert Z\left( G\right) \right\vert $ but $16$ does
not divide $\left\vert Z\left( G\right) \right\vert $. By Lemma \ref%
{Lem_PSL34_Non_Cyclic} (c), $Z(G)$ has a unique semi-regular involution $z$.
Choose $Z=\left\langle z\right\rangle \cong C_{2}$. Clearly $G=M\circ _{Z}N$
with $M=G$ and $N=Z$. Notice that each orbit of conjugacy classes of $G$
with respect to the multiplication action of $Z$ has length $2$. Setting $%
I:=O\left( \Omega _{M}\right) $ and $J:=O\left( \Omega _{N}\right) $, we
have $\left\vert J\right\vert =1$, and all of the point stabilizers, $M_{i}$
with $i\in I$ and the single $N_{j}$ with $j\in J$ are trivial. Thus $%
\mathcal{A}:=\left\{ A_{i}\right\} _{i\in I}$ and $\mathcal{B}:=\left\{
B_{j}\right\} _{j\in J}$ is an $\mathcal{MN}$-direct factorization system of 
$Z$, where $\mathcal{M}=\left\{ M_{i}\right\} _{i\in I}$, $\mathcal{N}%
=\left\{ N_{j}\right\} _{j\in J}$, $A_{i}=\left\{ 1\right\} $ for all $i\in
I $ and $B_{j}=Z$. By Theorem \ref{Th_ConstructingXxYFromFacSys} we get a
normalized set-direct factorization $G=X\times Z$, where $X$ contains
precisely one conjugacy class from each $Z$-orbit, and $X$ is not a group by
Lemma \ref{Lem_NonTrivialQuasi}.

\item[Case III] $G/Z(G)=\mathrm{PSL}(3,4)$, and $Z(G)$ is non-cyclic such
that $16$ divides $\left\vert Z\left( G\right) \right\vert $. By Lemma \ref%
{Lem_PSL34_Non_Cyclic} (d), $Z(G)$ has semi-regular elements of order $4$.
In this case $G$ satisfies all the conditions of Corollary \ref%
{Coro_PrimePowerWithNoFixedPoints} and hence $G$ has a non-trivial
normalized set-direct factorization such that none of the factors is a group.
\end{description}
\end{proof}

\section{Products of two conjugacy classes}

In this section we consider the directness of products of two conjugacy
classes.

\begin{theorem}
\label{Th_CDNonDirect}Let $G$ be a group having a unique minimal normal
subgroup $N$. Suppose in addition that $N$ is non-abelian. Let $C$ and $D$
be any two non-trivial conjugacy classes of $G$. Then the product $CD$ is
non-direct. In particular, if $G$ is a finite almost simple group then the
product of any two non-trivial conjugacy classes of $G$ is non-direct.
\end{theorem}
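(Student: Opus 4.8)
The plan is to argue by contradiction, using Theorem \ref{Th_DirectImpliesCentralizing} to convert the directness hypothesis into a commuting condition, and then to exploit the rigidity forced by the unique non-abelian minimal normal subgroup $N$. So suppose $CD=C\times D$ is direct. Since $C$ and $D$ are conjugacy classes, they are normal subsets of $G$, and Theorem \ref{Th_DirectImpliesCentralizing} applies to give $[C,D]=\{1_G\}$; that is, every element of $C$ commutes with every element of $D$.

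Next I would pass to the generated subgroups $M:=\langle C\rangle$ and $P:=\langle D\rangle$, both of which are normal in $G$ because they are generated by conjugacy classes. The inclusion $C\subseteq C_G(D)=C_G(P)$ upgrades, since $C_G(P)$ is a subgroup, to $M\leq C_G(P)$, so that $M$ and $P$ centralize each other: $[M,P]=\{1_G\}$. Because $C$ and $D$ are non-trivial, $M$ and $P$ are non-trivial normal subgroups of $G$.

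The crux is to locate $N$ inside both $M$ and $P$. Since $N$ is the unique minimal normal subgroup of $G$, every non-trivial normal subgroup contains $N$, so $N\leq M$ and $N\leq P$. (I would also record that the hypotheses force $Z(G)=\{1_G\}$: a non-trivial $Z(G)$ would contain $N$, making the non-abelian group $N$ central hence abelian, a contradiction; in particular every non-trivial class genuinely has more than one element.) Now combine: from $N\leq M$ and $[M,P]=\{1_G\}$ we obtain $N\leq C_G(P)$, and from $N\leq P$ we get $C_G(P)\leq C_G(N)$, whence $N\leq C_G(N)$. This says precisely that $N$ is abelian, contradicting the hypothesis. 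Therefore $CD$ is non-direct.

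For the ``in particular'' clause I would check that a finite almost simple group $G$ meets the hypotheses. Its socle $S=\mathrm{Soc}(G)$ is non-abelian simple, and $C_G(S)=\{1_G\}$ because an automorphism centralizing every inner automorphism is trivial when $Z(S)=\{1\}$; then any non-trivial normal $K\trianglelefteq G$ satisfies $K\cap S\in\{\{1\},S\}$, with $K\cap S=\{1\}$ forcing $K\leq C_G(S)=\{1_G\}$, so $S\leq K$. Hence $S$ is the unique minimal normal subgroup and is non-abelian, and the first part applies verbatim. I expect the one delicate step to be the assertion $N\leq M$ and $N\leq P$: for finite $G$ it is immediate (every non-trivial normal subgroup contains a minimal normal subgroup, necessarily the unique $N$), and it is exactly here that uniqueness of $N$ is used; once both inclusions are secured, the centralizer bookkeeping collapsing to ``$N$ abelian'' is routine.
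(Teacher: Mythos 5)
Your proposal is correct and takes essentially the same route as the paper's proof: contradiction, Theorem \ref{Th_DirectImpliesCentralizing} to get $\left[ C,D\right] =\left\{ 1_{G}\right\} $, then uniqueness of $N$ forces $N\leq \left\langle C\right\rangle $ and $N\leq \left\langle D\right\rangle $, and mutual centralization of the generated subgroups makes $N$ abelian --- the paper phrases this last step as ``$N\leq \left\langle C\right\rangle \cap \left\langle D\right\rangle $, which is abelian,'' identical in substance to your $N\leq C_{G}\left( N\right) $. Your explicit verification of the almost-simple case via $C_{G}\left( \mathrm{Soc}\left( G\right) \right) =\left\{ 1_{G}\right\} $, and your flagged caveat that $N\leq \left\langle C\right\rangle $ rests on every non-trivial normal subgroup containing $N$, merely spell out steps the paper uses without comment.
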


\begin{proof}
Suppose by contradiction that the product $CD$ is direct. By Theorem \ref%
{Th_DirectImpliesCentralizing}, $C$ and $D$ centralize each other. Now $%
\left\langle C\right\rangle $ and $\left\langle D\right\rangle $ are
non-trivial normal subgroups of $G$, and hence $N\leq \left\langle
C\right\rangle \cap \left\langle D\right\rangle $. But since $C$ and $D$
centralize each other we have that $\left\langle C\right\rangle \cap
\left\langle D\right\rangle $ is abelian while\ $N$ is not \ - a
contradiction.
\end{proof}

\begin{remark}
The conclusion of Theorem \ref{Th_CDNonDirect} is false if $G$ has more than
one non-abelian minimal normal subgroup. In fact, let $G$ be a group and let 
$N_{1}$ and $N_{2}$ be two minimal normal subgroups of $G$ which are not
central. Then there are two non-trivial conjugacy classes $C_{1}$ and $C_{2}$
of $G$, such that $C_{1}\subset N_{1}$ and $C_{2}\subset N_{2}$ and it is
easy to see that $C_{1}C_{2}$ is direct. Furthermore, the conclusion of
Theorem \ref{Th_CDNonDirect} need not be true if $G$ has an abelian minimal
normal subgroup which is non-central. Consider, for example, a dihedral
group of order $10$. Then $G$ has a unique minimal normal subgroup $%
\left\langle g\right\rangle $ of order $5$. Set $C_{1}:=\left\{
g,g^{4}\right\} $ and $C_{2}:=\left\{ g^{2},g^{3}\right\} $. These are
conjugacy classes of $G$ and their product is direct.
\end{remark}

\bigskip

\centerline{\textbf{Acknowledgement}} 

\medskip

We are indebted to the anonymous referee for noticing a mistake in a previous version of Theorem \ref{Th_QuasiSimple} and for the many comments which significantly improved the correctness and clarity of our original exposition. We are also grateful
to S. Szab\'{o} for a helpful communication.

\bigskip

\bigskip

\end{document}